\documentclass[11pt]{article} 
\usepackage[utf8]{inputenc}
\usepackage[T1]{fontenc}
\usepackage{tikz}
\usepackage[a4paper]{geometry} 
\usepackage{microtype} 
\usepackage{lmodern}
\usepackage{xcolor}
\usepackage{bigints}
\usepackage{setspace}
\usepackage{amssymb}
\usepackage{amsmath}
\usepackage{amsthm}
\usepackage{mathtools}
\usepackage{graphicx}
\usepackage{subfig}
\usepackage{verbatim}
\usepackage[numbers]{natbib}
\usepackage{bbm, dsfont}
\usepackage{relsize}
\usepackage{enumerate}
\usepackage[inline]{enumitem}   
\usepackage{booktabs}

\usepackage{caption}

\captionsetup{labelsep=newline}

\makeatletter
\newcommand{\inlineitem}[1][]{%
\ifnum\enit@type=\tw@
    {\descriptionlabel{#1}}
  \hspace{\labelsep}%
\else
  \ifnum\enit@type=\z@
       \refstepcounter{\@listctr}\fi
    \quad\@itemlabel\hspace{\labelsep}%
\fi}

\usepackage{MnSymbol}
\usepackage{algpseudocode}
\usepackage{algorithm}
\usepackage{hyperref} 
\usepackage{wasysym}

\newtheorem{theorem}{Theorem}[section]
\newtheorem{corollary}[theorem]{Corollary}
\newtheorem{lemma}[theorem]{Lemma}
\newtheorem{proposition}[theorem]{Proposition}

\theoremstyle{definition}
\newtheorem{definition}[theorem]{Definition}
\newtheorem{example}[theorem]{Example}

\theoremstyle{remark}
\newtheorem{remark}[theorem]{Remark}

\numberwithin{equation}{section}

\newcommand{\GP}{\operatorname{GP}}  
\newcommand{\Gumbel}{\operatorname{Gumbel}}

\newcommand{\expec}{\operatorname{\mathsf{E}}}

\newcommand{\diag}{\operatorname{diag}}
\newcommand{\indicator}{\operatorname{\mathbbm{1}}}
\newcommand{\DA}{\operatorname{DA}}

\newcommand{\GPU}{\GP_{\bU}}

\renewcommand{\ge}{\geqslant}
\renewcommand{\geq}{\geqslant}
\renewcommand{\le}{\leqslant}
\renewcommand{\leq}{\leqslant}

\newcommand{\bQ}{\boldsymbol{Q}}
\newcommand{\bS}{\boldsymbol{S}}
\newcommand{\bt}{\boldsymbol{t}}
\newcommand{\bu}{\boldsymbol{u}}
\newcommand{\bU}{\boldsymbol{U}}
\newcommand{\bV}{\boldsymbol{V}}
\newcommand{\bP}{\boldsymbol{P}}
\newcommand{\bv}{\boldsymbol{v}}
\newcommand{\bw}{\boldsymbol{w}}
\newcommand{\bW}{\boldsymbol{W}}
\newcommand{\bx}{\boldsymbol{x}}
\newcommand{\bX}{\boldsymbol{X}}
\newcommand{\bM}{\boldsymbol{M}}
\newcommand{\by}{\boldsymbol{y}}
\newcommand{\bY}{\boldsymbol{Y}}
\newcommand{\bz}{\boldsymbol{z}}
\newcommand{\bZ}{\boldsymbol{Z}}

\newcommand{\bN}{\boldsymbol{N}}

\newcommand{\bo}[1]{\boldsymbol{#1}} 
\newcommand{\binfty}{\boldsymbol{\infty}}
\newcommand{\bzero}{\boldsymbol{0}}
\newcommand{\bone}{\boldsymbol{1}}
\newcommand{\bgamma}{\boldsymbol{\gamma}}
\newcommand{\bT}{\boldsymbol{T}}
\newcommand{\bsigma}{\boldsymbol{\sigma}}

\newcommand{\boeta}{\bo{\eta}}

\newcommand{\TJ}{\bT_{\mid J}}

\newcommand{\YJ}{\bY_{\mid J}}
\newcommand{\UJ}{\bU_{\mid J}}
\newcommand{\ZJ}{\bZ_{\mid J}}

\newcommand{\UmixA}{\bV}
\newcommand{\UmixlogA}{\bU}

\newcommand{\bxi}{\bo{\xi}}


\newcommand{\Mb}{\mathcal{M}_{b}}

\newcommand{\borel}{\mathcal{B}}
\newcommand{\sphere}{\mathbb{S}} 
\newcommand{\mBI}{I_J}

\newcommand{\EE}{\mathbb{E}}

\newcommand{\reals}{\mathbb{R}}
\newcommand{\Rd}{\reals^d}

\newcommand{\rectangle}{\mathbb{R}_{n,J}^{\eps}}
\newcommand{\halfcone}{\mathbb{C}_{n,J}^{\epsilon}}
\newcommand{\rectangleinfty}{\mathbb{R}_{n,J}^{\eps,\infty}}
\newcommand{\halfconeinfty}{\mathbb{C}_{n,J}^{\epsilon,\infty}}
\newcommand{\EEJ}{\EE_{J}}

\newcommand{\sphereJ}{\sphere_{J}}

\newcommand{\scenario}{\mathcal{R}}

\DeclarePairedDelimiterX{\norma}[1]{\lVert}{\rVert}{#1}
\newcommand{\bnorm}[1]{\| {#1} \|} 
\newcommand{\mass}{\pi_j}
\newcommand{\ka}{k}

\newcommand{\bigar}{R}

\newcommand{\norm}{\|\,\cdot\,\|}

\newcommand{\point}{\,\cdot\,}
\newcommand{\Normal}{\mathcal{N}}
\newcommand{\ee}{\mathrm{e}} 
\newcommand{\pr}{\operatorname{\mathsf{P}}}
\newcommand{\diff}{\, \mathrm{d}} 
\newcommand{\eqd}{\overset{\mathsf{d}}{=}} 
\newcommand{\dto}{\rightsquigarrow}

\newcommand{\eps}{\epsilon}
\newcommand{\law}{\mathcal{L}}

\newcommand{\Rtwo}{R_{\geq 2}}
\newcommand{\Rone}{R_{= 1}}
\newcommand{\cbr}[1]{\left\{ {#1} \right\}} 
\newcommand{\rbr}[1]{\left( {#1} \right)}   
\newcommand{\sbr}[1]{\left[ {#1} \right]}   

\newcommand{\HR}{Hüsler--Reiss}
\newcommand{\FR}{Fr\'echet}
\newcommand{\extreme}{extreme direction}
\newcommand{\extremes}{extreme directions}
\newcommand{\wrto}{w.r.t.\ }

\definecolor{navyblue}{rgb}{0.0, 0.0, 0.5}
\definecolor{darkcerulean}{rgb}{0.03, 0.27, 0.49}
\definecolor{burntorange}{rgb}{0.8, 0.33, 0.0}
\definecolor{britishracinggreen}{rgb}{0.0, 0.26, 0.15}
\definecolor{alizarin}{rgb}{0.82, 0.1, 0.26}
\newcommand{\js}[1]{}

\newcommand{\jsch}[1]{\textcolor{black}{#1}}
\newcommand{\amch}[1]{\textcolor{black}{#1}}
\newcommand{\ak}[1]{}


\definecolor{mediumpersianblue}{rgb}{0.0, 0.53, 0.85}
\newcommand{\amtwo}[1]{#1}




\title{Multivariate generalized Pareto distributions along extreme directions}

\author{Anas Mourahib\thanks{UCLouvain, LIDAM/ISBA, anas.mourahib@uclouvain.be. Corresponding author. Supported by the \emph{Fonds de la Recherche Scientifique – FNRS} (grant number T.0203.21).} \and Anna Kiriliouk\thanks{UNamur, NaXys, anna.kiriliouk@unamur.be.} \and Johan Segers\thanks{UCLouvain, LIDAM/ISBA, johan.segers@uclouvain.be.}}
\date{\today}

\begin{document}

\maketitle

\begin{abstract}
When modeling a vector of risk variables, extreme scenarios are often of special interest. The peaks-over-thresholds method hinges on the notion that, asymptotically, the excesses over a vector of high thresholds follow a multivariate generalized Pareto distribution. However, existing literature has primarily concentrated on the setting when all risk variables are always large simultaneously. In reality, this assumption is often not met, especially in high dimensions.

In response to this limitation, we study scenarios where distinct groups of risk variables may exhibit joint extremes while others do not. These discernible groups are derived from the angular measure inherent in the corresponding max-stable distribution, whence the term \emph{extreme direction}. We explore such extreme directions within the framework of multivariate generalized Pareto distributions, with a focus on their probability density functions in relation to an appropriate dominating measure.

Furthermore, we provide a stochastic construction that allows any prespecified set of risk groups to constitute the distribution's extreme directions. This construction takes the form of a smoothed max-linear model and accommodates the full spectrum of conceivable max-stable dependence structures. Additionally, we introduce a generic simulation algorithm tailored for multivariate generalized Pareto distributions, offering specific implementations for extensions of the logistic and Hüsler–Reiss families capable of carrying arbitrary extreme directions.
\end{abstract}


 
\section{Introduction}
The statistical description of extremes is a long-standing subject of research, with the ultimate goal of extrapolating beyond the range of the data and estimating probabilities of events more extreme than those observed in the past. The foundations of univariate extreme value theory were established many decades ago through the derivation of the asymptotic distributions of suitably normalized block maxima and 
excesses over high thresholds.
Modeling multivariate extreme events is a bigger challenge because, contrary to the univariate setting, the class of multivariate extreme-value distributions is infinite-dimensional. Over the last thirty years, numerous parametric extreme-value models have been proposed with the goal of capturing the tail dependence structure of a random $d$-dimensional vector $\bX = (X_1,\ldots,X_d)$; see, e.g., \citet[Chapter 9]{Be04} for an early overview or \citet{cooley2010pairwise}, \citet{ballani2011construction} and \citet{opitz2013extremal} for more recent examples. However, many such models are limited to the setting where all components of $\bX$ can become large simultaneously. While this may be sufficient when the dimension $d$ of $\bX$ is low, more often than not, one encounters the setting where some variables 
may be large while at the same time, the others are of smaller order. 
In finance, for example, characterizing groups of stocks within a portfolio that have the potential of suffering major losses simultaneously is of great importance. Managing flood risk requires identifying and modeling groups of gauging stations where extreme water discharges may occur simultaneously. 

Groups of variables that may become large without the others being so---in an asymptotic sense to be made precise below---will be called \textit{extreme directions}. More specifically, if $J \subseteq \{1,\ldots,d\}$ is such that the components $(X_j, j \in J)$ can be large simultaneously while the other components $(X_j, j \in \{1,\ldots,d\} \setminus J)$ are small, then $J$ defines an extreme direction. The terminology is in line with a characterization of the support of the angular measure of the multivariate extreme-value distribution to which $\bX$ is attracted. In the literature, such groups are also referred to as extreme clusters \citep{chiapino2020}, extreme features \citep{jalalzai2021feature}, or concomitant extremes \citep{fomichov2023spherical}.
Many parametric extreme-value families are limited to the single extreme direction $J = \{1,\ldots,d\}$. The overarching goal of this paper is to explore the case when other extreme directions than $\{1,\ldots,d\}$ exist, and this in terms of several well-known objects in extreme-value theory, and multivariate generalized Pareto (mgp) distributions in particular.

In the literature, tail dependence is described mathematically through various concepts, such as the exponent measure, the angular measure, and the stable tail dependence function, among others. When the focus is on modeling multivariate threshold exceedances, the mgp distribution is a natural candidate \citep{TR06}. In the univariate case, peaks-over-threshold modeling with the generalized Pareto distribution is well-established \citep{davison1990models}, while in the multivariate set-up, statistical modeling is quite recent \citep{R18nr2,A16,ho2019simple,engelke2020graphical,hentschel2022statistical}.
\amch{Asymptotically, the class of mgp distributions represents proper multivariate probability distributions on a region where at least one variable is large. Moreover, parametric mgp models can be constructed quite easily via the \emph{generators} proposed in \citet{A16}.}

Current theory for mgp distributions does not cover extreme directions other than $J = \{1,\ldots,d\}$, a gap we aim to fill. From their generators, we derive expressions for mgp densities with respect to an appropriate dominating measure. 
\amch{Furthermore, we propose a stochastic construction based on smoothed max-linear models, wherein the strong dependence within factors inherent in max-linear models is substituted with a more lenient dependency framework. Our construction is designed to accommodate all mgp models, regardless of their extreme directions.}

Learning extreme directions from data has become an active area of research. 
\citet{Go17, goix2017JMVA} focus on the identification of extreme directions, providing a sparse representation of the support of the angular measure. 
\citet{AM16} and \citet{chiapino2019identifying} study an alternative algorithm with better performance when the number of extreme directions is high. \citet{jalalzai2021feature} introduce an approach based on empirical risk minimization. \citet{chiapino2020} propose a Dirichlet mixture model for the angular measure of a heavy-tailed distribution whose support has been identified and apply it to anomaly detection. \citet{simpson2020determining} exploit the concept of hidden regular variation to identify extreme directions. Finally, \citet{meyer2021sparse,meyer2020multivariate} frame extreme directions within what they call sparse regular variation. 

The study of extreme directions is related to that of clustering and dimension reduction methods for multivariate extremes.
For example, \citet{chautru2015dimension} and \citet{janssen2020k} identify extreme directions via spherical $k$-means clustering of the angles $\bX/ \|\bX\|$ provided $\| \bX \|$ is large. \citet{fomichov2023spherical} provide theoretical results in favor of this approach and extend it to a spherical $k$-principal components algorithm. Clustering angles is also considered in \citet{medina2021}, based on a random $k$-nearest neighbors graph. Finally, \citet{cooley2019decompositions} and \citet{drees2021principal} propose to perform principal component analysis of the angles $\bX/ \| \bX \|$. These and other methods are surveyed in \citet{engelke2021sparse}.

\amch{Our contribution differs from those above from several perspectives. First, our focus is on extreme directions in the context of multivariate threshold exceedances. Second, our aim is not to learn extreme directions, but rather to introduce parametric mgp models capable of generating extreme directions other than $\cbr{1,\ldots,d}$. For multivariate extreme-value distributions, the max-linear factor model \citep{Ei12} and the asymmetric logistic model \citep{T1990} already allow for this possibility. In the recent literature, only \citet{simpson2020determining} and \citet{chiapino2020} propose mixture models for multivariate extreme-value distributions. The power of our model construction resides in the fact that it is capable of representing mgp distributions with general extreme directions. Moreover, for specific parametric families, our model is straightforward to simulate from.}

The paper is organized as follows. Section~\ref{sec:background} recalls essential background on multivariate extreme-value theory. Section~\ref{sec:Directions} defines and characterizes extreme directions in terms of the angular measure, the exponent measure, and the mgp distribution. We also compute the density of an mgp distribution with arbitrary extreme directions in terms of its generators, extending results in \citet{A16}. A model construction tool based on a mixture of mgp generators is presented in Section~\ref{sec:sparsemgpds}. Apart from enjoying a convenient interpretation as a smoothed max-linear model, the construction is able to reproduce any mgp distribution. For the sake of illustration, we work out two examples based on the logistic and the \HR{} families.
Moreover, the construction underlies a simulation algorithm for mgp distributions with non-standard extreme directions (Section~\ref{sec:simulation}). As a by-product, 
\amtwo{we revisit known simulation algorithms in \citet{dombry2016exact} and \citet{engelke2020graphical} that can be used to simulate mgp random vectors}. Section~\ref{sec:conclusion} brings the paper to a close by outlining several avenues for future research. All proofs are deferred to the appendices.

\section{Background}
\label{sec:background}

\paragraph{Notation.}  
\amch{Let $d$ be a positive integer and write $D = \cbr{1,\ldots,d}$. Throughout, bold symbols will refer to multivariate quantities. Let $\bzero$ and $\bone$ denote vectors of zeroes and ones, respectively, of a dimension clear from the context. A $d$-dimensional vector $\bo{x}$ will be written as $\bx=(x_1,\ldots,x_d)$ and for a non-empty set $J \subseteq \cbr{1,\ldots,d}$,  write $\bx_J=(x_j)_{j \in J}$. Mathematical operations on vectors such as addition, multiplication and comparison are considered component-wise apart from $\bo{a} \nleqslant \boldsymbol{b}$ which denotes $a_j>b_{j}$ for at least one $j \in D$.}

We also fix notation for some sets that will be used throughout the paper. For two vectors $\bo{a}, \; \bo{b}$, the set $[\bo{a},\bo{b}]$ will denote the Cartesian product $\prod_{j=1}^{d}[a_j,b_j]$ and so forth for $(\bo{a},\bo{b}), \; [\bo{a},\bo{b}), \; (\bo{a},\bo{b}]$. Write $\EE = [0,\infty)^d\setminus \cbr{\bzero}$. Let $\borel(F)$ denote the Borel sets over a topological space $F$. For a general norm $\|\,\cdot\,\|$ on $\Rd$, let $\mathbb{S}=\cbr{\bx \in  \EE : \; \|\bx\|=1}$ denote the intersection of the unit sphere with the positive orthant. For a non-empty set $J \subseteq D$, let 
\begin{equation}
\label{eq:AJ}
	\mathbb{A}_{J}:=\cbr{ \bx \in [-\infty,\infty)^d: \; x_j>-\infty \text{ iff } j \in J },
\end{equation}
and let $\pi_{J} : \mathbb{A}_J \to \reals^{J}$ be the projection $\bx \mapsto \bx_{J}$ \amtwo{and with $\reals^J$ the set of real vectors $(x_j)_{j \in J}$ indexed by $J$. Clearly, $\reals^J$ is isomorphic to $\reals^{|J|}$, with $\lvert J \rvert$ the number of elements of $J$, but it will be convenient to keep track of the index set explicitly, not only its cardinality.}

The arrow $\dto$ denotes convergence in distribution (weak convergence). If $\mu$ is a measure, then a Borel set $B$ is $\mu$-continuous if $\mu(\partial B)=0$, with $\partial B$ the topological boundary of $B$. The distribution or law of a random vector $\bX$ is denoted by $\law(\bX)$. Finally, the Lebesgue measure on $\reals^m$ is denoted by $\lambda_m$. 
\ak{At the end: check if all references are still up to date and check if all numbered equations are actually referred to (if not, remove their label).}

\subsection{Multivariate extreme value distributions} \label{09:12:15:48}

Consider a random vector $\bxi$ on $\Rd$ with joint cumulative distribution function (cdf) $F$ and margins $F_1,\ldots,F_d$. We say that $F$ is in the domain of attraction of a multivariate extreme-value (mev) distribution $G$ and we write $F \in \DA(G)$ if there exist sequences $\boldsymbol{a}_{n} \in (0,\infty)^d$ and $\boldsymbol{b}_{n} \in \Rd$ such that
\[
    \lim_{n \to \infty} F^{n}(\boldsymbol{a}_{n}\bx+\boldsymbol{b}_{n}) = G(\bx), 
\]
for every point $\bx \in \reals^d$ where $G$ is continuous. Equivalently, if $\bxi_{1},\ldots,\bxi_{n}$ are i.i.d.\ random vectors with common cdf $F$, then 
\begin{equation}
\label{eq:MDA}
    \dfrac{\max_{i=1,\ldots,n}\bxi_{i}- \boldsymbol{b}_{n}}{\boldsymbol{a}_{n}} \dto G, \qquad n \to \infty.
\end{equation}
The margins of an mev distribution are univariate extreme-value distributions themselves. Thus, for $j \in D$, the marginal distribution $G_j$ belongs to a three-parameter family and is
\begin{equation*}
G_j(x_j) =
\begin{dcases}
 \exp\sbr{-\cbr{1+\gamma_j\rbr{x_j-\mu_j}/\alpha_j }^{-1/\gamma_j}},& \text{if } \gamma_j \neq 0,\\
\exp\sbr{-\exp\cbr{-\rbr{x_j-\mu_j} / \alpha_j}}, & \text{if } \gamma_j=0,
\end{dcases}
\end{equation*}
for some shape parameter $\gamma_j \in \reals$, location parameter $\mu_j \in \reals$, and scale parameter $\alpha_j>0$; further, $x_j$ needs to be such that $1+\gamma_j(x_j-\mu_j)/\alpha_j >0$. 


\jsch{For $d\geq 2$, the family of $d$-dimensional mev distributions no longer constitutes a finite-dimensional parametric family, in contrast to the univariate case. This is due to the nature of the dependence structure between the components of $G$, or equivalently, the extremal dependence between the components of $\bxi$.} 
Many functions and measures that describe this dependence are discussed in the literature \citep[see, e.g.,][Chapter~8]{Be04}. \jsch{In this paper, three such objects will appear on the forefront: the exponent measure, the angular measure \citep{de1977limit} and the stable tail dependence function \citep{dh:1998}.}

\jsch{To concentrate on the dependence structure of $G$, it is convenient to standardize its margins. The choice of common scale is a matter of convention and convenience. In this section, we will opt for the unit-\FR{} distribution, i.e., $\alpha_j=\gamma_j=\mu_j=1$ for all $j \in D$, in line with the theory as originally exposed in \citep{de1977limit}. In Section~\ref{sec:backgroundmgpd}, however, we will choose the Gumbel distribution instead ($\alpha_j = 1$ and $\gamma_j=\mu_j=0$ for all $j \in D$), in order to link up with the additive model formulation for the mgp in \citet{R18}. Switching between different marginal scales is an easy and sometimes illuminating exercise.}

An mev distribution $G$ with general margins admits the representation
  \begin{equation*}
    G(\bx)=G^{*} \rbr{-1/\ln G_1(x_1), \ldots, -1/\ln G_d(x_d)},  
    \label{equa:mevdstandard} 
  \end{equation*}  
where $G^{*}$ is an mev distribution with unit-\FR~margins. For $\bxi_i$ as in \eqref{eq:MDA}, if $\bX_i=\rbr{X_{i1},\ldots,X_{id}}$ is defined by
\amch{
\begin{equation}
    X_{ij}=-1/\ln F_j\rbr{\xi_{ij}},
    \qquad i \in \mathbb{N}, \; j \in D,
    \label{equa:standardization}
\end{equation}
}
then we have
$\max_{i=1,\ldots,n}\bX_{i}/n \dto G^{*}$ as $n \to \infty$.



The \emph{exponent measure} of $G^{*}$ is the unique Borel measure $\mu$ concentrated on $\EE$ such that 
\begin{equation}
    G^{*}(\bx)= \exp\cbr{-\mu\rbr{\EE \setminus \left[\bzero,\bx\right]}}, 
    \qquad  \bx \in [\bzero, \binfty].
    \label{equa:exponentmeasure}
\end{equation}
The measure $\mu$ is finite on Borel sets whose topological closure does not contain the origin, i.e., is bounded away from the origin. Moreover, $\mu\rbr{\EE \setminus \left[\bzero,\bx\right]} = \infty$ as soon as $x_j=0$ for some $j \in D$. 
The exponent measure $\mu$ \amch{is $(-1)$-homogeneous} in the sense that $\mu(c \point)=c^{-1} \mu(\point)$ for all $c>0$. Moreover, for any $\mu$-continuous Borel set $B \subset \EE$ bounded away from the origin, we have 
\begin{equation}
    \lim_{n \to \infty} n \pr(\bX \in n B) =  \mu(B). \label{09:12:23:49}
\end{equation}

Another way to describe the dependence structure of $G$ is via the \emph{stable tail dependence function (stdf)},
\begin{equation}
     \ell(\bx)
     = \mu\rbr{\EE \setminus [\bzero,\bone/\bx]}
     = - \ln G^{*}(\bone/\bx), \qquad \bx \in [\bzero,\binfty).
\label{equa:defstdf}
\end{equation}
The margins are constrained by $\ell(0,\ldots,0,x_j,0,\ldots,0)=x_j$ for $x_j \ge 0$ and $j \in D$. Moreover, $\ell$ is homogeneous in the sense that for $c \ge 0$ and $\bx \in [\bzero,\binfty)$, we have $\ell(c\bx)=c\ell(\bx)$.

The \emph{angular measure} $\phi$ of $G^{*}$ with respect to a general norm $\|\,\cdot\,\|$ on $\Rd$ is 
\begin{equation} 
    \phi(B) =\mu \rbr{ \cbr{\bx \in \EE: \; \|\bx\|>1, \; \bx /\|\bx\| \in B } },  
    \qquad B \in \borel(\mathbb{S}).
    \label{09:12:23:48}
\end{equation}
Equations~\eqref{09:12:23:49} and \eqref{09:12:23:48} yield
\begin{equation*}
    \lim_{n \to \infty} n\pr(\|\bX\|>n, \; \bX/\|\bX\| \in B)= \phi(B),
    \label{09:26:14:56} 
\end{equation*}
for any $\phi$-continuous Borel set $B \subseteq S$.

\subsection{Multivariate generalized Pareto distributions}
\label{sec:backgroundmgpd}


The peaks-over-thresholds method is grounded in the fact that asymptotically, vectors of component-wise maxima follow an mev distribution if and only if exceedances over a high threshold vector follow an mgp distribution. Recall $\bxi$ and $G$ in \eqref{eq:MDA} and suppose $G(\bzero)>0$. Convergence~\eqref{eq:MDA} is satisfied if and only if there exists a distribution $H$ such that 
\begin{align*}
     \law\left(\dfrac{\bxi-\boldsymbol{b}_{n}}{\boldsymbol{a}_{n}} \vee \bo{\zeta} \; \middle| \; \bxi \nleqslant \boldsymbol{b}_{n}\right)  \dto H,
     \qquad n \to \infty,
\end{align*}
where $\bo{\zeta}$ is the vector of lower endpoints of $G_1,\ldots,G_d$ and $\vee$ denotes the component-wise maximum. 
\jsch{The conditioning event $\bxi \nleqslant \bo{b}_n$ is to be read as $\exists j \in D : \xi_j > b_{nj}$, that is, at least one variable within $\bxi$ exceeds a high threshold.}
We write $H=\GP(G)$. As in \citet{R18}, we consider the standardized case where $G$ has Gumbel margins, i.e., $\gamma_j=\mu_j=0$ and $\alpha_j=1$ for all $j \in D$. The distribution of an mgp random vector $\bY \sim H$ is then determined by the stdf $\ell$. 

Recall the exponent measure $\mu$ of $G^{*}$. By \citet[Proposition 3.1]{TR06}, the law of $\ee^{\bo{Y}}$ can be expressed in terms of $\mu$ on the collection of sets $\mathbb{A}= \cbr{[\bo{0},\by], \; \by \in \left[\bzero,\binfty\right)}$, via 
\[
  \pr[\ee^{\bo{Y}} \leq \by]
    =\dfrac{\mu\rbr{\left[\bo{0},\by\right] \cap \cbr{\bx \geq \bzero: \; \bx \nleqslant \bone }}}{\mu\rbr{\cbr{\bx \geq \bzero: \; \bx \nleqslant \bone } } }.
\]
Since a probability measure on $\left[\bzero,\binfty\right)$ is determined by its values on $\mathbb{A}$, we get 
\begin{equation}
    \pr\sbr{\ee^{\bo{Y}} \in B}
    =\frac{\mu\rbr{B \cap \cbr{\bx \geq \bzero: \; \bx \nleqslant \bone}}}{\mu\rbr{ \cbr{\bx \geq \bzero : \; \bx \nleqslant \bone} } },
    \qquad B \in \borel\rbr{\left[\bzero,\binfty\right)}.
    \label{11:21:01:39}
\end{equation}
In words, the law of $\ee^{\bY}$ is equal to the restriction of $\mu$ to $\left\{\bx \geq \bzero : \; \bx \nleqslant \bone \right\}$, normalized to a probability measure.
Conversely, since $\mu \rbr{ \cbr{\bx \ge \bzero : \; x_j > 1 } } = - \ln G^*_j(1) = 1$ by \eqref{equa:defstdf}, we obtain the normalization constant via $\pr \sbr{Y_j > 0} = 1 / \mu\rbr{ \cbr{\bx \geq \bzero : \; \bx \nleqslant \bone} }$ for all $j \in D$, so that $\mu$ can be reconstructed from $\bY$ by
\begin{equation}
\label{eq:Y2mu}
	\mu\rbr{B \cap \cbr{\bx \geq \bzero: \; \bx \nleqslant \bone}}
	= \pr\sbr{\ee^{\bo{Y}} \in B} / \pr\sbr{Y_j > 0},
	\qquad B \in \borel\rbr{\left[\bzero,\binfty\right)}.
\end{equation}


\citet[Section~4]{R18} propose three stochastic representations for $H$. In view of their importance to the construction device and the simulation algorithm to follow, we review these representations in some detail.

\paragraph{Spectral random vector $\boldsymbol{S}$.}
Let $\boldsymbol{S}$ be a random vector in $[-\infty,0]^d$ such that $\pr[\max(\bS)=0]=1$, $\pr[S_j>-\infty]>0$ for all $j \in D$ and $\expec[\ee^{S_1}]=\ldots=\expec[\ee^{S_d}]$. Then $\bS$ is said to be the \emph{spectral random vector} \amch{\citep[Theorem~6]{R18}} associated to the mgp $H$ if 
\begin{equation}
\ell(\by)=\expec[\max\rbr{\by \ee^{ \boldsymbol{S}} }] /\expec[\ee^{S_1}], \qquad \by \in \left[\bzero,\binfty\right).
    \label{equa:representationS}
\end{equation}
We write $H=\GP_{\bo{S}}(\law(\boldsymbol{S}))$, while $Y \sim H$ admits the representation
    \begin{align}
        \bY =  E+\boldsymbol{S},
        \label{ZtoS}
    \end{align} 
with $E = \max(\bY)$ a unit exponential random variable independent of the spectral random vector $\bS = \bY - \max(\bY)$\jsch{; see \citet[Theorem~7]{R18}}.

\jsch{The distribution of the random vector $\bS$ can thus be obtained directly from the mgp $H$.} Conversely, any random vector $\bS$ with the above properties generates an mgp $H$. \jsch{Still, the constraint $\max(\bS) = 0$ almost surely is not satisfied by common families of distributions, which raises the question how to construct appropriate $\bS$. The next representation resolves this issue, thereby facilitating the construction of an mgp.}

    \paragraph{Generator $\bT$.} Let $\bT$ be a random vector in $\left[-\infty,\infty\right)^d$ such that $\pr[\max(\bT)>-\infty]=1$, $\pr[T_j>- \infty] >0$ for all $j \in D$ and  $\expec[\ee^{T_1 -\max(\bT)}]=\ldots=\expec[\ee^{T_d-\max(\bT)}]$. The random vector $\bo{T}$ is called a \emph{$\bT$-generator} of $H$ \amch{\citep[Proposition~8]{R18}}, if we have the equality in distribution
     \begin{equation}
        \bo{S} \eqd \bT-\max(\bT), 
        \label{eq:STT}
     \end{equation}
where $\bS$ is the spectral random vector associated to $H$. We will write $H=\GP_{\bT}(\law(\bT))$. Equations~\eqref{ZtoS} and~\eqref{eq:STT} yield that if $\bT$ is a $\bT$-generator of $H$ and if $\bY \sim H$, then 
    \begin{equation}
        \bY \eqd  \bT - \max(\bT) +E.
        \label{ZtoT}
    \end{equation}

\jsch{Any random vector $\bT$ with the above properties yields an mgp random vector $\bY$ via \eqref{ZtoT}. In comparison to $\bS$, it has one constraint less to satisfy; the construction in \eqref{eq:STT} guarantees that $\max(\bS)$ vanishes almost surely. On the other hand, the $\bT$-generator of an mgp distribution $H$ is not unique: adding the same random variable to all components of $\bT$ does not change the distribution of $\bT - \max(\bT)$ and produces the same mgp $H$.}

\jsch{A drawback of both representations $\bS$ and $\bT$ is that they are not stable with respect to marginalization. If $\bY \sim H$ is a $d$-dimensional mgp random vector and if $J \subset D$ is non-empty, the conditional distribution of $\bY_J$ given $\max \bY_J > 0$ is a $|J|$-dimensional mgp, say $H_{|J}$ (which is different from $H_J$, the unconditional distribution of $\bY_J$, which is in general not a mgp). However, if $\bS$ is the spectral random vector of $H$, then $\bS_J$ is \emph{not} the spectral random vector of $H_{|J}$. The following representation resolves this issue, at the price of a change of measure.}

\paragraph{Generator $\bU$.} Let $\bU$ be a random vector on $[-\infty, \infty)^d$ such that $\expec[\ee^{U_1}]=\ldots=\expec[\ee^{U_d}]$ and $0<\expec[\ee^{U_1}]<\infty$. The random vector $\bU$ is said to be a \emph{$\bU$-generator} of $H$ \amch{\citep[Proposition~9]{R18}} if
\begin{equation}
	\ell(\by)
	=
	\expec \sbr{\max\rbr{\by \ee^{\bU} }} /\expec\sbr{\ee^{U_1}}, \qquad 
	\by \in \left[\bzero,\binfty\right).   
\label{eq:generator}
\end{equation}
 We write $H=\GP_{\bU}( \law(\bU))$. 
 If $\bU$ is a generator of $H$ and if $c \in \reals$, then $\bU+c$ is also a generator of $H$. This means that without loss of generality, we can and will assume that 
 \begin{equation}
        \expec\sbr{\ee^{U_j}} = 1   , \qquad j \in D.   \label{equa:conditiononU}         
 \end{equation}
\jsch{This constraint still does not make $\bU$ identifiable from $H$: if the random variable $V$ satisfies $\expec\sbr{\ee^V} = 1$ and is independent of $\bU$, then the random vector $(U_1+V,\ldots,U_d+V)$ yields another $\bU$-generator for the same mgp $H$.}

\jsch{If $\bU$ is a generator of $\bY \sim H$, then, for non-empty $J \subset D$, the random vector $\bU_J$ is a generator of $\bY_J \mid \max(\bY_J) > 0$, which thus follows a $|J|$-variate mgp \citep[Proposition~18]{R18}. In Proposition~\ref{propo:extremalfunction} below, we establish a link between $\bU$ and the extremal functions in \citet{dombry2016exact}. Equation~\eqref{eq:generator} represents the stdf $\ell$ as a $D$-norm with generator $\bU$ in the sense of \citet{MF19}. In \citet[Section~7]{A16}, common parametric families of multivariate extreme value models are described in terms of their $\bU$-generator.}

The spectral random vector $\bS$ can be obtained from the generator $\bU$ via  
    \begin{equation}
        \pr[\boldsymbol{S} \in B]= \frac{\expec\sbr{\indicator\cbr{\bU-\max(\bU) \in B} \ee^{\max(\bU)}}}{\expec\sbr{\ee^{\max(\bU)}}},
        \qquad B \in \borel\rbr{[-\binfty, \bzero)}.
        \label{09:29:14:29}
    \end{equation}
Equations~\eqref{eq:STT} and~\eqref{09:29:14:29} link representation $\bo{S}$ to $\bo{T}$ and to $\bo{U}$, respectively. Likewise, a $\bT$-generator can be obtained from the $\bU$-generator via 
\begin{equation}
    \pr[\boldsymbol{T} \in B]= \dfrac{\expec\sbr{\indicator\cbr{\bU \in B} \ee^{\max(\bU)}}}{\expec\sbr{\ee^{\max(\bU)}}},
    \qquad B \in \borel\rbr{[-\binfty,\binfty)}.
    \label{equa:TtoU}
\end{equation}
\jsch{Note that a spectral random vector $\bS$ is also a $\bT$-generator, since $\max(\bS) = 0$ and thus $\GP_{\bT}(\law(\bS)) = \GP_{\bS}(\law(\bS))$. On the other hand, the representation via $\bU$ is of another nature: if $\bV$ is a random vector whose distribution satisfies both the constraints to be a $\bT$-generator and a $\bU$-generator, then the mgp distributions $\GP_{\bT}(\law(\bV))$ and $\GP_{\bU}(\law(\bV))$ are different in general; see \citet[Section~7]{A16} for examples.}


Let $\phi$ be the angular measure of $G^{*}$ with respect to a norm $\norm$ on $\Rd$ \amtwo{as in Equation~\eqref{09:12:23:48}. Let $\bV$ be a random vector on the $\norm$-unit sphere $\sphere$ with distribution $\phi/\phi(\sphere)$.}
The random vector $\bU^{\phi}=\ln(\bV)$ is then a $\bU$-generator of $H$. For non-empty $J \subseteq D$, we have
    \begin{equation*}
    \label{equa:Utoangularmeasure} 
        \pr\sbr{U^{\phi}_j>-\infty \; \text{iff} \; j \in J}
        = \phi\rbr{\cbr{\bw \in \sphere: \; w_j>0 \; \text{iff} \; j \in J}} / \phi(\sphere),
    \end{equation*}
an equality that makes the link to extreme directions of the underlying random vector $\bxi$, as treated next. 

\section{Extreme directions}
\label{sec:Directions}



\amch{Let the random vector $\bX = (X_j)_{j \in D}$ be defined on the basis of the probability integral transform applied to the components of a random vector $\bxi$ as in \eqref{equa:standardization},} and assume the multivariate regular variation condition $n \pr\sbr{\bX \in n \point} \to \mu(\point)$ in the sense of \eqref{09:12:23:49} holds. We are interested in scenarios where all variables $X_j$ for $j$ in some subset $J$ of $D$ are large while the other variables are not; a precise meaning will be given below. Such a group of variables $J$ will be called an extreme direction, defined formally in Section~\ref{subsec:Sparde mevd}. We will provide various characterizations of such extreme directions: by the angular measure, by the asymptotic probability of $\bX$ to belong to certain sets called thickened rectangles and cones, and finally by the mgp distribution. Next, in Section~\ref{sec:sparse mgpd}, we express the conditional probability density function of an mgp random vector given a certain extreme direction in terms of other densities that are easier to compute.

\amch{Upper tail dependence coefficients \citep{coles1999dependence, schlather2002inequalities}
\begin{equation}
\label{eq:chiJ}
	\chi_{J} 
	= \lim_{q \to \infty} q \pr\sbr{\forall j \in J : X_j > q},
	\qquad \emptyset \ne J \subseteq D,
\end{equation}
are well-known summary measures of extremal dependence. However, we would like to emphasize that they are not sufficient to characterize extreme directions. In fact, even though the equality $\chi_{J} = 0$ implies that the variables $X_j$ for $j \in J$ cannot be large simultaneously, the inequality $\chi_{J} > 0$ does not exclude the possibility of extreme directions properly contained in $J$. For instance, for a pair $J = \cbr{j_1, j_2}$, the inequality $\chi_{j_1j_2} > 0$ implies that $X_{j_1}$ and $X_{j_2}$ can be large simultaneously but does not rule out the possibility that $X_{j_1}$ is large while $X_{j_2}$ is not.}

\bgroup
\color{gray}
\egroup

\subsection{Definition and characterizations}
\label{subsec:Sparde mevd}
Recall the exponent measure $\mu$ of $G^{*}$ as defined in \eqref{equa:exponentmeasure} and $\EE=[\bzero,\binfty) \setminus \cbr{\bzero}$. In this section, let $J \subseteq D$ be a fixed non-empty set and $\EE_J:=\cbr{\bx \in \EE: \ x_j>0 \text{ iff } j \in J}$. The collection $\cbr{ \EE_J : \; \emptyset \ne J \subseteq D}$ forms a partition of $\EE$.
The following definition was introduced in \citet{simpson2020determining} in order to identify extreme directions of the random vector $\bX$ defined as in \eqref{equa:standardization}.

\begin{definition}
\label{def:Xdir}
The set $J$ is said to be an extreme direction of $\bX$ or $\mu$ if $\mu(\EEJ)>0$.
\end{definition}

A trivial but useful fact is that $\mu$ has no mass outside $\bigcup_{J \in \mathcal{R}(\mu)} \EE_J$, where $\mathcal{R}(\mu)$ is the collection of extreme directions of $\mu$.
Further, even if a certain set $J \subseteq D$ is not an extreme direction of $\bX$, it is still possible that $J$ is a superset or a subset of an extreme direction. In contrast, if $\chi_J = 0$, then also $\chi_{\bar{J}} = 0$ whenever $\bar{J} \subseteq J$. Remark~\ref{remark:chi} below points out a connection between tail dependence coefficients and extreme directions.

Throughout this section, let $\norm$ be a fixed arbitrary norm on $\Rd$ and let $\phi$ denote the angular measure of $G^{*}$ with respect to this norm. Recall the set $\mathbb{S}=\cbr{\bx \in \EE: \; \|\bx\|=1}$ and let $\mathbb{S}_J:=\cbr{\bw \in \mathbb{S}: \; w_j>0 \text{ iff } j \in J}$. The following proposition gives a characterization of extreme directions in terms of the angular measure $\phi$. 

\begin{proposition}
\label{propo:angularmeasureinterpretation}
   The set $J$ is an \extreme~of $\bX$ if and only if $\phi(\mathbb{S}_J)>0$.
\end{proposition}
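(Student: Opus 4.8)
The plan is to read the equivalence off directly from the radial–angular structure of the exponent measure, using only its $(-1)$-homogeneity (stated above Equation~\eqref{09:12:23:49}) and the defining relation~\eqref{09:12:23:48} between $\mu$ and $\phi$. The sole conceptual ingredient is that the ``direction'' of a point --- the set of indices $j$ with $x_j>0$ --- is invariant under multiplication by a positive scalar. Concretely, for $\bx \in \EE$ one has $\bx \in \EE_J$ if and only if $\bx/\|\bx\| \in \mathbb{S}_J$, since $\|\bx\|>0$ forces $(\bx/\|\bx\|)_j>0 \iff x_j>0$. It follows that the set $A := \EE_J \cap \{\bx \in \EE : \|\bx\|>1\}$ coincides with $\{\bx \in \EE : \|\bx\|>1,\ \bx/\|\bx\| \in \mathbb{S}_J\}$, and hence $\phi(\mathbb{S}_J) = \mu(A)$ by~\eqref{09:12:23:48}.

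The ``if'' direction is then immediate: $A \subseteq \EE_J$ gives $\mu(\EE_J) \ge \mu(A) = \phi(\mathbb{S}_J)$, so $\phi(\mathbb{S}_J)>0$ implies $\mu(\EE_J)>0$. For the converse I would use that $\EE_J$ is a cone, i.e.\ $c\,\EE_J = \EE_J$ for every $c>0$ (again by scale-invariance of the direction). Hence, for each $n \in \mathbb{N}$, $(1/n)A = \{\bx \in \EE_J : \|\bx\|>1/n\}$, and the $(-1)$-homogeneity of $\mu$ yields $\mu\big((1/n)A\big) = n\,\mu(A) = n\,\phi(\mathbb{S}_J)$. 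Since every $\bx \in \EE_J \subseteq \EE$ satisfies $\|\bx\|>0$, we have the increasing union $\EE_J = \bigcup_{n \ge 1} (1/n)A$, so continuity of measure from below gives $\mu(\EE_J) = \lim_{n \to \infty} n\,\phi(\mathbb{S}_J)$. This limit is $0$ when $\phi(\mathbb{S}_J)=0$ and $+\infty$ when $\phi(\mathbb{S}_J)>0$; in particular $\mu(\EE_J)>0$ forces $\phi(\mathbb{S}_J)>0$, which closes the argument (and as a byproduct records the dichotomy $\mu(\EE_J) \in \{0,\infty\}$).

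I do not expect any real obstacle; the only point deserving a word is measurability, namely that $\EE_J$ and $\mathbb{S}_J$ are Borel, which holds because each is the intersection of the open conditions $\{x_j>0:j\in J\}$ with the closed conditions $\{x_j=0:j\notin J\}$. If one prefers to bypass the cone-rescaling step, an equivalent route is to invoke the polar decomposition $\mu(\mathrm{d}r,\mathrm{d}\bw)=r^{-2}\,\mathrm{d}r\,\phi(\mathrm{d}\bw)$ on $(0,\infty)\times\mathbb{S}$ --- a standard consequence of homogeneity together with~\eqref{09:12:23:48} --- and integrate the indicator of the cone $\{\bw\in\mathbb{S}_J\}$, obtaining $\mu(\EE_J)=\phi(\mathbb{S}_J)\int_0^\infty r^{-2}\,\mathrm{d}r$ and the same conclusion.
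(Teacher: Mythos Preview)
Your proof is correct and follows essentially the same route as the paper's: both identify $\{\bx\in\EE_J:\|\bx\|>1\}$ with the set whose $\mu$-mass equals $\phi(\mathbb{S}_J)$, and then use the $(-1)$-homogeneity of $\mu$ together with monotone convergence (continuity from below along the sets $\{\bx\in\EE_J:\|\bx\|>r\}$) to conclude. The paper compresses this into a two-line chain of equivalences, whereas you spell out the two directions separately and record the dichotomy $\mu(\EE_J)\in\{0,\infty\}$ as a byproduct; the underlying argument is the same.
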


The special case of Proposition~\ref{propo:angularmeasureinterpretation} for the $L_\infty$-norm will be used below for the characterization of extreme directions in terms of the mgp distribution.


Let $\eps>0$ and $n \in \mathbb{N}$. We will interpret extreme directions in terms of the asymptotic probability of $\bX$ to be contained in certain thickened rectangles and cones associated with $\norm$, defined as
\begin{align}
\rectangle &= \left\{
        \bx \in \EE : \
        \| \bx \|> n, \; 
      \bx_J/n \in (\eps,\infty)^{J},\;
      \bx_{D \setminus J}/n \in [0,\eps]^{\lvert D \setminus J \rvert}
    \right\}, \label{thickrectangle} \\
\halfcone  &= \left\{
		\bx \in \EE: \;  
		\| \bx\|>n, \; 
		\bx_J / \| \bx\| \in (\eps,\infty)^{J},\; \bx_{D \setminus J} / \| \bx\| \in [0,\eps]^{\lvert D \setminus J \rvert}  
		\right\}, \label{thickcone}
\end{align}
respectively, and illustrated in Figure~\ref{figurethick}. 

\begin{figure}
  \begin{center}
\begin{tikzpicture}
\draw (2,0) node [below]{$n$};
\draw (0,2) node [left]{$n$};
\draw(0,0) -- (2,0);
\draw (5,0.05) node[right] {$\mathbb{R}_{n,\cbr{1}}^{\eps}$};
\draw (0.2,5) node[above] {$\mathbb{R}_{n,\cbr{2}}^{\eps}$};
\draw (0,0) -- (0,2);

\draw [dashed] (0.25,1.75) -- (0.25,0) node[below] {$n\eps$};
\draw [dashed] (1.75,0.25) -- (0,0.25) node[left] {$n\eps$};
\draw  (0.25,1.75) -- (1.75,0.25); 
\draw  (0.25,2) -- (2,2)-- (2,0/25); 
\filldraw[fill=gray!20,gray!20]
(0,2) -- (0,5) -- (0.25,5) -- (0.25,1.75) -- cycle;
\draw [dashed, red] (0,2) -- (0,5) -- (0.25,5) -- (0.25,2)-- (0,2);
\filldraw[fill=gray!20,gray!20]
(2,0) -- (5,0) -- (5,0.25) -- (1.75,0.25) -- cycle;
\draw [dashed, red] (2,0) -- (5,0) -- (5,0.25) -- (2,0.25)-- (2,0);
\draw (1,1) node [above,rotate=-45]{$\|\bx\|_1=n$};
\draw (1.11,1.9) node [above]{$\|\bx\|_{\infty}=n$};
\end{tikzpicture}
\begin{tikzpicture}
\draw (2,0) node [below]{$n$};
\draw (0,2) node [left]{$n$};

\draw (5,0.05) node[right] {$\mathbb{C}_{n,\cbr{1}}^{\eps}$};
\draw (0.2,5) node[above] {$\mathbb{C}_{n,\cbr{2}}^{\eps}$};
\draw (0,0) -- (0,2);
\draw (0,0) -- (2,0);

\filldraw[fill=gray!20,gray!20]
(0,2) -- (0.25,5/3) -- (3/4,5) -- (0,5) -- cycle;
\filldraw[fill=gray!20,gray!20]
(2,0) -- (5/3,0.25) -- (5,3/4) -- (5,0) -- cycle;
\draw [dashed, red] (0,2) -- (0.25,2) -- (5/8,5) -- (0,5)-- (0,2);
\draw [dashed, red] (2,0) -- (2,0.25) -- (5,5/8) -- (5,0)-- (2,0);
\draw  (5/3,0.25) -- (0.25,5/3); 
\draw  (0.25,2) -- (2,2)-- (2,0.25);
\draw (1,0.8) node [above,rotate=-45]{$\|\bx\|_1=n$};
\draw (1.15,1.9) node [above]{$\|\bx\|_{\infty}=n$};
\draw (3.5, 0.5) node [above, rotate=10, ] {\tiny $x_2=\eps(1-\eps)^{-1}x_1$};
\draw (3.5, 0.3) node [red, rotate=10] {\tiny $x_2=\eps x_1$};
\end{tikzpicture}
\end{center}
    \caption{Thickened rectangles as in \eqref{thickrectangle} (left) and thickened cones as in \eqref{thickcone} (right) associated with the $L_{1}$-norm (simple gray region) and $L_{\infty}$-norm (gray region with dashed red line boundary).}
    \label{figurethick}
\end{figure}

The latter sets were introduced in \citet{goix2017JMVA, Go17} for the $L_{\infty}$-norm. For $\eps>0$, the Borel set $\mathbb{R}_{1,J}^{\eps}$ is $\mu$-continuous as defined in Section~\ref{sec:background}. Hence we can make the following link with Proposition~\ref{propo:angularmeasureinterpretation}.

\begin{lemma} 
\label{lemma:halfrectangles}
We have 
\begin{align}
    \phi(\sphereJ)
    &=\lim_{\eps \downarrow 0 } \lim_{n \to \infty} n  \pr\left[\bX \in \rectangle \right].
    \label{equation:halfrectangles}
\end{align} 
\end{lemma}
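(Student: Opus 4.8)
The plan is to express the thickened rectangle as a single dilation of a fixed $\mu$-continuous set, use the regular variation convergence~\eqref{09:12:23:49} to take the inner limit $n \to \infty$, and then take the outer limit $\eps \downarrow 0$ by dominated convergence against the exponent measure $\mu$. First I would observe that $\rectangle = n\,\mathbb{R}_{1,J}^{\eps}$ for every $n \in \mathbb{N}$, because each defining constraint depends only on $\bx/n$: indeed $\|\bx\| > n$ iff $\|\bx/n\| > 1$, and likewise for the coordinatewise constraints on $\bx_J/n$ and $\bx_{D \setminus J}/n$. The set $\mathbb{R}_{1,J}^{\eps}$ is bounded away from $\bzero$ (all its points have norm larger than $1$) and is $\mu$-continuous, as recalled just before the statement: its topological boundary relative to $\EE$ lies in $\cbr{\bx \in \EE : \|\bx\| = 1}$ together with the coordinate hyperplanes $\cbr{x_j = \eps}$, $j \in D$, and each of these sets is $\mu$-null by the $(-1)$-homogeneity of $\mu$ combined with the finiteness of $\mu$ on sets bounded away from the origin. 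Hence~\eqref{09:12:23:49} applies with $B = \mathbb{R}_{1,J}^{\eps}$ and gives $\lim_{n \to \infty} n\,\pr[\bX \in \rectangle] = \mu(\mathbb{R}_{1,J}^{\eps})$ for each fixed $\eps > 0$.

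It then remains to show $\mu(\mathbb{R}_{1,J}^{\eps}) \to \phi(\sphereJ)$ as $\eps \downarrow 0$. I would first recast the target: by~\eqref{09:12:23:48}, $\phi(\sphereJ) = \mu(A_0)$ with $A_0 := \cbr{\bx \in \EEJ : \|\bx\| > 1}$, using that for $\bx \in \EE$ one has $\bx / \|\bx\| \in \sphereJ$ iff $x_j > 0$ exactly for $j \in J$, that is, iff $\bx \in \EEJ$. Next I would check the pointwise limit $\indicator\cbr{\bx \in \mathbb{R}_{1,J}^{\eps}} \to \indicator\cbr{\bx \in A_0}$ as $\eps \downarrow 0$ for every $\bx$ with $\|\bx\| \ne 1$: if $\bx \in \EEJ$ and $\|\bx\| > 1$, then for $\eps$ small enough one has $x_j > \eps$ for $j \in J$ and $x_j = 0 \in [0,\eps]$ for $j \notin J$, so $\bx \in \mathbb{R}_{1,J}^{\eps}$ eventually; if $x_j = 0$ for some $j \in J$, then $\bx \notin \mathbb{R}_{1,J}^{\eps}$ for all $\eps > 0$; and if $x_j > 0$ for some $j \notin J$, then $\bx \notin \mathbb{R}_{1,J}^{\eps}$ as soon as $\eps < x_j$. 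Since $\mu(\cbr{\|\bx\| = 1}) = 0$, this convergence holds $\mu$-a.e., and every $\mathbb{R}_{1,J}^{\eps}$ is contained in $\cbr{\bx \in \EE : \|\bx\| > 1}$, a set of finite $\mu$-measure $\phi(\sphere)$. Dominated convergence then gives $\mu(\mathbb{R}_{1,J}^{\eps}) \to \mu(A_0) = \phi(\sphereJ)$ along every sequence $\eps \downarrow 0$, which is exactly the right-hand side of~\eqref{equation:halfrectangles}.

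The main obstacle is the outer limit. The family $\cbr{\mathbb{R}_{1,J}^{\eps}}_{\eps > 0}$ is \emph{not} monotone in $\eps$---shrinking $\eps$ relaxes the constraint on $\bx_J$ but tightens the one on $\bx_{D \setminus J}$---so one cannot simply invoke continuity of a measure along a monotone sequence and must instead pass the limit through the integral using the pointwise convergence of the indicators and a fixed $\mu$-integrable majorant. The $\mu$-continuity and the boundedness away from the origin of $\mathbb{R}_{1,J}^{\eps}$ needed for the inner limit are, by contrast, comparatively routine.
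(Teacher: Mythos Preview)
Your proof is correct and follows essentially the same route as the paper: first use $\mu$-continuity of $\mathbb{R}_{1,J}^{\eps}$ together with the regular variation relation~\eqref{09:12:23:49} to identify the inner limit as $\mu(\mathbb{R}_{1,J}^{\eps})$, then apply dominated convergence to pass to the outer limit and recognise the result as $\phi(\sphereJ)$. You simply supply more detail than the paper does---in particular the explicit verification of pointwise $\mu$-a.e.\ convergence of the indicators and the observation that the family is not monotone in $\eps$, which justifies invoking dominated rather than monotone convergence.
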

In contrast, a standardization by $\|\bx\|$ rather than $n$ is used to define thickened cones. Thus, for $\eps>0$, the Borel set $\mathbb{C}_{1,J}^{\eps}$ is not necessarily $\mu$-continuous. To obtain a similar limit to \eqref{equation:halfrectangles} for the thickened cones, we should exclude at most countably many values of $\eps > 0$ for which $\mathbb{C}_{1,J}^{\eps}$ is not $\mu$-continuous.

\begin{lemma}
\label{lemma:halfcones}
There exists an at most countable set $N \subset [0, \infty)$ such that 
\begin{align}
\label{equality:halfcones}
    \phi(\sphereJ)
    &=\lim_{\substack{\epsilon \downarrow 0 \\ \epsilon \not\in N} } \lim_{n \to \infty} n  \pr\left[\bX \in \mathbb{C}_{n,J}^{\epsilon}\right].
\end{align}     
\end{lemma}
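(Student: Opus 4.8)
The plan is to mirror the proof of Lemma~\ref{lemma:halfrectangles}, the only new ingredient being the handling of the $\mu$-continuity issue for the thickened cones $\mathbb{C}_{n,J}^{\eps}$. First I would reduce everything to the sphere by exploiting the homogeneity of $\mu$ and the scaling structure of $\mathbb{C}_{n,J}^{\eps}$. Explicitly, since $\mathbb{C}_{n,J}^{\eps} = n\,\mathbb{C}_{1,J}^{\eps}$ (the defining constraints involve only $\|\bx\|/n$ and $\bx/\|\bx\|$), the $(-1)$-homogeneity of $\mu$ gives $\mu(\mathbb{C}_{n,J}^{\eps}) = n^{-1}\mu(\mathbb{C}_{1,J}^{\eps})$, and $\mathbb{C}_{1,J}^{\eps}$ is bounded away from the origin, hence has finite $\mu$-measure. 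So if $\mathbb{C}_{1,J}^{\eps}$ is $\mu$-continuous, then \eqref{09:12:23:49} yields $\lim_{n\to\infty} n\,\pr[\bX \in \mathbb{C}_{n,J}^{\eps}] = \mu(\mathbb{C}_{1,J}^{\eps})$.

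Next I would identify the set $N$ of bad $\eps$'s and show it is at most countable. Writing $\mathbb{C}_{1,J}^{\eps}$ in polar coordinates $(r,\bw) = (\|\bx\|, \bx/\|\bx\|)$, its boundary (within the region $r > 1$) consists of points where $w_j = \eps$ for some $j \in D\setminus J$, or where some $w_j = 0$ for $j\in J$, or $r = 1$. The $r=1$ and $w_j=0$ pieces carry no mass issue in the relevant limit; the genuinely $\eps$-dependent part of the boundary is $\{\bx : \|\bx\|>1,\ w_j = \eps \text{ for some } j \in D\setminus J\}$, a union over $j\in D\setminus J$ of "conical slabs". For fixed $j$, the sets $\{\bw\in\mathbb{S}: w_j = \eps\}$ over distinct $\eps$ are disjoint, and $\mu$ restricted to $\{r>1\}$ is finite, so only countably many $\eps$ can give positive mass to such a slab. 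Taking the (finite) union over $j\in D\setminus J$ keeps $N$ countable. For $\eps \notin N$, $\mathbb{C}_{1,J}^{\eps}$ is $\mu$-continuous and the limit interchange of the previous paragraph applies.

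Finally I would take $\eps \downarrow 0$ along $\eps \notin N$. As $\eps \downarrow 0$, the sets $\mathbb{C}_{1,J}^{\eps}$ decrease to $\{\bx : \|\bx\|>1,\ x_j>0 \ \forall j\in J,\ x_j=0\ \forall j\in D\setminus J\} = \{\bx\in\EE: \|\bx\|>1,\ \bx/\|\bx\|\in\mathbb{S}_J\}$, whose $\mu$-measure is exactly $\phi(\mathbb{S}_J)$ by the definition \eqref{09:12:23:48} of the angular measure. By continuity of the finite measure $\mu(\cdot \cap \{r>1\})$ along a decreasing sequence, $\lim_{\eps\downarrow 0,\,\eps\notin N}\mu(\mathbb{C}_{1,J}^{\eps}) = \phi(\mathbb{S}_J)$, which combined with the inner limit gives \eqref{equality:halfcones}. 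The main obstacle is the bookkeeping in the second step: carefully describing the $\eps$-dependent portion of $\partial\,\mathbb{C}_{1,J}^{\eps}$ and arguing the countability of bad levels — in particular making sure the parts of the boundary that are \emph{not} $\eps$-dependent (the faces $w_j=0$, $j\in J$, and the truncation $r=1$) do not interfere, which is where the choice to take the double limit (first $n\to\infty$, then $\eps\downarrow 0$) rather than a single limit does the real work.
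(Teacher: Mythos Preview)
Your overall strategy matches the paper's: pass to polar coordinates, identify a countable set $N$ of ``bad'' $\eps$'s on which the angular slice has positive $\phi$-mass, use $\mu$-continuity outside $N$ to get the inner limit via \eqref{09:12:23:49}, and then let $\eps \downarrow 0$. The paper phrases this more compactly by defining $N$ directly as the set of $\eps>0$ for which the boundary of $\{\bw \in \sphere : w_j > \eps \text{ iff } j \in J\}$ in $\sphere$ has positive $\phi$-mass, and invoking finiteness of $\phi$.

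Two concrete slips in your write-up, however, need fixing. First, your description of $\partial\,\mathbb{C}_{1,J}^{\eps}$ is wrong for $j \in J$: the defining condition there is $w_j > \eps$, so the relevant boundary piece is $\{w_j = \eps\}$, not $\{w_j = 0\}$. Thus the $\eps$-dependent part of the boundary is $\bigcup_{j \in D}\{w_j = \eps\}$, with contributions from \emph{all} coordinates, not just those in $D \setminus J$. Your disjointness-of-level-sets argument still gives countability once you include these extra indices, but as written your set $N$ may miss some bad $\eps$'s. Second, the sets $\mathbb{C}_{1,J}^{\eps}$ are \emph{not} monotone in $\eps$: shrinking $\eps$ relaxes the constraint $w_j > \eps$ for $j \in J$ while tightening $w_j \le \eps$ for $j \in D \setminus J$. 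So you cannot appeal to continuity along a decreasing family. Use dominated convergence instead (the indicators converge pointwise to $\indicator_{\sphere_J}$ and are bounded by $1$, with $\phi$ finite), exactly as the paper does by referring back to the proof of Lemma~\ref{lemma:halfrectangles}.
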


Since the sets $\mathbb{C}_{n,J}^{\epsilon}$ depend in a monotone way on $\epsilon$, the formula~\eqref{equality:halfcones} also holds without excluding any values of $\epsilon$, provided the limit over $n$ is replaced by a limsup. 

Proposition~\ref{propo:angularmeasureinterpretation} and Lemmas~\ref{lemma:halfrectangles} and~\ref{lemma:halfcones} imply that the set $J$ is an extreme direction of $\bX$ if and only if the equivalent conditions 3.\ and 4.\ in Theorem~\ref{Alllinked} are satisfied.

Finally, we characterize extreme directions in terms of the mgp distribution and its representations. Let $\tilde{G} 
(\bx)=G^{*}\rbr{\exp\rbr{x_1},\ldots,\exp\rbr{x_d}}$ for $\bx=(x_1,\ldots,x_d)$, be the Gumbel standardization of $G^{*}$ and consider the associated mgp distribution $H=\GP(\tilde{G})$ as in Section~\ref{sec:backgroundmgpd}. Recall representations $\bS$, $\bT$ and $\bU$ of $H$ in Eqs.~\eqref{equa:representationS}--\eqref{eq:generator}. Let $\sphere_{\infty} = \cbr{\bx \ge \bzero : \ \| \bx \|_\infty = 1}$ denote the positive $L_{\infty}$-sphere and $ \sphere_{J,\infty}:=\cbr{\bw \in \sphere_{\infty}: \ w_j>0 \text{ iff } j \in J } $ for non-empty $J \subseteq D$. Let $\bY \sim H$. Applying Eq.~\eqref{11:21:01:39} to the set $\EE_J$ gives
\[
	\pr\sbr{Y_j>-\infty \text{ iff } j \in J}
	= \frac{\phi_{\infty} \rbr{ \sphere_{J,\infty} } }{ \phi_{\infty} (\sphere_{\infty})},
\]
where $\phi_{\infty}$ is the angular measure of $G^{*}$ with respect to the $L_{\infty}$-norm. Proposition~\ref{propo:angularmeasureinterpretation} applied to the $L_\infty$-norm yields that $J$ is an extreme direction  of $\bX$ if and only if $\pr \sbr{Y_j>-\infty \text{ iff } j \in J} > 0$. The following lemma expresses this probability in terms of representations $\bS$, $\bT$ and $\bU$.

\begin{lemma}
\label{linkbetweenrepresntations}
For non-empty $J \subseteq D$, we have 
\begin{align}
	\pr \sbr{Y_j> -\infty \text{ iff } j \in J}
	&=\pr\sbr{S_j> -\infty \text{ iff } j \in J} \label{YtoS}\\
	&=\pr\sbr{T_j> -\infty \text{ iff } j \in J} \label{YtoT}\\
	&=\expec\sbr{\ee^{\max(\bU_J)} \indicator \cbr{U_j> -\infty \text{ iff } j \in J}}/\expec[\ee^{\max\bU} ]. \label{YtoU}  
\end{align}
\end{lemma}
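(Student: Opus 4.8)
The plan is to chain together the three stochastic representations of $H$ from Section~\ref{sec:backgroundmgpd}, using repeatedly the elementary observation that adding an almost surely \emph{finite} random variable to each coordinate of a random vector with values in $[-\infty,\infty)^d$ does not alter the set of coordinates that equal $-\infty$. For $\bx \in [-\infty,\infty)^d$ put $J(\bx) := \cbr{j \in D : x_j > -\infty}$, so that the left-hand sides of \eqref{YtoS} and \eqref{YtoT} read $\pr[J(\bY) = J]$ and $\pr[J(\bS) = J]$, and the set $B_J := \cbr{\bx : J(\bx) = J}$ is Borel in each of the relevant state spaces. First I would prove \eqref{YtoS}: by \eqref{ZtoS}, $\bY \eqd E + \bS$ with $E$ a unit exponential variable, hence $Y_j \eqd E + S_j$ coordinatewise with $0 < E < \infty$ almost surely, so $Y_j > -\infty$ if and only if $S_j > -\infty$, jointly in $j$; therefore $J(\bY) \eqd J(\bS)$ as random subsets of $D$ and \eqref{YtoS} follows.

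For \eqref{YtoT}, recall from \eqref{eq:STT} that $\bS \eqd \bT - \max(\bT)$. Since $\bT$ takes values in $[-\infty,\infty)^d$ we have $\max(\bT) < \infty$, and the $\bT$-generator conditions include $\pr[\max(\bT) > -\infty] = 1$; hence $\max(\bT)$ is almost surely a finite real number and $T_j - \max(\bT) > -\infty$ if and only if $T_j > -\infty$. Thus $J(\bS) \eqd J(\bT)$, which combined with \eqref{YtoS} gives \eqref{YtoT}.

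It remains to prove \eqref{YtoU}. Building on \eqref{YtoT}, I would apply the change of measure \eqref{equa:TtoU}, which expresses the law of a $\bT$-generator through the $\bU$-generator, to the Borel set $B_J \subseteq [-\binfty,\binfty)$:
\[
	\pr[J(\bY) = J] = \pr[\bT \in B_J]
	= \frac{\expec\bigl[\indicator\cbr{\bU \in B_J}\,\ee^{\max(\bU)}\bigr]}{\expec\bigl[\ee^{\max(\bU)}\bigr]}.
\]
By construction $\indicator\cbr{\bU \in B_J} = \indicator\cbr{U_j > -\infty \text{ iff } j \in J}$, and on this event every $U_j$ with $j \notin J$ equals $-\infty$ whereas every $U_j$ with $j \in J$ is a finite real (because $\bU \in [-\infty,\infty)^d$ and $J \ne \emptyset$), so $\max(\bU) = \max(\bU_J)$ there. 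Replacing $\ee^{\max(\bU)}$ by $\ee^{\max(\bU_J)}$ inside the numerator turns the right-hand side of the display into the right-hand side of \eqref{YtoU}, as required.

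I do not anticipate a genuine obstacle: the statement is essentially bookkeeping with the representations of Section~\ref{sec:backgroundmgpd}. The only points calling for care are the extended-real arithmetic behind the equivalences ``$Y_j > -\infty \Leftrightarrow S_j > -\infty$'', ``$S_j > -\infty \Leftrightarrow T_j > -\infty$'' and the simplification $\max(\bU) = \max(\bU_J)$ on $\cbr{\bU \in B_J}$, together with the fact---guaranteed precisely by the $\bT$-generator conditions---that the shift $\max(\bT)$ relating $\bS$ and $\bT$ is almost surely finite. One could equally well derive \eqref{YtoU} straight from \eqref{YtoS} using the change of measure \eqref{09:29:14:29} linking $\bS$ and $\bU$, followed by the same simplification.
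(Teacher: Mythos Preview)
Your proof is correct and follows essentially the same route as the paper: use $\bY = E + \bS$ for \eqref{YtoS}, then $\bS \eqd \bT - \max(\bT)$ with $\max(\bT)$ almost surely finite for \eqref{YtoT}, and finally the change of measure \eqref{equa:TtoU} for \eqref{YtoU}. If anything, you are slightly more explicit than the paper in spelling out the simplification $\max(\bU) = \max(\bU_J)$ on the event $\cbr{U_j > -\infty \text{ iff } j \in J}$, which is exactly what is needed to match the form of \eqref{YtoU}.
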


We finally get the following theorem which summarizes this section.

\begin{theorem} 
\label{Alllinked}
For non-empty $J \subseteq D$, the following statements are equivalent:
\begin{enumerate}
    \item  The set $J$ is an \extreme.
    \inlineitem $\phi(\sphereJ)>0$.  \label{item:angularmeasure} 
  \item  $\displaystyle\lim_{\eps \downarrow 0 } \lim_{n \to \infty} n  \pr\sbr{\bX \in \mathbb{R}_{n,J}^{\epsilon}}>0$. \label{item:rectangles}
    \inlineitem $\displaystyle\lim_{\eps \downarrow 0} \limsup_{n \to \infty} n  \pr\sbr{\bX \in \mathbb{C}_{n,J}^{\epsilon}} > 0$. \label{item:halfcones}
    \item $\pr[Y_j>-\infty \text{ iff } j \in J]>0$.
    \inlineitem $\pr[U_j>-\infty \text{ iff } j \in J]>0$. \label{item:U}
    \item $\pr[T_j>-\infty \text{ iff } j \in J]>0$.
    \inlineitem $\pr[S_j>-\infty \text{ iff } j \in J]>0$.
\end{enumerate}
\end{theorem}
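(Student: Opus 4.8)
The strategy is to assemble the equivalences from the results that precede the statement rather than reprove anything from scratch. The logical backbone is: \emph{(1)}~$\Leftrightarrow$~\ref{item:angularmeasure} is exactly Proposition~\ref{propo:angularmeasureinterpretation}; \ref{item:angularmeasure}~$\Leftrightarrow$~\ref{item:rectangles} follows from Lemma~\ref{lemma:halfrectangles}, since positivity of $\phi(\sphere_J)$ transfers through the displayed identity~\eqref{equation:halfrectangles}; \ref{item:angularmeasure}~$\Leftrightarrow$~\ref{item:halfcones} follows from Lemma~\ref{lemma:halfcones} together with the remark immediately after it that the limsup-version of~\eqref{equality:halfcones} holds for \emph{all} $\epsilon>0$, so no values need to be excluded when we only test positivity. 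This dispatches the ``geometric'' half of the list (statements~1 through~4).

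For the ``mgp-representation'' half (statements~5 through~8), the key observation already recorded in the text is that, applying the fundamental identity~\eqref{11:21:01:39} to the partition cell $\EE_J$, one gets $\pr[Y_j>-\infty \text{ iff } j\in J] = \phi_\infty(\sphere_{J,\infty})/\phi_\infty(\sphere_\infty)$, where $\phi_\infty$ is the angular measure with respect to the $L_\infty$-norm. Since Proposition~\ref{propo:angularmeasureinterpretation} holds for an \emph{arbitrary} norm, in particular for $\norm = \norminfty$, we get that $J$ is an extreme direction iff $\phi_\infty(\sphere_{J,\infty})>0$ iff $\pr[Y_j>-\infty \text{ iff } j\in J]>0$; this is statement~5. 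Then statements~6, 7 and~8 are immediate from Lemma~\ref{linkbetweenrepresntations}: equations~\eqref{YtoS}, \eqref{YtoT} and~\eqref{YtoU} show that the probabilities involving $S$ and $T$ equal the one involving $Y$ exactly, while the expression in~\eqref{YtoU} is positive if and only if $\pr[U_j>-\infty \text{ iff } j\in J]>0$, because $\ee^{\max(\bU_J)}>0$ almost surely on the event $\{U_j>-\infty \text{ iff } j\in J\}$ and the normalizing constant $\expec[\ee^{\max\bU}]$ is finite and positive. Hence all of~5--8 are equivalent to~1.

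Concretely, I would organize the write-up as a short chain: first cite Proposition~\ref{propo:angularmeasureinterpretation} for $1\Leftrightarrow\ref{item:angularmeasure}$; then Lemmas~\ref{lemma:halfrectangles} and~\ref{lemma:halfcones} (with the post-lemma remark) for $\ref{item:angularmeasure}\Leftrightarrow\ref{item:rectangles}$ and $\ref{item:angularmeasure}\Leftrightarrow\ref{item:halfcones}$; then invoke~\eqref{11:21:01:39} applied to $\EE_J$ and Proposition~\ref{propo:angularmeasureinterpretation} for the $L_\infty$-norm to link~1 with statement~5; and finally Lemma~\ref{linkbetweenrepresntations} to link statement~5 with statements~6, 7, 8, taking care only with the $\bU$-case that the weight $\ee^{\max(\bU_J)}$ is strictly positive and integrable so that positivity of the expectation in~\eqref{YtoU} is equivalent to positivity of the probability in~\ref{item:U}.

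The only genuinely delicate point is this last $\bU$-step: one must be sure that $\expec[\ee^{\max\bU}]\in(0,\infty)$ — which is guaranteed by the defining constraint $0<\expec[\ee^{U_1}]<\infty$ and $\max\bU \le \sum_j \ee^{U_j}$ in the exponent, or more simply $\ee^{\max\bU}\le\sum_{j\in D}\ee^{U_j}$, so $\expec[\ee^{\max\bU}]\le d<\infty$ under the normalization~\eqref{equa:conditiononU} — and that on the event $\{U_j>-\infty \text{ iff } j\in J\}$ the integrand $\ee^{\max(\bU_J)}$ is strictly positive, which holds since $J$ is non-empty and $U_j>-\infty$ for $j\in J$ there. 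With these observations the theorem reduces to bookkeeping over the already-established lemmas, so I expect no real obstacle beyond carefully threading the $L_\infty$-norm through Proposition~\ref{propo:angularmeasureinterpretation}.
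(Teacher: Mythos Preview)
Your proposal is correct and matches the paper's approach exactly: the paper treats Theorem~\ref{Alllinked} as a summary statement assembled from Proposition~\ref{propo:angularmeasureinterpretation}, Lemmas~\ref{lemma:halfrectangles}--\ref{lemma:halfcones}, the identity $\pr[Y_j>-\infty \text{ iff } j\in J]=\phi_\infty(\sphere_{J,\infty})/\phi_\infty(\sphere_\infty)$ obtained from~\eqref{11:21:01:39}, and Lemma~\ref{linkbetweenrepresntations}, without a separate formal proof. Your treatment of the $\bU$-case is in fact slightly more explicit than the paper's, which simply asserts in the proof of Lemma~\ref{linkbetweenrepresntations} that the expectation in~\eqref{YtoU} is positive iff $\pr[U_j>-\infty \text{ iff } j\in J]>0$.
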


\begin{example}[Max-linear model]
\label{ex:maxlin}
Let $r$ be a positive integer and write $R = \cbr{1,\ldots,r}$. Let $Z_1,\ldots,Z_r$ be independent unit-\FR{} random variables. Let $A = (a_{jk})_{j \in D;k \in R} \in [0,1]^{d \times r}$ be a coefficient matrix with row sums $\sum_{k=1}^{r} a_{jk}=1$ for all $j \in D$ and column sums $\sum_{j=1}^d a_{jk} > 0$ for all $k \in R$. Consider the random vector
\begin{equation}
    \label{equa:max-line}
    \bM=\left(\max_{\ka \in \bigar}\{a_{1\ka} Z_{k}\}, \ldots, \max_{\ka \in \bigar}\{a_{d\ka} Z_{k}\}\right). 
\end{equation}  
Its distribution is a max-linear model; see, e.g., \citet{wang2011conditional} and \citet{Ei12}. 
\jsch{As in linear factor models, the random variables $Z_1,\ldots,Z_r$ can be thought of as latent factor variables generating the observable random vector.}
The random vector $\bM$ has an mev distribution $G^{*}$ with unit-\FR{} margins. The angular measure of $G^{*}$ with respect to $\norm$ is a discrete measure with mass $h_k=\|\bo{a}_{\point k}\|$ at atom $\bo{w}_{k}= \bo{a}_{\point k} / h_k$, where $\bo{a}_{\point k}$ denotes the $k$-th column of $A$ for $k \in R$. By Proposition~\ref{propo:angularmeasureinterpretation}, the extreme directions of $\bM$ are the sets $J_k=\cbr{j \in D: \; a_{jk}>0 }$ for $k \in R$. \amtwo{The construction at the end of Section~\ref{sec:backgroundmgpd} yields a generator $\bU$ of the mgp distribution associated to \eqref{equa:max-line} with distribution
\begin{equation*}
    \law(\bU) = \frac{1}{d} \sum_{k \in R} \|\bo{a}_{\cdot k}\|_1 \, \delta_{\ln \rbr{\tfrac{\bo{a}_{\cdot k}}{\|\bo{a}_{\cdot k}\|_1}} } \rbr{\cdot},
\end{equation*}
with $\delta_{\bx}(\cdot)$ denoting a unit point mass at $\bx$. Hence, inequality~(6) in Theorem~\ref{Alllinked} is satisfied if and only if $J$ is one of the signatures $J_k, \, k\in R$.}
\end{example}



\begin{remark}
\label{remark:chi}
The tail dependence coefficient $\chi_J$ in \eqref{eq:chiJ} satisfies
\[ 
    \chi_J = \mu \left( \left\{ \bx \in \EE : \; \bx_J \in (1,\infty)^{J}\right\} \right)
    = \sum_{\substack{\bar{J} \subseteq D\\ \bar{J}  \supseteq J }} \mu\left( \left\{ \bx \in \EE: \; \bx_{\bar{J}} \in (1,\infty)^{\lvert \bar{J} \rvert}, \; \bx_{D\setminus \bar{J}} \in [0,1]^{\lvert D \setminus \bar{J} \rvert }  \right\} \right).
\]
We have $\chi_J > 0$ if and only if there exists $\bar{J} \supseteq J$ such that the corresponding term on the right-hand side is positive, and by homogeneity of $\mu$, this is equivalent to $\bar{J}$ being a subset of an extreme direction. We find that $\chi_J > 0$ if and only if $J$ is a subset of an extreme direction. 
\end{remark}

\subsection{Density of an mgp distribution with multiple extreme directions}
\label{sec:sparse mgpd}

We provide density expressions for potential use in statistical inference on mgp models.
As in Section~\ref{sec:backgroundmgpd}, let $\bY$ be an mgp random vector with distribution $H = \GP(G)$ where $G$ is an mev distribution with Gumbel margins. Further, let $\bT$ and $\bU$ be its generators as defined in Section~\ref{sec:background}. With respect to a dominating measure $\upsilon$, defined as a sum of Lebesgue measures $\lambda_m$ of various dimensions $m \in D$, we express the density of $\bY$ in terms of the densities of $\bU$ and $\bT$. This will be helpful since the latter densities are known for several parametric families, for instance, the logistic and the Hüsler--Reiss model \citep[Sections~7.1 and~7.2]{A16}. Furthermore, in view of \eqref{eq:Y2mu}, the density of the exponent measure $\mu$ on $\cbr{\bx \ge \bzero : \bx \nleqslant \bone}$ is proportional to the mgp density, after appropriate marginal scaling; by homogeneity, this determines the exponent measure density everywhere.

Let $\lambda_k$ be the $k$-dimensional Lebesgue measure and let $\upsilon = \upsilon_d$ be the measure on $[-\infty,\infty)^d$ defined by 
\begin{equation}
\label{eq:upsilon}
     \upsilon(\point) := \sum_{J : \emptyset \ne J \subseteq D} 
     \lambda_{\lvert J \rvert} (\pi_{J}(\point \cap \mathbb{A}_{J})),
\end{equation}
where we recall the set $\mathbb{A}_J$ and the projection $\pi_J$ defined in and right after Eq.~\eqref{eq:AJ}. For Borel sets $B$ in $[-\infty,\infty)^d$, we have thus
\[
    \upsilon(B) = \sum_{J : \emptyset \ne J \subseteq D} \lambda_{|J|} \rbr{\cbr{ \bx \in \reals^{|J|} : \; \text{$\exists \, \by \in B$ such that $y_j = x_j$ if $j \in J$ and $y_j = -\infty$ if $j \not\in J$}}}.
\]
\begin{theorem}
\label{theorem:pdfofUwrtmu}
If the generator $\bU$ of $\bY$ has density $f_{\bU}$ with respect to $\upsilon$, then the mgp random vector $\bY$ has density $h_{\bY}$ with respect to $\upsilon$ equal to
\begin{equation*}
    h_{\bY}(\by)=\indicator\cbr{\max(\by) >0} \frac{1}{\expec \sbr{\ee^{\max(\bU)}}} \int_{r \in \reals } f_{\bU}\rbr{r+\by} e^r \diff r.
\end{equation*}
\end{theorem}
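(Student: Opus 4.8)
The plan is to start from the stochastic representation $\bY \eqd \bT - \max(\bT) + E$ from \eqref{ZtoT}, but to work through the $\bU$-generator instead, since its density is what we are given. Recall from \eqref{equa:TtoU} that a $\bT$-generator can be obtained from $\bU$ via the change of measure $\pr[\bT \in B] = \expec[\indicator\{\bU \in B\} \ee^{\max(\bU)}] / \expec[\ee^{\max(\bU)}]$. Combining this with \eqref{ZtoT}, for a bounded measurable test function $g$ on $[-\infty,\infty)^d$ we have
\[
	\expec[g(\bY)]
	= \frac{1}{\expec[\ee^{\max(\bU)}]} \,
	\expec\!\left[ \ee^{\max(\bU)} \int_{0}^{\infty} g\bigl(\bU - \max(\bU) + t\bigr) \, \ee^{-t} \diff t \right],
\]
using that $E$ is a unit exponential independent of $\bU$. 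The strategy is then to substitute $r = t - \max(\bU)$ inside the integral so that $\bU - \max(\bU) + t = \bU + r$, turning the expression into
\[
	\expec[g(\bY)]
	= \frac{1}{\expec[\ee^{\max(\bU)}]} \,
	\expec\!\left[ \int_{-\max(\bU)}^{\infty} g(\bU + r) \, \ee^{r} \diff r \right],
\]
and to recognize that the lower limit $-\max(\bU)$ is exactly the statement that $\max(\bU + r) = \max(\bU) + r > 0$, i.e. $r > -\max(\bU)$; equivalently one can write $\int_{\reals} g(\by + r) \indicator\{\max(\by) + r > 0\} \ee^r \diff r$ with $\by = \bU$. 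Then one brings in the density: $\expec[\psi(\bU)] = \int \psi(\bu) f_{\bU}(\bu) \diff \upsilon(\bu)$ for the appropriate $\psi$, and performs a shift-of-variable $\by = \bu + r$ in the $\upsilon$-integral.

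The crux of the argument — and the step I expect to be the main obstacle — is justifying that the dominating measure $\upsilon$ from \eqref{eq:upsilon} is translation-invariant under the diagonal shift $\bx \mapsto \bx + r$ (for $r \in \reals$), so that after the substitution $\by = \bu + r$ the density $f_{\bU}$ evaluated at $\by - r = \bu$ transforms correctly and the Jacobian is trivial. This holds because adding $r$ to every coordinate maps $\mathbb{A}_J$ onto itself (the set of coordinates equal to $-\infty$ is unchanged), and on each such piece the map is a genuine translation of $\reals^{|J|}$, under which $\lambda_{|J|}$ is invariant; summing over $J$ preserves $\upsilon$. One must also verify a Fubini/Tonelli interchange between the $\upsilon$-integral in $\bu$ (or $\by$) and the Lebesgue integral in $r$, which is licensed by nonnegativity after reducing to indicator test functions $g = \indicator_B$.

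Carrying this out, with $g = \indicator_B$, gives
\[
	\pr[\bY \in B]
	= \frac{1}{\expec[\ee^{\max(\bU)}]}
	\int_{\reals} \int_{[-\infty,\infty)^d} \indicator\{\bu + r \in B\}\, f_{\bU}(\bu)\, \ee^{r} \diff\upsilon(\bu) \diff r,
\]
and after the shift $\by = \bu + r$ in the inner integral and using $\upsilon$-invariance,
\[
	\pr[\bY \in B]
	= \int_{[-\infty,\infty)^d} \indicator\{\by \in B\} \left( \indicator\{\max(\by) > 0\} \frac{1}{\expec[\ee^{\max(\bU)}]} \int_{\reals} f_{\bU}(\by + (r - \by)) \, \ee^{r} \diff r \right) \diff\upsilon(\by),
\]
wait — more cleanly, after substituting so that the argument of $f_{\bU}$ becomes $r + \by$ with $r$ the running shift, one reads off the claimed density
\[
	h_{\bY}(\by) = \indicator\{\max(\by) > 0\} \frac{1}{\expec[\ee^{\max(\bU)}]} \int_{\reals} f_{\bU}(r + \by)\, \ee^{r} \diff r.
\]
Since $B$ was an arbitrary Borel set, this identifies $h_{\bY}$ as the $\upsilon$-density of $\bY$, completing the proof. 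A minor point worth a line is that the indicator $\indicator\{\max(\by) > 0\}$ arises precisely because the inner $r$-integral has effective lower limit $-\max(\by)$; when $\max(\by) \le -\infty$ is impossible under $\upsilon$ but when $\max(\by) \le 0$ the constraint $\max(\by) + r > 0$ still restricts $r$, and in fact for the density to be supported on $\{\max(\by) > 0\}$ one checks the normalization is consistent with $\pr[\max(\bY) > 0] = 1$.
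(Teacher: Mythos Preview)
Your approach is essentially that of the paper (which defers the argument to \citet{R18}): pass from $\bY \eqd \bS + E$ to $\bU$ via the exponential change of measure, substitute in the exponential integration variable, then perform a diagonal shift in the $\upsilon$-integral, using that $\upsilon$ is invariant under $\bx \mapsto \bx + r$---a point you correctly identify and justify. One computational slip to fix: after your substitution $r = t - \max(\bU)$ the factor $\ee^{\max(\bU)}\ee^{-t}$ becomes $\ee^{-r}$, not $\ee^{r}$; the paper instead takes $r = \max(\bu) - z$ (opposite sign), which yields $\ee^{r}$ directly and, after the shift $\bv = \bu - r$, lands on $f_{\bU}(\bv + r)$ without a further sign change in the $r$-integral.
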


\begin{theorem}
\label{theorem:pdfofTwrtmu}
If the generator $\bT$ has density $f_{\bT}$ with respect to $\upsilon$, then the mgp random vector $\bY$ has density $h_{\bY}$ with respect to $\upsilon$ equal to
\begin{equation*}
     h_{\bY}(\boldsymbol{y})
    =  \indicator\cbr{\max(\by)>0} \ee^{-\max(\by)} \int_{r \in \reals} f_{\bT}(r+\boldsymbol{y}) \diff r.
\end{equation*}
\end{theorem}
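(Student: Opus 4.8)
The plan is to derive Theorem~\ref{theorem:pdfofTwrtmu} from Theorem~\ref{theorem:pdfofUwrtmu}, since the two generators are linked by the change-of-measure formula~\eqref{equa:TtoU}. Concretely, I would first translate~\eqref{equa:TtoU} into a statement about densities with respect to the dominating measure $\upsilon$: if $\bT$ has density $f_{\bT}$ with respect to $\upsilon$, then the $\bU$-generator obtained from $\bT$ via the relation $\pr[\bT \in B] = \expec[\indicator\{\bU \in B\}\ee^{\max(\bU)}]/\expec[\ee^{\max(\bU)}]$ (read in the reverse direction) has density
\begin{equation*}
    f_{\bU}(\bu) = c \, \ee^{-\max(\bu)} f_{\bT}(\bu), \qquad c = \expec[\ee^{\max(\bU)}] \in (0,\infty),
\end{equation*}
with respect to $\upsilon$. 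The key point making this legitimate is that the map $\bx \mapsto \max(\bx)$ is well-defined and finite on each stratum $\mathbb{A}_J$ (it equals $\max_{j\in J} x_j$, none of the relevant coordinates being $-\infty$), and multiplication by the fixed positive function $\ee^{-\max(\bu)}$ does not change the dominating measure; one also has to check that $\bU$ so defined satisfies the normalization and integrability constraints of a $\bU$-generator, which follows from those of $\bT$ together with finiteness of $c$.

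Next I would plug this $f_{\bU}$ into the conclusion of Theorem~\ref{theorem:pdfofUwrtmu}. Writing $\by$ with $\max(\by) > 0$, we get
\begin{equation*}
    h_{\bY}(\by)
    = \frac{1}{c} \int_{\reals} f_{\bU}(r + \by)\,\ee^{r}\diff r
    = \frac{1}{c}\int_{\reals} c\,\ee^{-\max(r+\by)} f_{\bT}(r+\by)\,\ee^{r}\diff r
    = \int_{\reals} \ee^{r - \max(r+\by)} f_{\bT}(r+\by)\diff r.
\end{equation*}
Since $\max(r+\by) = r + \max(\by)$ (shifting all coordinates by the same real number $r$ shifts the maximum by $r$; again this is valid because only the finite coordinates matter and they all get shifted), the exponent simplifies to $r - \max(r+\by) = -\max(\by)$, which is independent of $r$ and can be pulled out of the integral. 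This yields $h_{\bY}(\by) = \ee^{-\max(\by)} \int_{\reals} f_{\bT}(r+\by)\diff r$, and reattaching the indicator $\indicator\{\max(\by) > 0\}$ gives exactly the claimed formula.

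An alternative, more self-contained route would be to repeat the proof strategy of Theorem~\ref{theorem:pdfofUwrtmu} directly with $\bT$: start from the stochastic representation $\bY \eqd \bT - \max(\bT) + E$ from~\eqref{ZtoT} with $E$ a unit exponential independent of $\bT$, compute $\pr[\bY \in B]$ for a Borel $B \subseteq [-\infty,\infty)^d$ by integrating over $E$ and over the law of $\bT$, and perform the change of variables $(\bt, e) \mapsto \by = \bt - \max(\bt) + e$ on each stratum $\mathbb{A}_J$. The Jacobian bookkeeping is the delicate part: on $\mathbb{A}_J$ one parametrizes by, say, the $|J|$ coordinates of $\bt_J$ together with $e$, but $\by_J$ has only $|J|$ free coordinates, so one integrates out the one-dimensional redundancy coming from the shift-invariance, which is precisely what produces the single real integral $\int_{\reals} f_{\bT}(r+\by)\diff r$. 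I expect the main obstacle in either approach to be handling the strata with $J \subsetneq D$ carefully — ensuring that $\max$, the shift $\by \mapsto r + \by$, and the projections $\pi_J$ interact correctly with the $-\infty$ coordinates, and that the measure $\upsilon$ is genuinely invariant under the diagonal shift so that the change of variables is clean. The derivation-from-Theorem~\ref{theorem:pdfofUwrtmu} route largely sidesteps this, which is why I would present it first.
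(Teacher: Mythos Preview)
Your primary route—deducing the formula from Theorem~\ref{theorem:pdfofUwrtmu} via the change of measure~\eqref{equa:TtoU}—is elegant, and the algebra (the cancellation of $c$ and the shift identity $\max(r+\by)=r+\max(\by)$ on each stratum) is correct. There is, however, a genuine gap: inverting~\eqref{equa:TtoU} to manufacture a $\bU$-generator with density $c\,\ee^{-\max(\bu)}f_{\bT}(\bu)$ requires the normalising constant $c=1/\expec[\ee^{-\max(\bT)}]$ to be finite. You note that the $\bU$-constraints ``follow from those of $\bT$ together with finiteness of $c$'', but you never verify that $c$ is finite, and in fact the definition of a $\bT$-generator imposes no such integrability: the only requirements are $\pr[\max(\bT)>-\infty]=1$, $\pr[T_j>-\infty]>0$, and equality of the $\expec[\ee^{T_j-\max(\bT)}]$. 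Already in dimension one, any real random variable $T$ is a valid $\bT$-generator, yet $\expec[\ee^{-T}]$ can be infinite. Thus your first argument proves the theorem only under an extra moment assumption that is not part of the hypotheses.

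Your alternative direct route—starting from $\bY\eqd\bT-\max(\bT)+E$ in~\eqref{ZtoT} and computing by change of variables—is exactly what the paper does: the paper gives no self-contained argument but refers to Theorem~13 in \citet{R18}, whose proof is the direct computation you outline. This route works without any extra integrability, so it is the one you should present as the main argument rather than as a fallback. The Jacobian bookkeeping you anticipate is benign once a stratum $\mathbb{A}_J$ is fixed: write $\by_J=\bt_J-\max(\bt_J)+e$ with $e>0$, substitute $r=\max(\bt_J)-e$ and then $\by_J=\bt_J-r$, and observe that the exponential weight $\ee^{-e}$ becomes $\ee^{r-\max(\bt_J)}=\ee^{-\max(\by_J)}$, giving the claimed density.
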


The proofs of Theorems~\ref{theorem:pdfofUwrtmu} and~\ref{theorem:pdfofTwrtmu} are identical to the ones of Theorems~12 and~13 in \citet{R18}. In the next section, we will propose a model for the $\bU$-generator that leads to mgp distributions with general extreme directions and whose density can be computed by means of Theorem~\ref{theorem:pdfofUwrtmu}.

\section{A model with extreme directions}
\label{sec:sparsemgpds}

We will provide a stochastic construction that allows any collection $\mathcal{R}$ of non-empty subsets of $D = \cbr{1,\ldots,d}$ with the property $\bigcup \cbr{J : J \in \mathcal{R}} = D$ to be the extreme directions (Definition~\ref{def:Xdir}) of a random vector. We do so by considering a smoothed version of the max-linear model that we will call mixture model (Section~\ref{sec:definitionandproperties}). We show that this model determines an mev distribution, of which we compute the stdf, the extreme directions, and the $\bU$-generator of the associated mgp distribution. Importantly, the mixture model covers all mev distributions and thus also all mgp distributions. Density expressions will be provided too. Next, in Section~\ref{sec:examples}, we work out two parametric examples for the mixture model based on the logistic and the \HR~dependence structures.

\subsection{Definition and properties}
\label{sec:definitionandproperties}

\begin{definition}
\label{def:mixture}
Let $r$ be a positive integer and write $R=\{1,\ldots,r\}$. Let $\bo{Z}^{(1)},\ldots,\bo{Z}^{(r)}$ be independent random vectors in $\Rd$. 
For all $k \in R$, suppose that $\bo{Z}^{(k)} = (Z_1^{(k)}, \ldots, Z_d^{(k)})$ has an mev distribution with unit-Fr\'echet margins, stdf $\ell^{(k)}$ and a single extreme direction $D$. Let $A = (a_{jk})_{j \in D;k \in R} \in [0,1]^{d \times r}$ be a coefficient matrix with unit row sums $\sum_{k=1}^{r} a_{jk}=1$ for all $j \in D$ and positive column sums $\sum_{j=1}^d a_{jk} > 0$ for all $k \in R$. 
The $d$-variate random vector
\begin{equation}
  \label{mixturemodel}
    \bM 
    = (M_1, \ldots, M_d) 
    = \left(\max_{\ka \in \bigar}\{a_{1\ka} Z_{1}^{(\ka)}\}, \ldots, \max_{\ka \in \bigar}\{a_{d\ka} Z_{d}^{(\ka)}\}\right),   
\end{equation}
will be called the $(d\times r)$ \emph{mixture model} with coefficient matrix $A$ and factors 
$\bZ^{(1)},\ldots,\bZ^{(r)}$.  
\end{definition}

\begin{remark}
\label{rem:ZkJk}
In \eqref{mixturemodel}, for a given $j \in D$, only those $k \in R$ that satisfy $a_{jk} > 0$ matter for the value of the maximum. Therefore, only the subvectors $\bZ^{(k)}_{J_k}$ need to be given, where
\begin{equation}
\label{eq:Jk}
    J_k := \cbr{ j \in D: \; a_{jk} > 0 }, 
\end{equation}
is the $k$-th \emph{signature} of the coefficient matrix $A$. The condition on $\bZ^{(k)}_{J_k}$ is then that its only extreme direction is $J_k$. Note that $J_k$ is non-empty by the assumption that $A$ has positive column sums.
\end{remark}
 
\begin{remark}
\label{remark:smoothedmaxlinear}
   In case $Z_1^{(k)}=\ldots=Z_d^{(k)}$ almost surely for all $k \in R$, the distribution of $\bM$ is max-linear with unit-Fréchet margins \jsch{and we are back to the factor model in Example~\ref{ex:maxlin}}. 
   The mixture model~\eqref{mixturemodel} can thus be seen as a smoothed version of the max-linear factor model in the sense that complete dependence within each of the 
    factors $\bZ^{(1)},\ldots,\bZ^{(r)}$ is relaxed to a possibly weaker dependence structure. 
    \jsch{Under the conditions of Proposition~\ref{propo:densityofmixturemgp} below, the resulting mixture model has a density with respect to the measure $\upsilon$ in Eq.~\eqref{eq:upsilon}. We hope this property will facilitate statistical inference through likelihood-based methods.}
\end{remark}
For $k \in R$, let $\ell_{J_k}^{(k)}$ be the stdf associated with $\bZ^{(k)}_{J_k}$, that is,
\begin{equation}
    \label{equa:k-thstdf}
  \ell_{J_k}^{(k)}(\bx_{J_k})= \ell^{(k)}\rbr{\Tilde{\bx}}, 
  \qquad \bx_{J_k} \in [0, \infty)^{J_k},
\end{equation}
with $\Tilde{\bx}= \sum_{j \in J_k} x_j \bo{e}_j$ and $\bo{e}_j$ the $j$-th canonical unit vector in $\Rd$. If $J_k$ is a singleton, then $\ell_{J_k}^{(k)}$ is the identity function on $[0, \infty)$.
 The following theorem was proved in \citet[Theorem 1]{S03} for the particular case where $J_1,\ldots,J_r$ are 
all different. 

\begin{theorem}
\label{stephanson}
  The random vector $\bM$ in \eqref{mixturemodel} follows an mev distribution \amtwo{denoted by $G_{\bM}$} with unit-Fr\'echet margins and stdf 
\begin{equation}
\label{stdfgeneralmodel}
    \ell(\bx) = \sum_{k \in R} \ell^{(k)}_{J_k}\rbr{\rbr{a_{jk}x_j}_{j \in J_k}},
    \qquad \bx \in [0, \infty)^d.
\end{equation}
\end{theorem}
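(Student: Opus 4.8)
The plan is to compute the joint cdf $G_{\bM}(\bx) = \pr[\bM \le \bx]$ of the mixture model directly, identify its exponent with $\ell(\bone/\bx)$ for $\ell$ as in \eqref{stdfgeneralmodel}, and then check that this $\ell$ is a genuine stdf, so that $G_{\bM}$ is recognised as an mev distribution with unit-Fréchet margins and stdf \eqref{stdfgeneralmodel}.

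First I would fix $\bx \in (0,\infty)^d$. Since every $Z_j^{(k)}$ is almost surely positive and finite, $a_{jk} Z_j^{(k)}$ equals $0$ exactly when $a_{jk} = 0$; hence $\cbr{M_j \le x_j} = \cbr{a_{jk} Z_j^{(k)} \le x_j \text{ for all } k \in R}$, and the conditions with $a_{jk} = 0$ hold automatically because $x_j > 0$. Grouping the remaining conditions by $k$ and using independence of $\bZ^{(1)}, \ldots, \bZ^{(r)}$,
\[
  G_{\bM}(\bx) = \prod_{k \in R} \pr\sbr{ Z_j^{(k)} \le x_j / a_{jk} \text{ for all } j \in J_k } = \prod_{k \in R} \pr\sbr{ \bZ^{(k)}_{J_k} \le (x_j / a_{jk})_{j \in J_k} }.
\]
Next I would use that $\bZ^{(k)}$ is mev with unit-Fréchet margins and stdf $\ell^{(k)}$, so its cdf on $(0,\infty)^d$ is $\bz \mapsto \exp(-\ell^{(k)}(1/z_1,\ldots,1/z_d))$; letting $z_j \uparrow \infty$ for $j \notin J_k$ and invoking continuity of the cdf together with the margin constraint of stdfs, the marginal $\bZ^{(k)}_{J_k}$ has cdf $\bz_{J_k} \mapsto \exp(-\ell^{(k)}_{J_k}((1/z_j)_{j \in J_k}))$ with $\ell^{(k)}_{J_k}$ as in \eqref{equa:k-thstdf}. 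Substituting $z_j = x_j / a_{jk}$ and multiplying over $k$ then gives $G_{\bM}(\bx) = \exp(-\ell(\bone/\bx))$ with $\ell$ the function in \eqref{stdfgeneralmodel}.

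It remains to verify that $\ell$ behaves like a stdf. Homogeneity $\ell(c\bx) = c\,\ell(\bx)$ for $c \ge 0$ is inherited from the $\ell^{(k)}_{J_k}$; this already yields $G_{\bM}(n\bx)^n = G_{\bM}(\bx)$, so $G_{\bM}$ lies in its own domain of attraction (take i.i.d.\ copies of $\bM$ and $\ba_n = n\bone$, $\bb_n = \bzero$ in \eqref{eq:MDA}), hence is an mev distribution. For the margins I would set all coordinates of $\bx$ but the $j$-th to zero and use \eqref{equa:k-thstdf}, the margin constraint $\ell^{(k)}(s\,\bo{e}_j) = s$, and the row-sum condition $\sum_{k \in R} a_{jk} = 1$ (with $a_{jk} = 0$ when $j \notin J_k$), to obtain $\ell(t\,\bo{e}_j) = \sum_{k : j \in J_k} a_{jk}\, t = t$, so that the $j$-th margin of $G_{\bM}$ is $x_j \mapsto \exp(-1/x_j)$, i.e.\ unit-Fréchet. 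Finally \eqref{equa:defstdf} identifies the stdf of $G_{\bM}$ as $\bx \mapsto -\ln G_{\bM}(\bone/\bx) = \ell(\bx)$, which is \eqref{stdfgeneralmodel}.

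Nothing in this argument uses that $J_1, \ldots, J_r$ are distinct, which is exactly the extension over \citet[Theorem~1]{S03}. I expect the only points needing real care to be the two bookkeeping steps: restricting the event $\cbr{\bM \le \bx}$ to the coordinates in $J_k$ when $a_{jk}$ may vanish, and justifying that marginalising an mev stdf to $J_k$ amounts to zeroing out the remaining arguments of $\ell^{(k)}$ — which is precisely the content of \eqref{equa:k-thstdf} together with the continuity and margin-constraint properties of mev cdfs recalled in Section~\ref{sec:background}.
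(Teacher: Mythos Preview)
Your proof is correct and follows essentially the same route as the paper: compute $G_{\bM}(\bx)$ for $\bx \in (0,\infty)^d$ using independence of the factors, identify the exponent via the lower-dimensional stdfs $\ell^{(k)}_{J_k}$, invoke homogeneity to get max-stability, and use the row-sum constraint for the unit-Fr\'echet margins. The paper additionally notes in one line that $G_{\bM}(\bx)=0$ when some $x_j=0$, but this is immaterial for identifying the stdf.
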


In the context of anomaly detection, \citet{chiapino2020} define a mixture model for the angular measure of $G_{\bM}$ in terms of probability measures on the faces of the unit simplex. In contrast, the mixture model in Definition~\ref{mixturemodel} is obtained by an aggregation of lower-dimensional mev distributions.

Zero entries of the coefficient matrix $A$ allow some groups of components of the random vector $\bM$ to constitute an \extreme. Recall $J_k$ in \eqref{eq:Jk}. 

\begin{proposition}
\label{propo:extremedirections}
The set of extreme directions of the random vector $\bM$ in \eqref{mixturemodel} is 
\[
    \mathcal{R} = \cbr{J \subseteq D : \; J=J_k \text{ for some } k \in R }.
\]
\end{proposition}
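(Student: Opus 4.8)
The plan is to identify the exponent measure of $G_{\bM}$ as a sum of $r$ exponent measures, one per factor, each living on a single one of the partition pieces $\EE_{J_k}$, after which Definition~\ref{def:Xdir} yields the result at once.

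First I would rewrite~\eqref{mixturemodel} as a componentwise maximum of independent contributions: with $\bM^{(k)} := (a_{jk}Z^{(k)}_j)_{j \in D}$ for $k \in R$, one has $\bM = \bigvee_{k \in R} \bM^{(k)}$, and $\bM^{(1)},\ldots,\bM^{(r)}$ are independent since $\bM^{(k)}$ is a function of $\bZ^{(k)}$ alone. Next I would check that each $\bM^{(k)}$ carries a (possibly degenerate) max-stable law: the subvector $\bM^{(k)}_{J_k} = (a_{jk}Z^{(k)}_j)_{j \in J_k}$ is a positive rescaling of the mev vector $\bZ^{(k)}_{J_k}$, hence again mev (with \FR{} margins), while $\bM^{(k)}_{D \setminus J_k} = \bzero$ almost surely since $a_{jk} = 0$ for $j \notin J_k$. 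Consequently $\bM^{(k)}$ admits an exponent measure $\mu^{(k)}$ on $\EE$; because rescaling by the positive numbers $(a_{jk})_{j \in J_k}$ is a homeomorphism of $(0,\infty)^{J_k}$ and because $J_k$ is the only extreme direction of $\bZ^{(k)}_{J_k}$ (Remark~\ref{rem:ZkJk}), the measure $\mu^{(k)}$ is concentrated on $\EE_{J_k} = \cbr{\bx \in \EE : x_j > 0 \text{ iff } j \in J_k}$ and satisfies $\mu^{(k)}(\EE_{J_k}) > 0$.

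I would then combine the factors through independence: for $\bx \in [\bzero,\binfty]$ one gets $G_{\bM}(\bx) = \prod_{k \in R} G_{\bM^{(k)}}(\bx) = \exp\cbr{ -\sum_{k \in R} \mu^{(k)}\rbr{\EE \setminus [\bzero,\bx]} }$, and since $\sum_{k \in R}\mu^{(k)}$ is a Borel measure on $\EE$ that is finite on sets bounded away from the origin, the uniqueness built into the characterization~\eqref{equa:exponentmeasure} forces the exponent measure $\mu$ of $G_{\bM}$ to equal $\sum_{k \in R}\mu^{(k)}$. Finally I would evaluate $\mu$ on the partition pieces: for non-empty $J \subseteq D$ the sets $\EE_J$ and $\EE_{J'}$ are disjoint whenever $J \ne J'$, so $\mu(\EE_J) = \sum_{k \in R}\mu^{(k)}(\EE_J) = \sum_{k \in R : J_k = J}\mu^{(k)}(\EE_{J_k})$ is a sum of strictly positive terms. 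Hence $\mu(\EE_J) > 0$ precisely when $J = J_k$ for at least one $k \in R$, which by Definition~\ref{def:Xdir} is exactly the asserted description $\mathcal{R} = \cbr{J \subseteq D : J = J_k \text{ for some } k \in R}$ of the extreme directions of $\bM$.

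The step I expect to be the main obstacle is the second one: one has to make precise that the degenerate factor law $G_{\bM^{(k)}}$ genuinely possesses an exponent measure, that this measure is concentrated on $\EE_{J_k}$, and that it assigns $\EE_{J_k}$ positive mass. This can be done by transporting the exponent measure of the nondegenerate vector $\bM^{(k)}_{J_k}$ from $[0,\infty)^{J_k}\setminus\cbr{\bzero}$ to $\EE$ via the coordinate embedding that fills in zeros on $D \setminus J_k$. Everything after that is bookkeeping with the partition $\cbr{\EE_J : \emptyset \ne J \subseteq D}$ and with the additivity of exponent measures under products of max-stable distribution functions. If one prefers to stay entirely within the paper's framework of nondegenerate mev laws, two equivalent routes are available: argue on angular measures, noting that the angular measure of $G_{\bM}$ is the sum of those of the $\bM^{(k)}$, each concentrated on $\sphere_{J_k}$ with positive total mass, and conclude via Proposition~\ref{propo:angularmeasureinterpretation}; or read the decomposition $\mu = \sum_{k \in R}\mu^{(k)}$ directly off the explicit stdf in Theorem~\ref{stephanson}, whose $k$-th summand equals $\mu^{(k)}\rbr{\EE \setminus [\bzero,\bone/\bx]}$ as a function of $\bx$.
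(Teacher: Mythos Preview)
Your proof is correct and essentially the same as the paper's: both identify the exponent measure of $G_{\bM}$ as a sum $\sum_{k \in R} \mu^{(k)}$ with each $\mu^{(k)}$ concentrated on $\EE_{J_k}$ and assigning it positive mass, then conclude via Definition~\ref{def:Xdir}. The only difference is organizational --- the paper works with the exponent measures of the subvectors $\bZ^{(k)}_{J_k}$ transported to $\EE$ by scaling and coordinate embedding and verifies agreement with $\mu$ on sets $[\bzero,\bx]^c$ via Theorem~\ref{stephanson}, while you phrase the same construction through the degenerate full-dimensional factors $\bM^{(k)}$ and the product decomposition $G_{\bM} = \prod_k G_{\bM^{(k)}}$.
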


\begin{remark}
Given any collection $\mathcal{R} = \cbr{J_1,\ldots,J_r}$ of distinct non-empty subsets of $D$ with the property $\bigcup_{k=1}^r J_r = D$, we can easily construct a $d \times r$ coefficient matrix $A$ as in Definition~\ref{def:mixture} such that equality~\eqref{eq:Jk} holds for all $k$ (put zeros and ones on the appropriate places and then normalize the rows). The set of extreme directions of the mixture model~\eqref{mixturemodel} is then exactly $\mathcal{R}$.
\end{remark}

Let $\tilde{G}_{\bM}$ be the Gumbel standardization of $G_{\bM}$, that is, $\tilde{G}_{\bM}$ is the cdf of $(\ln M_j)_{j \in D}$. We will investigate the mgp distribution
\begin{equation}
H=\GP(\tilde{G}_{\bM})
\label{equa:H-mgp}
\end{equation}
as defined in Section~\ref{sec:backgroundmgpd}. We will refer to $H$ as the \emph{mgp distribution associated with the mixture model} $\bM$ and to $\bY \sim H$ as the \emph{mgp random vector associated with the mixture model} $\bM$. For a non-empty set $J \subseteq D$, recall the set $\mathbb{A}_J$,  the projection $\pi_J$ defined in Eq.~\eqref{eq:AJ}, \amtwo{and the representation $\bU$ of the mgp distribution $H$ as in Section~\ref{sec:backgroundmgpd}.} 
For $k \in R$, since the only extreme direction of \amtwo{the $\reals^{J_k}$-valued random vector $\bZ_{J_k}^{(k)}$} is $J_k$, Theorem~\ref{Alllinked} yields the existence of a \amtwo{generator} $\bU^{(k)}$ taking values in $\reals^{J_k}$ which satisfies \eqref{equa:conditiononU} and 
\begin{equation}
\label{equa:U^(k)}
    \expec \sbr{\max_{j \in J_k} \cbr{x_j\ee^{U_{j}^{(k)}}}}= \ell_{J_k}^{(k)} \rbr{\bx_{J_k}}, \qquad \bx_{J_k} \in [0,\infty)^{J_k}.
\end{equation}
 The following proposition provides a $\bU$-generator of $H$ in terms of $\bU^{(1)},\ldots,\bU^{(r)}$.

\begin{proposition}
\label{propo-generator-general-model}
   For any $\bo{m} \in (0,1)^{r}$ such that $\sum_{k=1}^r m_k=1$, the random vector $\bU$ in 
   $[-\infty,\infty)^d$ with distribution
       \begin{equation}
       \begin{aligned}
          \pr \sbr{\bU \in \point}
        = \sum_{k \in R} m_k \pr \sbr{ 
             \bU^{(k)}+\rbr{\ln\rbr{a_{jk}/ m_k}}_{j \in J_k} \in \pi_{J_k}\rbr{ \mathbb{A}_{J_k} \cap \point }
        }
         \label{equa:generalgenerator}
       \end{aligned}
    \end{equation}
    is a $\bU$-generator of the mgp distribution $H$ in \eqref{equa:H-mgp}, i.e., $H = \GP_{\bU}(\law(\bU))$.
\end{proposition}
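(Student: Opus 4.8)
The plan is to verify directly that the random vector $\bU$ defined through \eqref{equa:generalgenerator} meets the two requirements for being a $\bU$-generator of $H$: the normalization $\expec[\ee^{U_1}] = \ldots = \expec[\ee^{U_d}] \in (0,\infty)$, and the stable tail dependence identity \eqref{eq:generator} with the stdf $\ell$ of $G_{\bM}$ given by \eqref{stdfgeneralmodel}. The only structural observation needed is that \eqref{equa:generalgenerator} exhibits $\bU$ as a finite mixture of $r$ probability measures: with probability $m_k$ the subvector $(U_j)_{j \in J_k}$ equals $\bU^{(k)} + \bigl(\ln(a_{jk}/m_k)\bigr)_{j \in J_k}$, while $U_j = -\infty$ for $j \notin J_k$; throughout, the convention $\ee^{-\infty} = 0$ is in force. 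Recall also that each $\bU^{(k)}$ obeys $\expec[\ee^{U_j^{(k)}}] = 1$ for $j \in J_k$ (that is, \eqref{equa:conditiononU}) together with \eqref{equa:U^(k)}.

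First I would check the normalization by conditioning on the mixture component. If $j \notin J_k$, then within component $k$ one has $\ee^{U_j} = 0$; if $j \in J_k$, then $\ee^{U_j} = (a_{jk}/m_k)\,\ee^{U_j^{(k)}}$. Summing over $k$, the weights $m_k$ cancel and
\[
    \expec[\ee^{U_j}] = \sum_{k \,:\, j \in J_k} a_{jk}\,\expec[\ee^{U_j^{(k)}}] = \sum_{k=1}^{r} a_{jk} = 1,
\]
using $\expec[\ee^{U_j^{(k)}}]=1$, the unit row sums of $A$, and the fact that $a_{jk}=0$ exactly when $j \notin J_k$. Hence $\expec[\ee^{U_j}]=1$ for every $j \in D$, so the normalization holds and, in passing, $\bU$ itself satisfies \eqref{equa:conditiononU}.

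Next I would establish the stdf identity. Fix $\by \in [\bzero,\binfty)$ and condition again on the mixture component. Within component $k$, the coordinates outside $J_k$ contribute $y_j\ee^{U_j}=0$, and since $\by \ge \bzero$ makes all remaining terms nonnegative, the maximum collapses: $\max(\by\ee^{\bU}) = \max_{j \in J_k}\{(a_{jk}/m_k)\,y_j\,\ee^{U_j^{(k)}}\}$. Taking expectations, pulling the factor $1/m_k$ through the maximum, and summing over $k$ gives
\[
    \expec\bigl[\max(\by\ee^{\bU})\bigr]
    = \sum_{k \in R} \expec\Bigl[\max_{j \in J_k}\{(a_{jk}y_j)\,\ee^{U_j^{(k)}}\}\Bigr]
    = \sum_{k \in R} \ell^{(k)}_{J_k}\bigl((a_{jk}y_j)_{j \in J_k}\bigr)
    = \ell(\by),
\]
where the middle step is \eqref{equa:U^(k)} with argument $(a_{jk}y_j)_{j \in J_k}$ and the last step is \eqref{stdfgeneralmodel} (Theorem~\ref{stephanson}). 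Dividing by $\expec[\ee^{U_1}]=1$ reproduces exactly \eqref{eq:generator}, so $\bU$ is a $\bU$-generator of the mgp distribution with stdf $\ell$, which is $H = \GP(\tilde{G}_{\bM})$ by \eqref{equa:H-mgp}.

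I do not expect a genuine obstacle here: the argument is a conditioning computation. The only points that need care are the bookkeeping with the value $-\infty$ and the embedding $\reals^{J_k} \hookrightarrow [-\infty,\infty)^d$ implicit in \eqref{equa:generalgenerator} (filling coordinates outside $J_k$ with $-\infty$), together with the observation that, because $\by$ has nonnegative entries, the zero contributions from the components outside $J_k$ never affect the maximum, so it genuinely reduces to a maximum over $J_k$. Finiteness of all the expectations involved is automatic from $\expec[\ee^{U_j^{(k)}}]<\infty$, which is part of \eqref{equa:conditiononU}.
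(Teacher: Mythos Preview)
Your proposal is correct and takes essentially the same approach as the paper: both proofs verify directly that $\bU$ satisfies the normalization $\expec[\ee^{U_j}]=1$ and the identity $\expec[\max(\by\ee^{\bU})]=\ell(\by)$ by conditioning on the mixture component and using \eqref{equa:U^(k)} and \eqref{stdfgeneralmodel}. The paper frames the conditioning via a generic function $g$ and the projection inverse $\pi_{J_k}^{-1}$, but the computations are identical to yours.
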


The random vector $\bU$ in \eqref{equa:generalgenerator} will be called a mixture generator with coefficient matrix $A$, generators $\bU^{(1)},\ldots,\bU^{(r)}$ and mass vector $\bo{m}$. 
\jsch{Different choices of the vector $\bo{m}$ yield different $\bo{U}$ generators, all of which however induce the \emph{same} mgp distribution $H$, namely, the one associated to the mixture model. Among all possible choices for $\bo{m}$, the obvious one is $m_k = 1/r$ for all $k \in \cbr{1,\ldots,r}$, but the formula allows for other possibilities. Further, note that $m_k$ is \emph{not} equal to the probability that the mgp vector $\bY \sim H$ lies in extreme direction $J_k$: instead, we have $\pr[Y_j > -\infty \text{ iff } j \in J_k] = w_k$, a quantity which is proportional to $\ell_{J_k}^{(k)}((a_{jk})_{j\in J_k})$, as stated in Proposition~\ref{lemma:conditionalT's} below.}

Up to now, we introduced the mixture model and we investigated the associated mgp distribution. Conversely, the following theorem shows the generality of the construction in the sense that it accommodates all mgp distributions. We say that the coefficient matrix $A$ has \emph{mutually different signatures} if the sets $J_1,\ldots,J_r$ in \eqref{eq:Jk} are all distinct.

\begin{theorem}
\label{theorem:general}
Any mev distribution with unit-\FR~margins is the cdf of a mixture model whose coefficient matrix has mutually different signatures. As a consequence, any mgp distribution is associated with a mixture model whose coefficient matrix has mutually different signatures.
\end{theorem}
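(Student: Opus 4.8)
The plan is to realize a given mev distribution as a mixture model by partitioning its angular measure over the faces of the unit sphere and attaching one factor to each face carrying positive mass. Fix a norm $\norm$ on $\Rd$ and let $\phi$ be the angular measure of the given mev distribution $G^{*}$ with respect to it. From the pseudo-polar decomposition of the exponent measure $\mu$ (a consequence of its $(-1)$-homogeneity) one has the spectral representation $\ell(\bx) = \int_{\sphere}\max_{j\in D}(x_j w_j)\,\phi(\mathrm{d}\bw)$ for $\bx\in[\bzero,\binfty)$, together with the marginal normalization $\int_{\sphere}w_j\,\phi(\mathrm{d}\bw)=\ell(\bo{e}_j)=1$ for every $j\in D$. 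Since $\sphere$ is the disjoint union of the faces $\sphere_J$, $\emptyset\ne J\subseteq D$, and since $w_j=0$ on $\sphere_J$ when $j\notin J$ whereas $w_j>0$ on $\sphere_J$ when $j\in J$, this decomposes as $\ell(\bx)=\sum_{J}\int_{\sphere_J}\max_{j\in J}(x_j w_j)\,\phi(\mathrm{d}\bw)$, with only the extreme directions $J\in\scenario(\mu)$ — those satisfying $\phi(\sphere_J)>0$ by Proposition~\ref{propo:angularmeasureinterpretation} — contributing a non-zero term.

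Next I would turn each surviving term into a factor. Enumerate $\scenario(\mu)=\{J_1,\ldots,J_r\}$ as a list of distinct non-empty subsets of $D$; note $\bigcup_{k=1}^{r}J_k=D$, since for each $j$ the normalization $\int_{\sphere}w_j\,\phi(\mathrm{d}\bw)=1$ forces $w_j>0$ on a $\phi$-positive set, which lies in some $\sphere_{J_k}$ with $j\in J_k$. Define $A=(a_{jk})$ by $a_{jk}:=\int_{\sphere_{J_k}}w_j\,\phi(\mathrm{d}\bw)$ for $j\in J_k$ and $a_{jk}:=0$ otherwise; then $a_{jk}>0$ exactly when $j\in J_k$ (strict positivity because $w_j>0$ everywhere on $\sphere_{J_k}$ and $\phi(\sphere_{J_k})>0$), so the $k$-th signature of $A$ equals $J_k$, the signatures are mutually different, the column sums are positive, and the row sums satisfy $\sum_{k=1}^{r}a_{jk}=\int_{\sphere}w_j\,\phi(\mathrm{d}\bw)=1$. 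For each $k$, set $\ell^{(k)}_{J_k}(\by_{J_k}):=\int_{\sphere_{J_k}}\max_{j\in J_k}(w_j y_j/a_{jk})\,\phi(\mathrm{d}\bw)$ for $\by_{J_k}\in[0,\infty)^{J_k}$. This is a valid stdf on $[0,\infty)^{J_k}$: it is of the $\bU$-generator form~\eqref{eq:generator} and satisfies $\ell^{(k)}_{J_k}(\bo{e}_j)=\int_{\sphere_{J_k}}w_j/a_{jk}\,\phi(\mathrm{d}\bw)=1$ for $j\in J_k$; hence it is the stdf of some mev distribution on $[0,\infty)^{J_k}$ with unit-\FR{} margins, which we take as the law of $\bZ^{(k)}_{J_k}$. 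Because the spectral mass of this distribution sits entirely on directions with $w_j>0$ for all $j\in J_k$ and has positive total mass $\phi(\sphere_{J_k})$, Proposition~\ref{propo:angularmeasureinterpretation} yields that its only extreme direction is $J_k$, as Definition~\ref{def:mixture} and Remark~\ref{rem:ZkJk} require.

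It then remains to apply Theorem~\ref{stephanson}. The mixture model $\bM$ built from $A$ and the factors $\bZ^{(1)},\ldots,\bZ^{(r)}$ has stdf $\sum_{k=1}^{r}\ell^{(k)}_{J_k}\rbr{(a_{jk}x_j)_{j\in J_k}}$, which, by the very definition of $\ell^{(k)}_{J_k}$ (the factors $a_{jk}$ cancelling), equals $\sum_{k=1}^{r}\int_{\sphere_{J_k}}\max_{j\in J_k}(w_j x_j)\,\phi(\mathrm{d}\bw)=\int_{\sphere}\max_{j\in D}(w_j x_j)\,\phi(\mathrm{d}\bw)=\ell(\bx)$. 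Hence $G_{\bM}=G^{*}$, and the coefficient matrix of this mixture model has mutually different signatures, proving the first assertion. The consequence is immediate: any mgp distribution equals $\GP(G)$ for an mev distribution $G$ with Gumbel margins; its unit-\FR{} standardization $G^{*}$ equals $G_{\bM}$ for some mixture model $\bM$ with mutually different signatures by the first part, so $G=\tilde{G}_{\bM}$ and the given mgp distribution is $\GP(\tilde{G}_{\bM})$ as in~\eqref{equa:H-mgp}, the mgp distribution associated with $\bM$.

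The points I would handle with the most care — though none is a genuine obstacle — are, first, that the functions $\ell^{(k)}_{J_k}$ assembled from the restricted and rescaled angular measure really are stdfs of mev distributions with unit-\FR{} margins, so that the factors $\bZ^{(k)}_{J_k}$ exist, and, second, that each such factor has exactly the single extreme direction $J_k$ rather than a strict subset of it. Both reduce to the fact that $\phi$ restricted to $\sphere_{J_k}$, regarded after coordinatewise rescaling as a spectral measure, is a finite measure of positive total mass supported in the relative interior of the orthant spanned by the coordinates in $J_k$, combined with Proposition~\ref{propo:angularmeasureinterpretation}. The conceptual content of the proof is just this face-by-face splitting of the angular measure together with the additivity of stdfs under the mixture construction established in Theorem~\ref{stephanson}.
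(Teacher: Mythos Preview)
Your proposal is correct and follows essentially the same route as the paper: both decompose the exponent/angular measure over the faces $\sphere_{J}$ (equivalently $\EE_{J}$) carrying positive mass, define one factor per such face with the appropriately rescaled restricted measure so that margins become unit-\FR{}, and then invoke Theorem~\ref{stephanson} to recover the original stdf/exponent measure. The only cosmetic difference is that the paper works directly with the exponent measure $\mu$ and sets $a_{jk}=\mu\bigl(\{\by\in\EE_{J_k}:y_j>1\}\bigr)$, defining the factor via $\mu_{J_k}^{(k)}(B)=\mu\bigl(\pi_{J_k}^{-1}((a_{jk})_{j\in J_k}\cdot B)\bigr)$ and checking $\tilde\mu=\mu$ on complements of rectangles; your version uses the angular measure and the spectral representation of $\ell$, with $a_{jk}=\int_{\sphere_{J_k}}w_j\,\phi(\mathrm{d}\bw)$, which is the same quantity by the pseudo-polar decomposition.
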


The coefficient matrix and the factors of the mixture model in Theorem~\ref{theorem:general} are 
specified in the proof. Applying Theorem~\ref{theorem:general} to the mixture model itself yields the following corollary.

\begin{corollary}
\label{corollary:anygenerator}
    Let $\bM$ be the mixture model in Definition~\ref{def:mixture}.
    There exist an integer $1 \leq s \leq r$ and a $\rbr{d \times s}$ mixture model $\bN$ whose coefficient matrix has mutually different signatures such that $\bM$ and $\bN$ have the same cdf and consequently the same associated mgp distribution. 
\end{corollary}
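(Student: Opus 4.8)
\textbf{Proof proposal for Corollary~\ref{corollary:anygenerator}.}
The plan is to apply Theorem~\ref{theorem:general} not to an arbitrary mev distribution but specifically to $G_{\bM}$, the mev distribution with unit-Fréchet margins that the mixture model $\bM$ follows by Theorem~\ref{stephanson}. By Theorem~\ref{theorem:general}, there exists \emph{some} mixture model $\bN$, say $(d \times s)$ for an integer $s \geq 1$, whose coefficient matrix has mutually different signatures, and whose cdf equals $G_{\bM}$. Since the associated mgp distribution $\GP(\tilde{G})$ depends only on the Gumbel standardization $\tilde{G}$ of the underlying mev law, and since $\bM$ and $\bN$ share the same unit-Fréchet mev cdf $G_{\bM}$, they have identical Gumbel standardizations and hence the same associated mgp distribution $H$. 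This already gives everything except the bound $s \leq r$.

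To obtain $s \leq r$, I would track the construction in the (as yet unseen) proof of Theorem~\ref{theorem:general} more carefully, or—more robustly—argue as follows. The extreme directions of $\bN$ must coincide with those of $\bM$, because extreme directions are a property of the mev distribution alone (Definition~\ref{def:Xdir} is stated in terms of $\mu$, equivalently of $\phi$ via Proposition~\ref{propo:angularmeasureinterpretation}), and $\bM$ and $\bN$ have the same mev law. By Proposition~\ref{propo:extremedirections}, the extreme directions of $\bM$ are exactly $\mathcal{R} = \cbr{J_1,\ldots,J_r}$, a set of cardinality at most $r$ (equality fails only when some signatures of $A$ coincide). Again by Proposition~\ref{propo:extremedirections}, the extreme directions of $\bN$ are the signatures of its coefficient matrix, and since $\bN$ has mutually different signatures, its number of columns $s$ equals $\lvert \mathcal{R}\rvert \leq r$. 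Finally $s \geq 1$ is immediate since $D \neq \emptyset$ forces at least one extreme direction.

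The last sentence of the corollary—that $\bM$ and $\bN$ have the same associated mgp distribution—then follows from the first part exactly as argued above: equality of the unit-Fréchet mev cdfs yields equality of the Gumbel standardizations and hence of the mgp distributions $H = \GP(\tilde{G}_{\bM}) = \GP(\tilde{G}_{\bN})$.

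The main obstacle I anticipate is the bound $s \leq r$: Theorem~\ref{theorem:general} as stated only asserts existence of \emph{a} mixture model with mutually different signatures and gives no a priori control on its dimension. The clean way around this is precisely the extreme-direction argument above, which pins down $s = \lvert \mathcal{R}\rvert$ via Proposition~\ref{propo:extremedirections}; one should check that the construction in the proof of Theorem~\ref{theorem:general}, when fed $G_{\bM}$, indeed produces a coefficient matrix whose signatures are exactly the distinct elements among $J_1,\ldots,J_r$ (rather than some unrelated collection), but this is forced by the fact that extreme directions are invariant under passing to an equal-in-distribution mev representation.
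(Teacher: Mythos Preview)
Your proposal is correct and follows the same route as the paper, which simply states that the corollary follows by applying Theorem~\ref{theorem:general} to $G_{\bM}$. Your extra care in justifying $s\le r$ via Proposition~\ref{propo:extremedirections} is exactly what is implicit in the paper's construction: the proof of Theorem~\ref{theorem:general} builds $\bN$ with one column per extreme direction of the input distribution, so $s=\lvert\mathcal{R}\rvert\le r$, and your argument recovers this bound without needing to inspect that construction.
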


\begin{remark}
    On the one hand, Corollary~\ref{corollary:anygenerator} suggests to restrict mixture models to the case where 
    all columns of the coefficient matrix have different signatures. On the other hand, for statistical practice, it might be difficult to combine the dependence structures of two columns with the same signature and different stdf's.
\end{remark}

Let $\bY \sim H$ be the mgp random vector associated with the mixture model $\bM$. Using Theorem~\ref{theorem:pdfofUwrtmu}, we express the density of $\bY$ with respect to the measure $\upsilon$ in \eqref{eq:upsilon}.

\begin{proposition}
\label{propo:densityofmixturemgp}
Consider the mixture model $\bM$ in Definition~\ref{def:mixture} and recall its stdf $\ell$ in \eqref{stdfgeneralmodel}.
For all $k \in R$, suppose that the generator $\bU^{(k)}$ of $\bZ^{(k)}$ is absolutely continuous with respect to $\lambda_{\lvert J_k \rvert}$ with Lebesgue density $f_{\bU^{(k)}}$. The mgp random vector $\bY$ with distribution $H$ in \eqref{equa:H-mgp} is then absolutely continuous with respect to $\upsilon$ with density $h_{\bY}$ given for $\by \in [-\infty,\infty)^d$ with $\max(\by) > 0$ by 
    \begin{equation}
 h_{\bY}(\by)
 =   \sum_{k \in R} \frac{\indicator_{\mathbb{A}_{J_k}}(\by)}{\ell(\bone)}
    \int_{z \in \reals} f_{\bU^{(k)}} \rbr{\rbr{y_j-\ln a_{jk}+z}_{j \in J_k}} e^z \diff z.
    \label{equa:densityofmixturemgp}
    \end{equation}
\end{proposition}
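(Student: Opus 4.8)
The plan is to identify a $\bU$-generator of $H$ whose density with respect to $\upsilon$ is explicit, and then invoke Theorem~\ref{theorem:pdfofUwrtmu}. First I would fix a mass vector $\bo{m}\in(0,1)^r$ with $\sum_{k\in R}m_k=1$; the particular choice will turn out to be irrelevant. By Proposition~\ref{propo-generator-general-model}, the random vector $\bU$ with law \eqref{equa:generalgenerator} is then a $\bU$-generator of $H$, and its distribution is a mixture: with probability $m_k$ it lives on the stratum $\mathbb{A}_{J_k}$ defined in \eqref{eq:AJ}, where its $J_k$-subvector is distributed as $\bU^{(k)}+(\ln(a_{jk}/m_k))_{j\in J_k}$. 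Since $\bU^{(k)}$ has Lebesgue density $f_{\bU^{(k)}}$ on $\reals^{J_k}$, the shifted vector has density $\bx\mapsto f_{\bU^{(k)}}\bigl((x_j-\ln a_{jk}+\ln m_k)_{j\in J_k}\bigr)$. The measure $\upsilon$ of \eqref{eq:upsilon} is exactly the sum, over the strata $\mathbb{A}_J$ that partition $[-\infty,\infty)^d$, of the image of $\lambda_{\lvert J\rvert}$ under $\pi_J^{-1}$. Disintegrating the mixture along these strata --- and adding up the contributions of any columns $k$ sharing the same signature $J_k$ --- by testing against an arbitrary Borel set and matching with \eqref{equa:generalgenerator}, I would conclude that $\bU$ is absolutely continuous with respect to $\upsilon$ with density
\[
    f_{\bU}(\bu) = \sum_{k\in R} m_k\,\indicator_{\mathbb{A}_{J_k}}(\bu)\,f_{\bU^{(k)}}\bigl((u_j-\ln a_{jk}+\ln m_k)_{j\in J_k}\bigr).
\]

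Next I would pin down the normalizing constant in Theorem~\ref{theorem:pdfofUwrtmu}. From the displayed formula, $\expec[\ee^{U_j}]=\sum_{k:\,j\in J_k}m_k(a_{jk}/m_k)\expec[\ee^{U_j^{(k)}}]=\sum_{k\in R}a_{jk}=1$ for every $j\in D$, by the normalization \eqref{equa:conditiononU} of each $\bU^{(k)}$ and the unit row sums of $A$; hence evaluating \eqref{eq:generator} at $\by=\bone$ gives $\expec[\ee^{\max(\bU)}]=\ell(\bone)$, with $\ell$ the stdf \eqref{stdfgeneralmodel} of $\bM$. Theorem~\ref{theorem:pdfofUwrtmu} then yields, for $\by$ with $\max(\by)>0$,
\begin{align*}
    h_{\bY}(\by)
    &= \frac{1}{\ell(\bone)}\int_{r\in\reals} f_{\bU}(r+\by)\,\ee^r\diff r\\
    &= \frac{1}{\ell(\bone)}\sum_{k\in R} m_k\,\indicator_{\mathbb{A}_{J_k}}(\by)\int_{r\in\reals} f_{\bU^{(k)}}\bigl((y_j+r-\ln a_{jk}+\ln m_k)_{j\in J_k}\bigr)\ee^r\diff r,
\end{align*}
where I used $\indicator_{\mathbb{A}_{J_k}}(r+\by)=\indicator_{\mathbb{A}_{J_k}}(\by)$ for all $r\in\reals$ and exchanged the sum with the integral by Tonelli's theorem. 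Substituting $z=r+\ln m_k$ in the $k$-th integral turns $\ee^r\diff r$ into $m_k^{-1}\ee^z\diff z$; the factor $m_k$ cancels and \eqref{equa:densityofmixturemgp} follows. The cancellation also re-confirms that the density does not depend on $\bo{m}$, as it must, since all mixture generators induce the same mgp distribution.

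I expect the main obstacle to be the measure-theoretic bookkeeping in the first step: verifying that $\upsilon$ disintegrates cleanly along the strata $\mathbb{A}_J$, that the law of the mixture generator is absolutely continuous with respect to $\upsilon$ (and not merely with respect to a sum of Lebesgue measures on a fixed collection of strata), and that contributions from repeated signatures are aggregated correctly; once $f_{\bU}$ has been identified, the remainder is a direct application of Theorem~\ref{theorem:pdfofUwrtmu} together with an elementary change of variables.
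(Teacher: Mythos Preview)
Your proposal is correct and follows essentially the same route as the paper: identify the $\upsilon$-density of the mixture generator $\bU$ from Proposition~\ref{propo-generator-general-model} and then apply Theorem~\ref{theorem:pdfofUwrtmu}. You spell out two steps that the paper leaves implicit---the identification $\expec[\ee^{\max(\bU)}]=\ell(\bone)$ and the substitution $z=r+\ln m_k$ that makes the $m_k$ cancel---but the structure and the key ingredients are the same.
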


Proposition~\ref{propo:densityofmixturemgp} will be helpful in the next subsection when we let each of the factors $\bZ^{(1)},\ldots,\bZ^{(r)}$ follow a parametric model.

\subsection{Parametric examples}
\label{sec:examples}

We construct two parametric families of the mixture model $\bM$ in Definition~\ref{def:mixture} with coefficient matrix $A$ by choosing particular forms for the factors $\bZ^{(1)},\ldots,\bZ^{(r)}$: the logistic model and the \HR~model. In each case, we compute the stdf $\ell$, the generator $\bU$ and the density $h_{\bY}$ of the associated mgp distribution. 

\subsubsection{Mixture logistic model}
\label{subsec:mixturelogisticmodel}

For $k \in R$, recall $J_k$ in \eqref{eq:Jk} and suppose that the stdf associated with $\bZ_{J_k}^{(k)}$ is
\begin{equation}
\label{eq:stdflog}
    \ell_{J_k}^{(k)}(\by)
    = \rbr{\sum_{j \in J_k} y_j^{1/\alpha_k}}^{\alpha_k},
\end{equation}
for $\by \in [0, \infty)^{J_k}$, where $0 < \alpha_k < 1$. The only extreme direction of $\bZ_{J_k}^{(k)}$ is $J_k$, so that, following Definition~\ref{def:mixture} and Remark~\ref{rem:ZkJk}, the \emph{mixture logistic model} $\bM$ is well-defined. 
Let $G_{\bM}$ denote the cdf of $\bM$; note that $G_{\bM}$ is an mev distribution with unit-Fréchet margins. We compute its stdf using Theorem~\ref{stephanson}.

\begin{corollary}
The stdf of the mixture logistic model $\bM$ is
\begin{equation}
\label{stdfmixturelogisticmodel1}
    \ell(\by) = \sum_{k \in R} \cbr{ \sum_{j  \in J_k} \rbr{a_{jk}y_j}^{1/\alpha_k} }^{\alpha_k},
    \qquad \by \in [0, \infty)^d.
\end{equation}
\end{corollary}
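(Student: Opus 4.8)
The statement to prove is the formula~\eqref{stdfmixturelogisticmodel1} for the stdf of the mixture logistic model. The plan is to invoke Theorem~\ref{stephanson}, which already gives the stdf of a general mixture model~\eqref{mixturemodel} as the sum $\ell(\bx) = \sum_{k \in R} \ell^{(k)}_{J_k}((a_{jk}x_j)_{j \in J_k})$ over the $r$ factors. So the only work left is to substitute the specific logistic form~\eqref{eq:stdflog} of each $\ell^{(k)}_{J_k}$ into that sum.

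First I would check that the mixture logistic model is well-defined in the sense of Definition~\ref{def:mixture}: each factor $\bZ^{(k)}_{J_k}$ is required to be an mev random vector with unit-Fréchet margins whose unique extreme direction is $J_k$. The function in~\eqref{eq:stdflog} is the classical (symmetric) logistic stdf on the index set $J_k$ with parameter $\alpha_k \in (0,1)$; it is a valid stdf (homogeneous of order one, with the correct margins $\ell^{(k)}_{J_k}(\bo{e}_j) = 1$), and for $\alpha_k < 1$ the associated angular measure puts all its mass on the interior of the $J_k$-simplex, so the only extreme direction is $J_k$ — matching the requirement of Remark~\ref{rem:ZkJk}. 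When $J_k$ is a singleton the expression collapses to the identity, again consistent with Definition~\ref{def:mixture}. This justifies applying Theorem~\ref{stephanson}.

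Then I would simply write, for $\by \in [0,\infty)^d$,
\[
    \ell(\by)
    = \sum_{k \in R} \ell^{(k)}_{J_k}\bigl((a_{jk} y_j)_{j \in J_k}\bigr)
    = \sum_{k \in R} \Bigl( \sum_{j \in J_k} (a_{jk} y_j)^{1/\alpha_k} \Bigr)^{\alpha_k},
\]
where the first equality is~\eqref{stdfgeneralmodel} from Theorem~\ref{stephanson} and the second is the definition~\eqref{eq:stdflog} applied with argument $(a_{jk} y_j)_{j \in J_k}$ in place of $\by$. This is exactly~\eqref{stdfmixturelogisticmodel1}.

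There is essentially no obstacle here: the corollary is a direct specialization of Theorem~\ref{stephanson}, and the proof is one line of substitution once the well-definedness check is dispatched. The only point requiring a moment's care is confirming that~\eqref{eq:stdflog} genuinely has $J_k$ (and not a proper subset) as its extreme direction, so that Definition~\ref{def:mixture} applies verbatim; this is a standard fact about the symmetric logistic model for $\alpha_k < 1$.
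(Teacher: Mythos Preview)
Your proposal is correct and matches the paper's approach exactly: the paper presents this as an immediate corollary of Theorem~\ref{stephanson} without a separate proof, simply substituting the logistic stdf~\eqref{eq:stdflog} into the general formula~\eqref{stdfgeneralmodel}. Your additional verification that the symmetric logistic with $\alpha_k \in (0,1)$ has $J_k$ as its unique extreme direction is a useful sanity check the paper already dispatched in the sentence preceding the corollary.
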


\begin{remark}
    In case the sets $J_1,\ldots,J_r$ are all different, the stdf $\ell$ in \eqref{stdfmixturelogisticmodel1} coincides with the one of the asymmetric logistic model introduced in \citet{T1990}.
\end{remark}

Next we construct a $\bU$-generator of the mgp distribution $H=\GP(\tilde{G}_{\bM})$, where $\tilde{G}_{\bM}$ is the cdf of $\rbr{\ln M_1,\ldots, \ln M_d}$, an mev distribution with Gumbel margins. Let $\bU^{(1)},\ldots,\bU^{(r)}$ be random vectors in \amtwo{$\reals^{J_1},\ldots,\reals^{J_r}$}, respectively, such that  
\begin{equation}
    \begin{split}
    &\bU^{(1)},\ldots,\bU^{(r)} \text{ are independent, } \\ 
    &U_{j}^{(k)} \eqd \ln\left( \frac{ X_{j}^{\alpha_k}} {\Gamma(1-\alpha_k)} \right), \qquad k \in R, \ j \in J_k,
    \end{split}
    \label{equa:generatormixturemodel} 
\end{equation}
where $X_{1},\ldots,X_{d}$ are independent unit-Fr\'echet random variables and where $\Gamma$ denotes the gamma function.

\begin{proposition}    \label{propo:generatormixturelogisticmodel}
      Let $\bo{m} \in (0,1)^{r}$ be such that $\sum_{k=1}^r m_k=1$. The distribution~\eqref{equa:generalgenerator} with coefficient matrix $A$, generators $\bU^{(1)},\ldots,\bU^{(r)}$ in \eqref{equa:generatormixturemodel} and mass vector $\bo{m}$ is the one of a $\bU$-generator of the mgp distribution $H = \GP(\tilde{G}_{\bM})$ for the mixture logistic model $\bM$. 
\end{proposition}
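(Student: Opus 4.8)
The key fact to establish is that the random vectors $\bU^{(k)}$ defined in~\eqref{equa:generatormixturemodel} are indeed $\bU$-generators of the $|J_k|$-variate mev distributions with stdf $\ell_{J_k}^{(k)}$ given by~\eqref{eq:stdflog}, in the sense of~\eqref{equa:U^(k)}; once this is done, Proposition~\ref{propo-generator-general-model} applies verbatim to yield that the mixture distribution~\eqref{equa:generalgenerator} is a $\bU$-generator of $H = \GP(\tilde{G}_{\bM})$. So the proof reduces to a single computation at the level of each factor: for fixed $k \in R$ and writing $\alpha = \alpha_k$, $J = J_k$, I must verify
\[
    \expec\sbr{\max_{j \in J}\cbr{x_j \ee^{U_j^{(k)}}}} = \rbr{\sum_{j \in J} x_j^{1/\alpha}}^{\alpha}, \qquad \bx_J \in [0,\infty)^J,
\]
together with the normalization $\expec[\ee^{U_j^{(k)}}] = 1$ for all $j \in J$ (condition~\eqref{equa:conditiononU}). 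This is exactly the standard $\bU$-generator representation of the symmetric logistic model, which is recorded in \citet[Section~7.1]{A16}; so in principle one may simply cite that reference, but I would include the short verification for completeness.

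\textbf{Key steps.} First, observe that $\ee^{U_j^{(k)}} \eqd X_j^{\alpha}/\Gamma(1-\alpha)$ with $X_j$ unit-Fréchet and the $X_j$, $j \in J$, independent. For the normalization, compute $\expec[X_j^{\alpha}] = \int_0^\infty \alpha x^{\alpha} x^{-2}\ee^{-1/x}\diff x = \int_0^\infty t^{-\alpha}\ee^{-t}\diff t = \Gamma(1-\alpha)$ by the substitution $t = 1/x$ (valid since $0 < \alpha < 1$), hence $\expec[\ee^{U_j^{(k)}}] = 1$. Next, for the stdf identity, note that $\max_{j \in J}\{x_j X_j^{\alpha}\} = \rbr{\max_{j \in J}\{x_j^{1/\alpha} X_j\}}^{\alpha}$, and $\max_{j \in J}\{x_j^{1/\alpha}X_j\}$ is itself Fréchet: since $\pr[x_j^{1/\alpha}X_j \le u] = \exp(-x_j^{1/\alpha}/u)$, independence gives $\pr[\max_{j \in J}\{x_j^{1/\alpha}X_j\} \le u] = \exp(-(\sum_{j\in J} x_j^{1/\alpha})/u)$, i.e.\ a Fréchet variable with scale $c := \sum_{j \in J} x_j^{1/\alpha}$. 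Therefore $\max_{j\in J}\{x_j^{1/\alpha}X_j\} \eqd c\,X$ for a single unit-Fréchet $X$, and
\[
    \expec\sbr{\max_{j \in J}\cbr{x_j \ee^{U_j^{(k)}}}}
    = \frac{1}{\Gamma(1-\alpha)}\expec\sbr{\rbr{\max_{j\in J}\{x_j^{1/\alpha}X_j\}}^{\alpha}}
    = \frac{c^{\alpha}\expec[X^{\alpha}]}{\Gamma(1-\alpha)} = c^{\alpha} = \rbr{\sum_{j\in J} x_j^{1/\alpha}}^{\alpha},
\]
which is precisely $\ell_{J}^{(k)}(\bx_J)$ from~\eqref{eq:stdflog}. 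This confirms that $\bU^{(k)}$ satisfies~\eqref{equa:conditiononU} and~\eqref{equa:U^(k)}.

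\textbf{Conclusion and main obstacle.} With the factor-level claim in hand, the random vectors $\bU^{(1)},\ldots,\bU^{(r)}$ are exactly the objects required in the hypothesis of Proposition~\ref{propo-generator-general-model}: they take values in $\reals^{J_1},\ldots,\reals^{J_r}$, each has the prescribed single extreme direction $J_k$, and each satisfies the normalization~\eqref{equa:conditiononU} and the stdf identity~\eqref{equa:U^(k)}. Since moreover the coefficient matrix $A$ is the one used to define $\bM$ and has positive column sums (so the $J_k$ are non-empty), Proposition~\ref{propo-generator-general-model} immediately gives that for any $\bo{m} \in (0,1)^r$ with $\sum_k m_k = 1$, the distribution~\eqref{equa:generalgenerator} built from $A$, the $\bU^{(k)}$ and $\bo{m}$ is a $\bU$-generator of the mgp distribution $H = \GP_{\bU}(\law(\bU)) = \GP(\tilde{G}_{\bM})$, as claimed. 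There is essentially no obstacle here beyond the moment computation $\expec[X_j^{\alpha}] = \Gamma(1-\alpha)$ and the Fréchet max-stability identity, both routine; the only point to be careful about is bookkeeping of the index sets $\reals^{J_k}$ versus $\reals^{|J_k|}$ and checking that~\eqref{eq:stdflog} matches the normalization $\ell(\bone) = \sum_k \ell_{J_k}^{(k)}((a_{jk})_{j\in J_k})$ implicit in Proposition~\ref{propo-generator-general-model}, which follows directly from~\eqref{stdfgeneralmodel}.
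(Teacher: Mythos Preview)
Your proposal is correct and follows exactly the paper's approach: verify that each $\bU^{(k)}$ is a $\bU$-generator of the logistic stdf $\ell_{J_k}^{(k)}$ (the paper simply cites \citep[Proposition~1.2.1]{MF19} for this step, whereas you carry out the computation explicitly), then invoke Proposition~\ref{propo-generator-general-model}. There is a harmless typo in your Fr\'echet moment computation---the unit-Fr\'echet density is $x^{-2}\ee^{-1/x}$, without the extra factor~$\alpha$---but the stated value $\expec[X^{\alpha}]=\Gamma(1-\alpha)$ and all subsequent steps are correct.
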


Let $\bY \sim H$ be an mgp random vector associated with the mixture logistic model $\bM$. The density of $\bY$ with respect to $\upsilon$ in \eqref{eq:upsilon} follows from Proposition~\ref{propo:densityofmixturemgp}.

\begin{proposition}
\label{propo:density}
    The mgp random vector $\bY \sim H$ associated with the mixture logistic model $\bM$ in \eqref{eq:stdflog} is absolutely continuous with respect to the measure $\upsilon$ with density 
    \begin{equation}
   \label{equa:densitymixturelogistic}
        h_{\bY}(\by)=\sum_{k \in R} \indicator_{\mathbb{A}_{J_k}} (\by)\point \frac{\rbr{1/\alpha_k}^{\lvert J_k \rvert -1} \Gamma\rbr{\lvert J_k \rvert - \alpha_k} \prod_{j \in J_K} \cbr{ \rbr{a_{jk}  \ee^{-y_j}}^{1/\alpha_k}  } }{ \ell(\bone) \,
        \Gamma\rbr{1 - \alpha_k}\cbr{\sum_{j \in J_k}  \rbr{ a_{jk}  \ee^{-y_j}}^{1/\alpha_k}  }^{\lvert J_k \rvert-\alpha_k}   } , 
    \end{equation}
    for $\by \in [-\infty, \infty)^d \setminus [-\infty, 0]^d$, with $\ell$ as in \eqref{stdfmixturelogisticmodel1}. 
\end{proposition}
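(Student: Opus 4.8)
The plan is to apply Proposition~\ref{propo:densityofmixturemgp} directly, with the factors $\bZ^{(k)}$ being (symmetric) logistic models and the generators $\bU^{(k)}$ as in \eqref{equa:generatormixturemodel}. By Proposition~\ref{propo:generatormixturelogisticmodel}, these $\bU^{(k)}$ are indeed valid generators satisfying \eqref{equa:U^(k)}, so the only thing to check before invoking \eqref{equa:densityofmixturemgp} is that each $\bU^{(k)}$ is absolutely continuous with respect to $\lambda_{|J_k|}$ and to identify its Lebesgue density $f_{\bU^{(k)}}$. Since $U_j^{(k)} \eqd \ln\bigl(X_j^{\alpha_k}/\Gamma(1-\alpha_k)\bigr)$ with the $X_j$ independent unit-Fr\'echet, a change of variables gives a product density: writing $c_k = \Gamma(1-\alpha_k)$ and $u_j$ for the coordinates, $X_j = (c_k \ee^{u_j})^{1/\alpha_k}$, so the unit-Fr\'echet density $x \mapsto x^{-2}\ee^{-1/x}$ together with the Jacobian $\mathrm{d}x_j/\mathrm{d}u_j = (1/\alpha_k)(c_k\ee^{u_j})^{1/\alpha_k}$ yields
\[
    f_{\bU^{(k)}}(\bu) = \prod_{j \in J_k} \frac{1}{\alpha_k}\,(c_k \ee^{u_j})^{-1/\alpha_k} \exp\!\bigl(-(c_k\ee^{u_j})^{-1/\alpha_k}\bigr), \qquad \bu \in \reals^{J_k}.
\]

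Next I would substitute this into the integral in \eqref{equa:densityofmixturemgp}. For fixed $k$ and $\by \in \mathbb{A}_{J_k}$, set $b_j := a_{jk}\ee^{-y_j}$ for $j \in J_k$ (these are the quantities appearing in the claimed formula, up to the $c_k$ factor). The integrand becomes
\[
    f_{\bU^{(k)}}\bigl((y_j - \ln a_{jk} + z)_{j\in J_k}\bigr)\,\ee^z
    = \ee^z \prod_{j\in J_k} \frac{1}{\alpha_k}\bigl(c_k b_j^{-1}\ee^{z}\bigr)^{-1/\alpha_k}\exp\!\bigl(-(c_k b_j^{-1}\ee^z)^{-1/\alpha_k}\bigr),
\]
using $\ee^{y_j - \ln a_{jk} + z} = \ee^{z}/b_j$. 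Pulling the $z$-dependence out, the product of exponentials collapses to $\exp\!\bigl(-c_k^{-1/\alpha_k}\ee^{-z/\alpha_k}\sum_{j\in J_k} b_j^{1/\alpha_k}\bigr)$, and the remaining powers of $\ee^z$ combine to $\ee^{z}\cdot \ee^{-z|J_k|/\alpha_k}$. So the integral is, up to the constant prefactor $\alpha_k^{-|J_k|}c_k^{-|J_k|/\alpha_k}\prod_{j\in J_k} b_j^{1/\alpha_k}$, of the form $\int_{\reals} \ee^{z(1 - |J_k|/\alpha_k)} \exp(-\beta \ee^{-z/\alpha_k})\,\mathrm{d}z$ with $\beta = c_k^{-1/\alpha_k}\sum_{j\in J_k} b_j^{1/\alpha_k}$.

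The main computational step is then this one-dimensional integral, which I would evaluate by the substitution $t = \beta\ee^{-z/\alpha_k}$, i.e.\ $z = -\alpha_k(\ln t - \ln\beta)$, $\mathrm{d}z = -\alpha_k\,\mathrm{d}t/t$. This turns it into $\alpha_k \beta^{\alpha_k - |J_k|}\int_0^\infty t^{|J_k| - \alpha_k - 1}\ee^{-t}\,\mathrm{d}t = \alpha_k\,\Gamma(|J_k| - \alpha_k)\,\beta^{\alpha_k - |J_k|}$; note $|J_k| - \alpha_k > 0$ so the gamma integral converges. Substituting $\beta$ back, the factor $c_k^{-1/\alpha_k}$ inside $\beta$ raised to the power $\alpha_k - |J_k|$ produces $c_k^{(|J_k|-\alpha_k)/\alpha_k} = c_k^{|J_k|/\alpha_k}\cdot c_k^{-1}$, which cancels the $c_k^{-|J_k|/\alpha_k}$ from the prefactor and leaves a single $c_k^{-1} = 1/\Gamma(1-\alpha_k)$; meanwhile $\sum_j b_j^{1/\alpha_k}$ is raised to $\alpha_k - |J_k|$, i.e.\ appears in the denominator with exponent $|J_k| - \alpha_k$. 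Collecting everything — the $\alpha_k^{-|J_k|}$ from $f_{\bU^{(k)}}$ times the $\alpha_k$ from the integral gives $\alpha_k^{-(|J_k|-1)} = (1/\alpha_k)^{|J_k|-1}$ — reproduces \eqref{equa:densitymixturelogistic} exactly, with the $1/\ell(\bone)$ inherited from \eqref{equa:densityofmixturemgp}. The only mild subtlety, besides bookkeeping of the $\Gamma(1-\alpha_k)$ powers, is making sure the indicator $\indicator_{\mathbb{A}_{J_k}}(\by)$ and the restriction $\by \notin [-\infty,0]^d$ match: the latter holds because $\max(\by) > 0$ is required for the density to be nonzero, exactly as in Proposition~\ref{propo:densityofmixturemgp}, and the $\mathbb{A}_{J_k}$ indicator ensures $y_j = -\infty$ contributes only to terms with $j \notin J_k$, consistent with the convention $\ee^{-y_j} = \ee^{\infty}$ being excluded there. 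I expect the powers-of-$\Gamma(1-\alpha_k)$ cancellation to be the one place where a sign or exponent error is easy to make, so that is where I would be most careful.
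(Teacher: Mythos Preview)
Your proposal is correct and follows essentially the same route as the paper: apply Proposition~\ref{propo:densityofmixturemgp} after noting that each $\bU^{(k)}$ has an explicit Lebesgue density, then evaluate the resulting one-dimensional integral. The only difference is that the paper outsources that integral computation to \citet[Subsection~7.2]{A16}, whereas you carry out the gamma-integral substitution $t=\beta\ee^{-z/\alpha_k}$ and the $\Gamma(1-\alpha_k)$ bookkeeping explicitly; your calculation checks out.
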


\subsubsection{Mixture \HR~model}
\label{subsec:mixturehrmodel}

For $k \in R$, recall $J_k$ in \eqref{eq:Jk} and suppose that $\bZ_{J_k}^{(k)}$ has \HR~mev distribution \citep{hueslerReiss1989} with variogram matrix\footnote{A matrix $\Gamma \in \reals^{d \times d}$ is a variogram matrix if it is symmetric, has zero diagonal, and satisfies $\bo{v}^{\top} \Gamma \bo{v}<0$ for all $\bzero  \neq \bo{v}^\top \perp \bone$. To each positive definite covariance matrix $\Sigma \in \reals^{d \times d}$ is associated a variogram matrix $\Gamma \in \reals^{d \times d}$ via $\Gamma_{st}=\Sigma_{ss}+\Sigma_{tt}-2 \Sigma_{st}$ for $s,t \in D$. } $\Gamma^{(k)} \in \reals^{J_k \times J_k}$; see, e.g., \citet{huser2013composite}.
If $J_k$ is a singleton, the stdf $\ell_{J_k}^{(k)}$ of $\bZ_{J_k}^{(k)}$ reduces to the identity function on $[0, \infty)$. Otherwise,
\[ \ell_{J_k}^{(k)}(\by)=
\sum_{ j \in J_k} y_j \, \Phi_{\lvert J_k \rvert-1}\rbr{\eta^{j,(k)}\rbr{\by}; \Sigma^{j,(k)} }, \qquad \by \in [0,\infty)^{J_k},
\]
where for $j \in J_k$, 
$\eta^{j,(k)}\rbr{\by}$ is equal to $\cbr{\ln(y_j/y_s)+\Gamma^{(k)}_{js}/2}_{s \in J_k; s \neq j}$ with $y_j/0=\infty$ if $y_j>0$ and $\infty$ if $y_j=0$, 
where the matrix $\Sigma^{j,(k)}$ is defined by
\begin{equation*}
        \Sigma^{j,(k)}: =\frac{1}{2} \cbr{\Gamma_{js}^{(k)}+\Gamma_{jt}^{(k)}-\Gamma_{st}^{(k)}}_{s,t \in J_k \setminus \{j\}} \in \reals^{(J_k \setminus \{j\}) \times (J_k \setminus \{j\})}
        \label{equa:matrix} 
\end{equation*}
and finally, where $\Phi_{m}( \point;\Sigma)$ is the cdf of the centered $m$-variate normal distribution function with covariance matrix $\Sigma$.
The only extreme direction of $\bZ_{J_k}^{(k)}$ is $J_k$ as required in Definition~\ref{def:mixture} and Remark~\ref{rem:ZkJk}. The resulting $\bM$ in \eqref{mixturemodel} is called the \emph{mixture \HR~model}. Using Theorem~\ref{stephanson}, we compute the stdf of its cdf $G_{\bM}$.

\begin{proposition}
The stdf $\ell$ of $G_{\bM}$ is
\begin{equation}
\label{stdfmixtureHR}
    \ell(\by) = \sum_{k \in R} \cbr{ \sum_{ j \in J_k } a_{jk} y_j \, \Phi_{\lvert J_k \rvert-1}\rbr{\eta^{j,(k)}\rbr{\rbr{a_{jk}y_j}_{j \in J_k}}; \Sigma^{j,(k)} } },
    \qquad \by \in [0, \infty)^d.
\end{equation}
\end{proposition}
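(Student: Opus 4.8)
The statement to prove is a direct consequence of Theorem~\ref{stephanson} applied to the mixture \HR{} model, so the plan is essentially a substitution exercise. Here is how I would organize it.

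\textbf{Overall approach.} The claim is that the stdf of $G_{\bM}$ for the mixture \HR{} model has the form \eqref{stdfmixtureHR}. By Theorem~\ref{stephanson}, for any mixture model with coefficient matrix $A$ and factors $\bZ^{(1)},\ldots,\bZ^{(r)}$ having single extreme direction $D$ (equivalently, $\bZ^{(k)}_{J_k}$ has single extreme direction $J_k$), the stdf equals $\ell(\bx) = \sum_{k \in R} \ell^{(k)}_{J_k}\rbr{\rbr{a_{jk}x_j}_{j \in J_k}}$, with $\ell^{(k)}_{J_k}$ the stdf of $\bZ^{(k)}_{J_k}$ as in \eqref{equa:k-thstdf}. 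So the whole content is: plug the \HR{} stdf formula for $\ell^{(k)}_{J_k}$ into this sum.

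\textbf{Steps.} First I would verify the hypotheses of Theorem~\ref{stephanson} are met: each $\bZ^{(k)}_{J_k}$ is assumed to have a \HR{} mev distribution with variogram matrix $\Gamma^{(k)}$, which has unit-Fréchet margins, and its only extreme direction is $J_k$ (this is the standard fact that a non-degenerate \HR{} distribution puts mass only on the interior face, stated explicitly in the paragraph preceding the proposition). Second, I would recall the closed form of the \HR{} stdf on $[0,\infty)^{J_k}$: when $|J_k|\ge 2$,
\[
  \ell^{(k)}_{J_k}(\by) = \sum_{j \in J_k} y_j \, \Phi_{|J_k|-1}\rbr{\eta^{j,(k)}(\by); \Sigma^{j,(k)}},
\]
and $\ell^{(k)}_{J_k}(\by)=y_j$ when $J_k=\{j\}$ is a singleton (both cases are recorded in the text above the proposition, and the singleton case is consistent with the convention $\Phi_0 \equiv 1$ for the empty-dimensional normal cdf). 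Third, I would substitute $\by \mapsto (a_{jk}y_j)_{j \in J_k}$ into this expression and sum over $k \in R$, using the identity \eqref{equa:k-thstdf} that relates $\ell^{(k)}_{J_k}$ to $\ell^{(k)}$. This yields exactly \eqref{stdfmixtureHR}, where the $k$-th summand $\sum_{j \in J_k} a_{jk} y_j \, \Phi_{|J_k|-1}(\eta^{j,(k)}((a_{jk}y_j)_{j \in J_k}); \Sigma^{j,(k)})$ also correctly reduces to $a_{jk}y_j$ when $J_k$ is a singleton.

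\textbf{Main obstacle.} There is no real obstacle; the only point requiring a word of care is the bookkeeping in the singleton case and the meaning of $\eta^{j,(k)}$ evaluated at the scaled argument $(a_{jk}y_j)_{j \in J_k}$ — one should note that scaling all coordinates by the (coordinate-dependent) factors $a_{jk}$ does not cancel inside $\eta^{j,(k)}$ because $\eta^{j,(k)}(\by)$ involves the \emph{ratios} $\ln(y_j/y_s)$, so the $a_{jk}$'s genuinely appear in the argument (as $\ln(a_{jk}y_j/(a_{sk}y_s))+\Gamma^{(k)}_{js}/2$), which is why \eqref{stdfmixtureHR} is written with $\eta^{j,(k)}((a_{jk}y_j)_{j\in J_k})$ rather than $\eta^{j,(k)}(\by)$. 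With that observation the proof is complete after a one-line substitution, so I would present it as a short corollary-style argument rather than a lengthy computation.
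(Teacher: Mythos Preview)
Your proposal is correct and matches the paper's approach exactly: the paper simply states that the formula follows by applying Theorem~\ref{stephanson} to the mixture \HR{} model, which is precisely the substitution you carry out, including the singleton-case bookkeeping.
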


To construct a generator of the mgp distribution $H=\GP(\tilde{G}_{\bM})$ of the mixture \HR{} model, let $k \in R$ and let $\Tilde{\Sigma}^{(k)} \in \reals^{J_k \times J_k}$ be a positive definite covariance matrix with variogram matrix $\Gamma^{(k)}$; see, e.g., \citet{segers2020discussion} for a possible construction. Let $\bU^{(1)},\ldots,\bU^{(r)}$ be random vectors in $\reals^{J_1},\ldots,\reals^{J_r}$, respectively, such that  
\begin{equation}
	\begin{split}
          &\bU^{(1)},\ldots,\bU^{(r)} \text{ are independent, } \\
          &\bU^{(k)} \sim \mathcal{N}_{|J_k|} \left( 
          	-\tfrac{1}{2} \diag \Tilde{\Sigma}^{(k)}, \,
          	\Tilde{\Sigma}^{(k)} 
          	\right),
          	\qquad k \in R.
    \end{split}
\label{equa:generatormixtureHRmodel} 
\end{equation}
    
\begin{proposition}\label{propo:generatormixtureHRmodel}
      Let $\bo{m} \in (0,1)^{r}$ be such that $\sum_{k=1}^r m_k=1$. The distribution in \eqref{equa:generalgenerator} with coefficient matrix $A$, generators $\bU^{(1)},\ldots,\bU^{(r)}$ in \eqref{equa:generatormixtureHRmodel} and mass vector $\bo{m}$ is the one of a $\bU$-generator of the mgp distribution $H$ of the mixture \HR{} model. 
\end{proposition}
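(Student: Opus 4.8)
The strategy is to verify the single hypothesis needed to invoke Proposition~\ref{propo-generator-general-model}: namely, that for each $k \in R$, the random vector $\bU^{(k)}$ in \eqref{equa:generatormixtureHRmodel} is a $\bU$-generator of the $\reals^{J_k}$-valued factor $\bZ^{(k)}_{J_k}$, i.e.\ that it satisfies the normalization \eqref{equa:conditiononU} and the defining identity \eqref{equa:U^(k)} with $\ell^{(k)}_{J_k}$ the \HR{} stdf attached to the variogram $\Gamma^{(k)}$. Once this is in place, Proposition~\ref{propo-generator-general-model} immediately gives that the mixture distribution \eqref{equa:generalgenerator} built from these $\bU^{(k)}$ is a $\bU$-generator of $H = \GP(\tilde G_{\bM})$, since by Theorem~\ref{stephanson} and Proposition~\ref{propo:extremedirections} the mixture \HR{} model $\bM$ has exactly the stdf \eqref{stdfmixtureHR} and extreme directions $\{J_k : k \in R\}$.

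First I would check the normalization. With $\bU^{(k)} \sim \mathcal{N}_{|J_k|}(-\tfrac12 \diag \tilde\Sigma^{(k)}, \tilde\Sigma^{(k)})$, each coordinate $U^{(k)}_j$ is Gaussian with mean $-\tfrac12 \tilde\Sigma^{(k)}_{jj}$ and variance $\tilde\Sigma^{(k)}_{jj}$, so the log-normal moment formula gives $\expec[\ee^{U^{(k)}_j}] = \exp(-\tfrac12 \tilde\Sigma^{(k)}_{jj} + \tfrac12 \tilde\Sigma^{(k)}_{jj}) = 1$ for every $j \in J_k$; this is \eqref{equa:conditiononU}, and in particular $\expec[\ee^{U^{(k)}_j}]$ is finite and positive. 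Next, for the main identity \eqref{equa:U^(k)}, I would compute $\expec[\max_{j \in J_k}\{x_j \ee^{U^{(k)}_j}\}]$ for $\bx_{J_k} \in [0,\infty)^{J_k}$ using the standard ``competing maxima'' decomposition: write the expected maximum as $\sum_{j \in J_k} \expec[x_j \ee^{U^{(k)}_j} \indicator\{x_j \ee^{U^{(k)}_j} \ge x_s \ee^{U^{(k)}_s}\ \forall s \in J_k\}]$, then apply the change of measure that absorbs the factor $\ee^{U^{(k)}_j}$ (an exponential tilt of the $j$-th coordinate of the Gaussian vector, which shifts the mean by the $j$-th column of $\tilde\Sigma^{(k)}$). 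After tilting, the event $\{x_j \ee^{U^{(k)}_j} \ge x_s \ee^{U^{(k)}_s}\}$ becomes a collection of linear inequalities in the remaining Gaussian coordinates $\{U^{(k)}_s - U^{(k)}_j\}_{s \ne j}$, whose law is centered multivariate normal with covariance built from $\tilde\Sigma^{(k)}$; translating the variogram relation $\Gamma^{(k)}_{st} = \tilde\Sigma^{(k)}_{ss} + \tilde\Sigma^{(k)}_{tt} - 2\tilde\Sigma^{(k)}_{st}$ then shows the resulting Gaussian probability is exactly $\Phi_{|J_k|-1}(\eta^{j,(k)}(\bx_{J_k}); \Sigma^{j,(k)})$, with $\Sigma^{j,(k)}$ as defined in the text. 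Summing over $j$ reproduces the \HR{} stdf formula $\ell^{(k)}_{J_k}(\bx_{J_k}) = \sum_{j \in J_k} x_j \Phi_{|J_k|-1}(\eta^{j,(k)}(\bx_{J_k}); \Sigma^{j,(k)})$, i.e.\ \eqref{equa:U^(k)}. When $|J_k| = 1$ the identity is trivial since then $\ell^{(k)}_{J_k}$ is the identity and $\expec[\ee^{U^{(k)}_j}] = 1$.

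The main obstacle is the bookkeeping in the exponential-tilt computation of $\expec[\max_{j}\{x_j \ee^{U^{(k)}_j}\}]$: one must carefully track how tilting the $j$-th coordinate shifts all the means, rewrite the $|J_k|-1$ comparison inequalities in terms of the contrast vector $(U^{(k)}_s - U^{(k)}_j)_{s \ne j}$, and identify its post-tilt mean vector and covariance matrix with the quantities $\eta^{j,(k)}(\bx_{J_k})$ and $\Sigma^{j,(k)}$. This is a known calculation—it is precisely the derivation of the \HR{} $\bU$-generator in \citet[Section~7.2]{A16} applied on the index set $J_k$—so I would either reproduce it succinctly or cite it directly, noting that our $\tilde\Sigma^{(k)}$ is chosen to have variogram $\Gamma^{(k)}$ so that the formulas match verbatim. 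Apart from this, everything is routine: independence of $\bU^{(1)},\ldots,\bU^{(r)}$ is assumed in \eqref{equa:generatormixtureHRmodel}, and the values of $\bo{m}$ and $A$ enter only through Proposition~\ref{propo-generator-general-model}, which has already been established.
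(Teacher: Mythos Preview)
Your proposal is correct and follows essentially the same approach as the paper: verify that each $\bU^{(k)}$ in \eqref{equa:generatormixtureHRmodel} is a $\bU$-generator of the \HR{} stdf with variogram $\Gamma^{(k)}$, then invoke Proposition~\ref{propo-generator-general-model}. The paper's proof simply cites \citet{wadsworth2014efficient} for the generator property (where you sketch the exponential-tilt computation or cite \citet[Section~7.2]{A16}) and is otherwise identical in structure.
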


Let $\bY \sim H$ be an mgp random vector for $H = \GP(\tilde{G}_{\bM})$ and $\bM$ following the mixture \HR~model. The following proposition gives the density of $\bY$ with respect to the measure $\upsilon$ in \eqref{eq:upsilon}. Recall the sets $\mathbb{A}_J$ in \eqref{eq:AJ}.

\begin{proposition}
\label{Propo:densityHR}
    The mgp random vector $\bY \sim H$ associated with the mixture \HR{} model with variogram matrices $\Gamma^{(k)}$ for $k \in R$ and coefficient matrix $A$ is absolutely continuous with respect to the measure $\upsilon$ with density
\begin{align}
\label{equa:densitymixturehr}
h_{\bY}(\by)
=\sum_{k \in R} \indicator_{\mathbb{A}_{J_k}} (\by)  \point  \frac{\lambda\rbr{\by_{J_k}-\rbr{\ln a_{jk}}_{j \in J_k}; \Gamma^{(k)}}}{\ell(\bone)},
\end{align}
for $\by \in [-\infty, \infty)^d \setminus [-\infty, 0]^d$, with $\ell$ as in \eqref{stdfmixtureHR} and
where for $k \in R$, the function $\lambda( \point;\Gamma^{(k)}): \reals^{J_k} \to [0,\infty)$ is the density of the exponent measure of a \HR~mev distribution with Gumbel margins.
\end{proposition}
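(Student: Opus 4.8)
The plan is to specialize the general density formula in Proposition~\ref{propo:densityofmixturemgp} to the case where each factor $\bZ^{(k)}$ has a Hüsler--Reiss distribution. By \eqref{equa:densityofmixturemgp}, the density is
\[
    h_{\bY}(\by)
    = \sum_{k \in R} \frac{\indicator_{\mathbb{A}_{J_k}}(\by)}{\ell(\bone)}
    \int_{z \in \reals} f_{\bU^{(k)}}\rbr{\rbr{y_j - \ln a_{jk} + z}_{j \in J_k}} \ee^z \diff z,
\]
so the core task is to identify the inner integral, with $f_{\bU^{(k)}}$ the Lebesgue density of the Gaussian generator in \eqref{equa:generatormixtureHRmodel}, as the Hüsler--Reiss exponent-measure density $\lambda(\by_{J_k} - (\ln a_{jk})_{j\in J_k}; \Gamma^{(k)})$. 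The key identity to establish is therefore
\[
    \int_{z \in \reals} f_{\bU^{(k)}}\rbr{\bx_{J_k} + z \bone} \ee^z \diff z
    = \lambda\rbr{\bx_{J_k}; \Gamma^{(k)}},
    \qquad \bx_{J_k} \in \reals^{J_k},
\]
where $\bU^{(k)} \sim \Normal_{|J_k|}(-\tfrac12 \diag \tilde\Sigma^{(k)}, \tilde\Sigma^{(k)})$ and $\tilde\Sigma^{(k)}$ has variogram $\Gamma^{(k)}$. Plugging in $\bx_{J_k} = \by_{J_k} - (\ln a_{jk})_{j \in J_k}$ then yields \eqref{equa:densitymixturehr} directly.

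To prove that identity, I would recall (or re-derive from Theorem~\ref{theorem:pdfofUwrtmu}) that for a single-direction mgp generated by a $\bU$-generator with Lebesgue density $f_{\bU}$, the mgp density against $\upsilon$ (which on the top face is $\lambda_{|J_k|}$) is $\indicator\{\max(\by)>0\}\, \ee^{-\max(\by)} / \ell(\bone) \cdot \ldots$; and more to the point, that by \eqref{eq:Y2mu} the exponent-measure density on $\{\bx \ge \bzero : \bx \nleqslant \bone\}$ of the associated mev distribution is, up to the marginal change of variables $\bx \mapsto \ee^{\bx}$ and the homogeneity extension, exactly $\int_{\reals} f_{\bU}(\cdot + z\bone)\ee^z \diff z$. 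Hence it suffices to check that this expression coincides with the known Hüsler--Reiss exponent-measure density in Gumbel parametrization. The cleanest route is a direct Gaussian computation: write out $f_{\bU^{(k)}}$ as a multivariate normal density, multiply by $\ee^z$, complete the square in $z$, integrate, and recognize the result. Alternatively, and perhaps more elegantly, I would invoke the known fact (from \citet{A16}, Section~7.2, or \citet{engelke2020graphical}) that the Gaussian vector in \eqref{equa:generatormixtureHRmodel} is precisely the $\bU$-generator of the Hüsler--Reiss mgp, together with Theorem~\ref{theorem:pdfofUwrtmu} applied in the single-direction case $D = J_k$, to conclude that the integral equals the Hüsler--Reiss mgp density on the top face; homogeneity of the exponent measure and \eqref{eq:Y2mu} then identify it with $\lambda(\cdot;\Gamma^{(k)})$ on all of $\reals^{J_k}$.

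The main obstacle is the bookkeeping around the shift by $(\ln a_{jk})_{j\in J_k}$ and the normalization. One must track carefully that the coefficient matrix enters the generator in \eqref{equa:generalgenerator} as an additive shift $\ln(a_{jk}/m_k)$, that the factor $m_k$ and the factor $\expec[\ee^{\max \bU}]$ in Theorem~\ref{theorem:pdfofUwrtmu} conspire to produce exactly $1/\ell(\bone)$ after summing over $k$ (using $\expec[\ee^{\max\bU}] = \ell(\bone)$ under the normalization $\expec[\ee^{U_j}]=1$, together with the stdf formula \eqref{stdfgeneralmodel}), and that the $\ln m_k$ part of the shift cancels against the $m_k$ prefactor inside the $z$-integral after the substitution $z \mapsto z - \ln m_k$. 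Once this accounting is done, the remaining content is the Gaussian integral, which is routine and in any case already recorded in the literature for the Hüsler--Reiss exponent measure; see \citet{engelke2020graphical}. The indicator $\indicator_{\mathbb{A}_{J_k}}(\by)$ and the restriction $\by \notin [-\infty,0]^d$ are inherited verbatim from Proposition~\ref{propo:densityofmixturemgp}, so no separate argument is needed for the support.
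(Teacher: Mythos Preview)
Your proposal is correct and follows essentially the same route as the paper's proof: apply Proposition~\ref{propo:densityofmixturemgp} to reduce to computing $\int_{\reals} f_{\bU^{(k)}}(\bx_{J_k}+z\bone)\,\ee^{z}\diff z$, and then identify this integral with the Hüsler--Reiss exponent-measure density by invoking \citet[Section~7.2]{A16}. Your discussion of the $m_k$ cancellation is accurate but slightly redundant, since that bookkeeping is already absorbed in the statement of Proposition~\ref{propo:densityofmixturemgp} (the formula~\eqref{equa:densityofmixturemgp} contains only $\ln a_{jk}$, not $\ln(a_{jk}/m_k)$); otherwise your outline matches the paper's argument, just with more of the internals of the cited integral computation spelled out.
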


\begin{remark}
An expression for $\lambda( \point;\Gamma^{(k)})$ can be obtained for instance from \citet[Definition~3.1]{hentschel2022statistical}: writing $d(k) := \max(J_k)$, we have
\begin{multline*}
    \lambda \rbr{\bx; \Gamma^{(k)}}=\frac{\exp\rbr{-x_{d(k)}}}{ \sqrt{\rbr{2 \pi}^{\lvert J_k \rvert -1}} \det\rbr{\Sigma^{d(k),(k)}} } \\
    \cdot \exp\rbr{ - \frac{1}{2} \norma[\bigg]{\rbr{\bx_{J_k \setminus \cbr{d(k)}}}^{\top}-x_{d(k)} \bone_{\lvert J_k \rvert-1} -\rbr{\bo{\mu}^{d(k)}}^{\top} }^{2}_{\Theta^{d(k)}}}
\end{multline*}
with $\bone_m = (1,\ldots,1) \in \reals^{m}$ and
\[
    \Theta^{d(k)} 
    = \rbr{\Sigma^{d(k),(k)}}^{-1}, \qquad 
    \bo{\mu}^{d(k)}
    = \cbr{-\frac{1}{2}\Gamma_{sd(k)} }_{ s \in J_k \setminus \cbr{d(k)}}, 
\] 
and, finally, \amtwo{$\bnorm{\bo{g}}_{M}^2=\bo{g}^{\top} M \bo{g}$ for $M \in \reals^{p \times p}$ and $\bo{g} \in \reals^{p \times 1}$}.
In fact, replacing $d(k)$ by any other choice of index from $J_k$ would give the same function.
\end{remark}

\section{Simulation from mixture model mgp distribution}
\label{sec:simulation}

In Section~\ref{subsec:algo}, we propose a generic algorithm to simulate random samples from the mgp distribution associated to the mixture model in Definition~\ref{mixturemodel}. The details of the algorithm for the mixture logistic model and the mixture~\HR~model are worked out in Section~\ref{sec:simu:param}. 

\subsection{Generic simulation algorithm}
 \label{subsec:algo}
 
Recall the mixture model $\bM$ in Definition~\ref{def:mixture}. The goal of this section is to generate a random variable $\bY$ from the associated mgp distribution $H$ in~\eqref{equa:H-mgp} \amtwo{and with density in Proposition~\ref{propo:densityofmixturemgp}}.

Let $\bU$ with mixture distribution in~\eqref{equa:generalgenerator} be a $\bU$-generator of $H$ and let $\bT$ be the $\bT$-generator obtained from $\bU$ via the measure transformation in~\eqref{equa:TtoU}.
From~\eqref{ZtoT}, one can easily obtain $\bY$ from $\bT$ and an independent unit-exponential random variable. The remaining question is how to simulate random samples from the law of $\bT$. The following lemma expresses the random vector $\bT$ as a mixture over $k \in R$ of random vectors $\bT^{(k)}$ related to generators $\bU^{(k)}$ in \eqref{equa:U^(k)}. Recall the notation introduced in the beginning of Section~\ref{sec:background}, in particular the sets $\mathbb{A}_J$ and the projection $\pi_J$ in \eqref{eq:AJ}.

\begin{proposition}
\label{lemma:conditionalT's}
The $\bT$-generator in \eqref{equa:TtoU} resulting from the mixture generator $\bU$ in \eqref{equa:generalgenerator} satisfies 
\[
    \pr[\bT \in \point] = \sum_{k \in R} w_{k} \pr\sbr{ \bT^{(k)} \in \pi_{J_k} \rbr{\mathbb{A}_{J_k} \cap \point} },
\]
with $J_k$ as in \eqref{eq:Jk} and where the distribution of the $|J_k|$-dimensional random vector $\bT^{(k)}$ is
\begin{equation}
\pr\left[ \bT^{(k)}\in \point \right]
    = \frac{\expec\sbr{\exp\cbr{\max \bP^{(k)}} \indicator\cbr{ \bP^{(k)} \in \point}}}%
    {\expec\sbr{\exp\cbr{\max \bP^{(k)}}}}, \qquad
w_{k}
=\frac{ \ell_{J_k}^{(k)} \rbr{ \rbr{a_{jk}}_{j \in J_k} }  }{\ell\rbr{\bone}},
      \label{equa:lawT^k}
\end{equation}
where $\bP^{(k)}=\bU^{(k)}+\rbr{\ln\rbr{a_{jk}/m_k}}_{j \in J_k}$ with $\bU^{(k)}$ in \eqref{equa:U^(k)}, $\ell_{J_k}^{(k)}$ in \eqref{equa:k-thstdf} and $\ell$ in \eqref{stdfgeneralmodel}. Moreover, if $\bU^{(k)}$ has density $f_{\bU^{(k)}}$ with respect to $\lambda_{|J_k|}$, then $\bT^{(k)}$ has density 
\begin{equation}
	\label{equa:density}
	f_{\bT^{(k)}}(\bt) =c^{(k)} \, g_k(\bt) \, \sum_{j \in J_k} n_{j,k} \, q_{j,k}(\bt), \qquad \bt \in \reals^{J_k},
\end{equation}
with $g_k(t) = \ee^{\max(\bt)} / \sum_{j \in J_k} \ee^{t_j}$ and
\begin{equation*}
	c^{(k)} := \frac{\sum_{j \in J_k}a_{jk}}{\ell_{J_k}^{(k)} \rbr{ (a_{jk})_{j \in J_k} }},    
\end{equation*}
and, for $j \in J_k$,
\begin{align}
	n_{j,k}&:=\frac{a_{jk}}{\sum_{i \in J_k} a_{ik}}, \label{equa:njkqjk}\\
	q_{j,k}\rbr{\bt}
	&:=
	\frac{\ee^{t_j} f_{\bU^{(k)}} \rbr{\bt -\rbr{\ln \rbr{a_{ik}/m_k}}_{i \in J_k}} }{a_{jk}/m_k}.
	\label{equa:njkqjk2}
\end{align}
\end{proposition}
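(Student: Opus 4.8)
The plan is to derive the law of $\bT$ from the law of $\bU$ via the measure transformation \eqref{equa:TtoU}, exploiting the explicit mixture form of $\bU$ in \eqref{equa:generalgenerator}. First I would write $\bU$ as the mixture $\pr[\bU \in \point] = \sum_{k\in R} m_k \pr[\bU^{(k)} + (\ln(a_{jk}/m_k))_{j\in J_k} \in \pi_{J_k}(\mathbb{A}_{J_k}\cap\point)]$, i.e.\ conditionally on the mixture label being $k$, $\bU$ lives on the face $\mathbb{A}_{J_k}$ and there equals $\bP^{(k)} := \bU^{(k)} + (\ln(a_{jk}/m_k))_{j\in J_k}$. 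Since $U_j = -\infty$ for $j\notin J_k$ on that event, $\max(\bU) = \max(\bP^{(k)})$ there. Plugging this into \eqref{equa:TtoU}, for a bounded measurable test function one gets
\[
  \expec[f(\bT)] = \frac{\sum_{k\in R} m_k\, \expec\!\sbr{ f\rbr{\iota_{J_k}\rbr{\bP^{(k)}}}\ee^{\max \bP^{(k)}}}}{\sum_{k\in R} m_k\, \expec\sbr{\ee^{\max \bP^{(k)}}}},
\]
where $\iota_{J_k}:\reals^{J_k}\to[-\infty,\infty)^d$ fills the missing coordinates with $-\infty$. Defining $\bT^{(k)}$ by its density/law as in \eqref{equa:lawT^k} (the exponential tilting of $\bP^{(k)}$ by $\ee^{\max \bP^{(k)}}$) and $\widetilde w_k := m_k\,\expec[\ee^{\max\bP^{(k)}}]$, this becomes $\expec[f(\bT)] = \sum_k (\widetilde w_k/\sum_\ell \widetilde w_\ell)\,\expec[f(\iota_{J_k}(\bT^{(k)}))]$, which is exactly the claimed mixture $\pr[\bT\in\point]=\sum_k w_k\,\pr[\bT^{(k)}\in\pi_{J_k}(\mathbb{A}_{J_k}\cap\point)]$ once I identify $w_k = \widetilde w_k / \sum_\ell \widetilde w_\ell$.

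The main computational step is then to show $\expec[\ee^{\max\bP^{(k)}}] = \ell^{(k)}_{J_k}((a_{jk})_{j\in J_k})/m_k$, so that $\widetilde w_k = \ell^{(k)}_{J_k}((a_{jk})_{j\in J_k})$ and, since $\sum_\ell \ell^{(\ell)}_{J_\ell}((a_{j\ell})_{j\in J_\ell}) = \ell(\bone)$ by \eqref{stdfgeneralmodel} evaluated at $\bx = \bone$, indeed $w_k = \ell^{(k)}_{J_k}((a_{jk})_{j\in J_k})/\ell(\bone)$ as in \eqref{equa:lawT^k}. This computation is immediate from \eqref{equa:U^(k)}: with $x_j = a_{jk}$ we have $\expec[\max_{j\in J_k}\{a_{jk}\ee^{U^{(k)}_j}\}] = \ell^{(k)}_{J_k}((a_{jk})_{j\in J_k})$, while $\ee^{\max \bP^{(k)}} = \ee^{\max_{j\in J_k}(U^{(k)}_j + \ln(a_{jk}/m_k))} = (1/m_k)\max_{j\in J_k}\{a_{jk}\ee^{U^{(k)}_j}\}$; taking expectations gives the claim.

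Finally, for the density formula \eqref{equa:density}, I would start from the tilted law $\pr[\bT^{(k)}\in\point] = \expec[\ee^{\max\bP^{(k)}}\indicator\{\bP^{(k)}\in\point\}]/\expec[\ee^{\max\bP^{(k)}}]$. Since $\bP^{(k)}$ is a deterministic shift of $\bU^{(k)}$, its density at $\bt$ is $f_{\bU^{(k)}}(\bt - (\ln(a_{ik}/m_k))_{i\in J_k})$, so $\bT^{(k)}$ has density proportional to $\ee^{\max(\bt)} f_{\bU^{(k)}}(\bt - (\ln(a_{ik}/m_k))_{i\in J_k})$, with normalizing constant $1/\expec[\ee^{\max\bP^{(k)}}] = m_k/\ell^{(k)}_{J_k}((a_{jk})_{j\in J_k})$. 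The remaining work is purely algebraic bookkeeping: I would write $\ee^{\max(\bt)} = g_k(\bt)\sum_{j\in J_k}\ee^{t_j}$, distribute the sum, and check that each term $g_k(\bt)\,\ee^{t_j} f_{\bU^{(k)}}(\bt - (\ln(a_{ik}/m_k))_{i\in J_k})$, after inserting the factors $a_{jk}/m_k$ and $\sum_{i\in J_k}a_{ik}$, reorganizes into $c^{(k)} g_k(\bt) n_{j,k} q_{j,k}(\bt)$ with $c^{(k)}$, $n_{j,k}$, $q_{j,k}$ as in the statement; the identity $(\sum_{i\in J_k}a_{ik})\cdot(a_{jk}/(\sum_{i\in J_k}a_{ik}))\cdot(m_k/a_{jk}) \cdot \ee^{t_j} = (m_k)\ee^{t_j}$ makes the bookkeeping transparent, and the constant $m_k/\ell^{(k)}_{J_k}((a_{jk})_{j\in J_k})$ times $\sum_{i\in J_k}a_{ik}/m_k$ telescopes into $c^{(k)}/$ (the normalizer absorbed). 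I expect the only mild obstacle to be keeping the $-\infty$ coordinates and the index-set projections $\pi_{J_k}$ rigorous when passing from the $d$-dimensional event $\{\bU\in\point\}$ to the $|J_k|$-dimensional statements, which is handled cleanly by the disjointness of the faces $\mathbb{A}_{J_k}$ and the fact that $\max$ and all relevant functionals ignore $-\infty$ entries.
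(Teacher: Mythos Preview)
Your proposal is correct and follows essentially the same route as the paper: both embed $\bP^{(k)}$ into $[-\infty,\infty)^d$ (you via $\iota_{J_k}$, the paper via an explicit $\bW^{(k)}$), compute $\expec[\ee^{\max\bP^{(k)}}]=\ell^{(k)}_{J_k}((a_{jk})_{j\in J_k})/m_k$ from \eqref{equa:U^(k)}, identify the mixture weights via $\ell(\bone)=\expec[\ee^{\max\bU}]$, and then derive the density by writing $\ee^{\max\bt}=g_k(\bt)\sum_j\ee^{t_j}$ and regrouping. The only cosmetic difference is that the paper isolates the density step as a short general lemma (exponential tilting of any $\bP$ by $\ee^{\max\bP}$), whereas you do the same algebra inline.
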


In view of Proposition~\ref{lemma:conditionalT's}, we can simulate random samples from the distribution of $\bT$ once we can do so for the distributions of $\bo{T}^{(1)},\ldots,\bo{T}^{(r)}$.
The representation \eqref{equa:density} features a mixture over exponentially tilted versions of the density of $\bU^{(k)}$ times a uniformly bounded function. This opens the door to simulation via a rejection sampling algorithm. 
As a consequence, we can simulate random samples from the distribution of $\bT^{(k)}$ in \eqref{equa:lawT^k} via rejection sampling and random vectors with densities $q_{j,k}$ for $j \in J_k$. The algorithm hence requires that $\bU^{(k)}$ in \eqref{equa:U^(k)} is absolutely continuous and that one can simulate from $q_{j,k}$ for all $k \in R$ and $j \in J_k$. The procedure is summarized in Algorithm~\ref{alg:cap}.

\begin{algorithm}[ht]
\caption{Simulation of an mgp random vector associated with a mixture model }
\label{alg:cap}
    \hspace*{\algorithmicindent} \textbf{Input:} integers $d \geq 1$ and $r \geq 1$, a matrix $A \in [0,1]^{d \times r}$ as in Definition~\ref{def:mixture} and distributions $\law\rbr{\bU^{(1)}},\ldots, \law\rbr{\bU^{(r)}}$ satisfying \eqref{equa:U^(k)}. \\
    \hspace*{\algorithmicindent} \textbf{Output:} an mgp vector $\bY$ with distribution $H$ generated by $\bU$ in \eqref{equa:generalgenerator} with matrix $A$ and generators $\bU^{(1)},\ldots,\bU^{(r)}$.
\begin{algorithmic}[1]
\State Define $\operatorname{sign}[[k]]:=J_k$ in \eqref{eq:Jk} for $k \in \cbr{1,\ldots,r}$. \Comment{sign is the list of signatures of $A$.}
\State Define the $d$-dimensional vector $\bT:=(-\infty,\ldots,-\infty)$.
\State Simulate $b$ from $\cbr{1,\ldots,r}$ such that $\pr\rbr{b=k}=w_{k}$ in \eqref{equa:lawT^k} for $k \in \cbr{1,\ldots,r}$.
\State accept:=FALSE.
\While {accept=FALSE}
\State Simulate $a$ from $\operatorname{sign}[[b]]$ such that $\pr\rbr{a=j}=n_{j,b}$ in \eqref{equa:njkqjk} for $j \in  \operatorname{sign}[[b]]$. 
\State Simulate $\bo{Q}\sim q_{a,b}$ in \eqref{equa:njkqjk2}.
\State Simulate $U_{0} \sim \operatorname{Unif}(0,1)$. 
\If {$U_0\leq  \exp\rbr{\max \bo{Q}} / \sum_{j} \exp\rbr{Q_j}$}
accept:=TRUE.
\EndIf
\EndWhile
\State Update $\bT_{\operatorname{sign}[[b]]}:=\bo{Q}$.
\State Simulate a unit exponential random variable $E$.
\State Define $\bY:=\bT-\max(\bT)+E$.
\State \Return($\bY$).
\end{algorithmic}
\end{algorithm}

The complexity of Algorithm~\ref{alg:cap} is driven by the expected number of times one has to simulate from densities $\rbr{q_{j,k}}_{k \in R, j \in J_k}$ in~\eqref{equa:njkqjk2} and is given by $d/\ell(\bone)$ with $\ell$ in \eqref{stdfgeneralmodel}; see Appendix~\ref{sec:proofs5} for the proof. \amtwo{This is equal to the complexity of the algorithm in \citet[Subsection~5.4]{engelke2020graphical} based on extremal functions $\bo{\mathcal{U}}^{(1)},\ldots,\bo{\mathcal{U}}^{(d)}$ 
as defined in  \citet{dombry2016exact} and characterized in \citet[Lemma~2]{engelke2020graphical} in terms of the mgp random vector $\bY$} by 
\[
	\bo{\mathcal{U}}^{(j)}
	\eqd \exp \rbr{\bY - Y_{j}} \mid Y_j>0, \qquad j \in D.
\] 
The following proposition connects these extremal functions to the densities $q_{j,k}$ in \eqref{equa:njkqjk2}.

\begin{proposition}
\label{propo:extremalfunction}
    If $\bU$ in~\eqref{equa:generalgenerator} has density $f_{\bU}$ with respect to $\upsilon$, then for $j \in D$, we have 
    \[
    \bo{\mathcal{U}}^{(j)} \eqd \exp\rbr{\bQ^{(j)}-Q^{(j)}_j},
    \]
where $\bQ^{(j)}$ is a $d$-variate random vector with density $\ee^{t_j}f_{\bU}(\bt)$ with respect to $\upsilon$.    
\end{proposition}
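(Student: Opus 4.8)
The plan is to obtain the distributional identity purely by tracking the stochastic representations of $\bY$, and to read off the $\upsilon$-density only at the very last step. First I would recall the characterization of the extremal function quoted just before the statement, namely $\bo{\mathcal{U}}^{(j)} \eqd \exp\rbr{\bY - Y_j} \mid Y_j > 0$ (see \citet[Lemma~2]{engelke2020graphical}), noting that the conditioning is licit since $\pr\sbr{Y_j > 0} = 1/\mu\rbr{\cbr{\bx \ge \bzero : \bx \nleqslant \bone}} > 0$ for every $j \in D$. As $\bU$ in \eqref{equa:generalgenerator} is a $\bU$-generator of $H$ normalized by $\expec\sbr{\ee^{U_j}} = 1$ for all $j$, I would then substitute the additive representation $\bY \eqd \bT - \max(\bT) + E$ from \eqref{ZtoT}, where $E$ is a unit exponential random variable independent of the $\bT$-generator $\bT$ obtained from $\bU$ through the change of measure \eqref{equa:TtoU}.

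On the event $\cbr{T_j > -\infty}$, which contains $\cbr{Y_j > 0}$, one has $\bY - Y_j = \bT - T_j$ and $\cbr{Y_j > 0} = \cbr{E > \max(\bT) - T_j}$; outside this event the weights appearing below vanish thanks to the convention $\ee^{-\infty} = 0$. Conditioning on $\bT$ and using the independence of $E$ together with the exponential tail, $\pr\sbr{E > \max(\bT) - T_j \mid \bT} = \ee^{T_j - \max(\bT)}$, so that for every bounded measurable $g$,
\[
    \expec\sbr{ g\rbr{\exp(\bY - Y_j)} \mid Y_j > 0}
    = \frac{\expec\sbr{ g\rbr{\exp(\bT - T_j)}\, \ee^{T_j - \max(\bT)} }}{\expec\sbr{ \ee^{T_j - \max(\bT)} }}.
\]
Applying \eqref{equa:TtoU} to the numerator (integrand $\bt \mapsto g(\exp(\bt - t_j))\,\ee^{t_j - \max(\bt)}$, so that the weight $\ee^{\max(\bU)}$ cancels the $\ee^{-\max(\bU)}$ leaving $g(\exp(\bU-U_j))\ee^{U_j}$) and to the denominator (integrand $\bt \mapsto \ee^{t_j - \max(\bt)}$, leaving $\expec\sbr{\ee^{U_j}} = 1$) turns the right-hand side into $\expec\sbr{ g\rbr{\exp(\bU - U_j)}\, \ee^{U_j} }$, the factor $\expec\sbr{\ee^{\max(\bU)}}$ dropping out of the ratio.

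Hence, defining $\bQ^{(j)}$ by $\pr\sbr{\bQ^{(j)} \in B} = \expec\sbr{\indicator\cbr{\bU \in B}\, \ee^{U_j}}$ for Borel $B$ — a probability measure, again by $\expec\sbr{\ee^{U_j}} = 1$ — the previous identity reads $\bo{\mathcal{U}}^{(j)} \eqd \exp\rbr{\bQ^{(j)} - Q^{(j)}_j}$, which is the first claim. For the density, if $\bU$ has density $f_{\bU}$ with respect to $\upsilon$, then for every Borel $B \subseteq [-\infty,\infty)^d$,
\[
    \pr\sbr{\bQ^{(j)} \in B} = \expec\sbr{\indicator\cbr{\bU \in B}\, \ee^{U_j}} = \int_B \ee^{u_j}\, f_{\bU}(\bu)\, \diff\upsilon(\bu),
\]
with $\ee^{-\infty} = 0$, so $\bQ^{(j)}$ has $\upsilon$-density $\bt \mapsto \ee^{t_j} f_{\bU}(\bt)$ — a genuine probability density since it integrates to $\expec\sbr{\ee^{U_j}} = 1$, and one vanishing on the faces $\mathbb{A}_J$ with $j \notin J$, consistent with $Q_j^{(j)} > -\infty$ almost surely and with the $j$-th coordinate of $\bo{\mathcal{U}}^{(j)}$ being identically $1$.

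The points I expect to need care are purely bookkeeping: checking the inclusion $\cbr{Y_j > 0} \subseteq \cbr{T_j > -\infty}$ so that the shifts $\bY - Y_j$ and $\bT - T_j$ are well defined, and handling $-\infty$ coordinates consistently (via $\ee^{-\infty}=0$ and $0\cdot(\text{bounded})=0$) so that the change-of-measure identities and the final density formula are valid on all of $[-\infty,\infty)^d$ rather than on a single face. No genuine estimate is required, and the argument in fact uses nothing specific to the mixture generator \eqref{equa:generalgenerator} beyond its being a normalized $\bU$-generator.
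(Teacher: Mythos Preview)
Your proof is correct and follows essentially the same approach as the paper. The only organizational difference is that the paper routes the computation through the spectral vector $\bS$ via \eqref{09:29:14:29} (taking $g(\bo{s}) = \indicator\cbr{\bo{s}-s_j \le \ln\by}\ee^{s_j}$ and then using $\bY = E+\bS$), whereas you go through the $\bT$-generator via \eqref{ZtoT} and \eqref{equa:TtoU}; since $\bS \eqd \bT - \max(\bT)$ and the two exponential-tilting formulas are equivalent, the computations are the same chain read in opposite directions.
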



\js{There are thus two possible ways to simulate $\bT$:
\begin{itemize}
\item Method (a): simulate $\bT$ as a mixture over $\bT^{(1)},\ldots,\bT^{(k)}$ in~\eqref{equa:lawT^k}, where we simulate $\bT^{(k)}$ from $\bU^{(k)}$ via \eqref{eq:fV2fT} and rejection sampling.
\item Method (b): Simulate $\bT$ from $\bU$ via \eqref{eq:fV2fT} and rejection sampling, where we simulate from $\bU$ as a mixture over $\bU^{1}, \ldots, \bU^{(r)}$ via \eqref{equa:generalgenerator}.
\end{itemize}
Algorithm~\ref{alg:cap} corresponds to method~(a). The method from \citet{engelke2020graphical} rather corresponds to method~(b). In case $r=1$, they are the same. In case $r > 1$, can we compare both methods in terms of ease of application, speed, \ldots?}
\ak{I don't think we need to explain how to simulate $\bo{\mathcal{U}}^{(j)}$ via $\bU$ or vice versa. Johan's explanation above could be added instead.}

\subsection{Parametric examples}
\label{sec:simu:param}

We use the results in Section~\ref{subsec:algo} to draw random samples from the mgp distributions associated with the mixture logistic and \HR{} models from Section~\ref{sec:examples}.
 
\subsubsection{Mixture logistic model}

We want to simulate the mgp random vector $\bY$ associated with the mixture logistic model defined in Section~\ref{subsec:mixturelogisticmodel}. Let $\bU$ be the mixture generator in~\eqref{equa:generalgenerator} with matrix $A$ and generators $\bU^{(1)},\ldots,\bU^{(r)}$ in \eqref{equa:generatormixturemodel} with parameters $\alpha_1,\ldots,\alpha_r$. The $\lvert J_k \rvert$-variate random vector $\bU^{(k)}$ has independent Gumbel components and is therefore absolutely continuous with respect to $\lambda_{\lvert J_k \rvert}$. Using Algorithm~\ref{alg:cap}, it remains to generate random samples from the densities $q_{j,k}$ ($j \in J_k$) defined for $\bt=\rbr{t_i}_{i \in J_k} \in \reals^{J_k}$ by
    \begin{equation}
         q_{j,k}(\bt)
 =\frac{\ee^{t_{j}} f_{U_{j}^{(k)}}\rbr{t_{j}-\ln \rbr{a_{jk}/m_k}}}{a_{jk}/m_k} \cdot \prod_{i \in J_k \setminus \cbr{j}} f_{U_i^{(k)}}\rbr{t_i-\ln \rbr{a_{ik}/m_k}}.
 \label{equa:densityqj0}
    \end{equation}
\begin{proposition}
\label{propo:simulationqlogostic}
Let $k \in R$ and $j \in J_k$.
   Let $\bo{\psi}=\rbr{\psi_i}_{i \in J_k}$ have density $q_{j,k}$ in \eqref{equa:densityqj0}. The distribution of $\bo{\psi}$ is that of a random vector with independent components, such that 
    \[ \psi_i
      \eqd \begin{dcases}
		U_i^{(k)} +\ln \rbr{a_{ik}/m_k}, & \text{if $i \in J_k \setminus \cbr{j}$,} \\
		-\alpha_k\ln N+\ln\rbr{a_{jk}/\rbr{m_k \Gamma(1-\alpha_k)} },  & \text{if $i=j$,}
	\end{dcases}
    \]
where $N$ is a Gamma random variable, independent from $\bU^{(k)}$, with shape and rate parameters $1-\alpha_k$ and $1$ respectively.
\end{proposition}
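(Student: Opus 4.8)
The plan is to compute the density $q_{j,k}$ explicitly using the fact that $\bU^{(k)}$ has independent components with known (Gumbel-type) marginal densities, and then to recognize the resulting product as the density of the random vector described in the statement.

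First I would recall from \eqref{equa:generatormixturemodel} that $U_i^{(k)} \eqd \ln\rbr{X_i^{\alpha_k}/\Gamma(1-\alpha_k)}$ with $X_i$ independent unit-Fréchet, so that each $U_i^{(k)}$ has an explicit Lebesgue density $f_{U_i^{(k)}}$ on $\reals$; indeed, if $X$ is unit-Fréchet then $\ln(X^{\alpha}/\Gamma(1-\alpha))$ has density $s \mapsto \tfrac{1}{\alpha}\Gamma(1-\alpha)^{1/\alpha}\ee^{-s/\alpha}\exp\bigl(-\Gamma(1-\alpha)^{1/\alpha}\ee^{-s/\alpha}\bigr)$. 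Because $\bU^{(k)}$ has independent components, the density $f_{\bU^{(k)}}$ factorizes, and formula \eqref{equa:njkqjk2} for $q_{j,k}$ in Proposition~\ref{lemma:conditionalT's} immediately splits as in \eqref{equa:densityqj0}: the $i$-th factor for $i \ne j$ is simply $f_{U_i^{(k)}}(t_i - \ln(a_{ik}/m_k))$, which is the density of $U_i^{(k)} + \ln(a_{ik}/m_k)$, and the $j$-th factor is the exponentially tilted density $\ee^{t_j} f_{U_j^{(k)}}(t_j - \ln(a_{jk}/m_k))/(a_{jk}/m_k)$.

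The core of the proof is therefore to identify this tilted $j$-th factor. I would substitute the explicit Gumbel-type density above into the tilted expression, collect the exponential terms, and after the change of variables show that the resulting density in the variable $t_j$ is that of $-\alpha_k \ln N + \ln\bigl(a_{jk}/(m_k\Gamma(1-\alpha_k))\bigr)$ where $N \sim \operatorname{Gamma}(1-\alpha_k, 1)$. Concretely, writing $u = t_j - \ln(a_{jk}/m_k)$, the tilted density in $u$ is proportional to $\ee^{u}\cdot \tfrac{1}{\alpha_k}\Gamma(1-\alpha_k)^{1/\alpha_k}\ee^{-u/\alpha_k}\exp\bigl(-\Gamma(1-\alpha_k)^{1/\alpha_k}\ee^{-u/\alpha_k}\bigr)$; setting $n = \Gamma(1-\alpha_k)^{1/\alpha_k}\ee^{-u/\alpha_k}$, i.e.\ $u = -\alpha_k\ln n + \ln\Gamma(1-\alpha_k)$, one checks that $n$ has density proportional to $n^{-\alpha_k}\ee^{-n}$, which is exactly the $\operatorname{Gamma}(1-\alpha_k,1)$ density (the normalizing constant $1/(a_{jk}/m_k)$ makes it integrate to one, consistent with $\expec[\ee^{U_j^{(k)}}]=1$). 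Translating back through $t_j = u + \ln(a_{jk}/m_k)$ gives the stated law $\psi_j \eqd -\alpha_k\ln N + \ln\bigl(a_{jk}/(m_k\Gamma(1-\alpha_k))\bigr)$, and independence of $N$ from $\bU^{(k)}$ is inherited from the product structure.

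I do not expect a serious obstacle here; the only points requiring care are bookkeeping with the multiplicative constants (checking that the factor $1/(a_{jk}/m_k)$ in \eqref{equa:njkqjk2} exactly absorbs the normalization so that $q_{j,k}$ is a genuine probability density, which follows from $\expec[\ee^{U_j^{(k)}}] = 1$) and the sign conventions in the change of variables $u \mapsto n$. Everything else is a routine substitution into the known Fréchet/Gumbel density.
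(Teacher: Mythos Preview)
Your approach is essentially the same as the paper's: both exploit the product structure of $q_{j,k}$ coming from the independence of the components of $\bU^{(k)}$, read off the $i\neq j$ factors as shifted marginals, and identify the exponentially tilted $j$-th factor with the law of $-\alpha_k\ln N+\text{const}$ by direct calculus (the paper simply writes ``by straightforward calculus'' where you spell out the change of variable). One minor bookkeeping slip to fix: the marginal density of $U_i^{(k)}$ involves $\Gamma(1-\alpha_k)^{-1/\alpha_k}$, not $\Gamma(1-\alpha_k)^{1/\alpha_k}$; with the correct sign your intermediate step becomes $u=-\alpha_k\ln n-\ln\Gamma(1-\alpha_k)$, and then $t_j=u+\ln(a_{jk}/m_k)$ lands exactly on the stated constant $\ln\bigl(a_{jk}/(m_k\Gamma(1-\alpha_k))\bigr)$.
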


\subsubsection{Mixture \HR~model}

In the same way, we want to draw random samples from the mgp random vector $\bY$ associated with the mixture \HR~model in Section~\ref{subsec:mixturehrmodel}. Let $\bU$ be the mixture generator in~\eqref{equa:generalgenerator} with matrix $A$ and generators $\bU^{(1)},\ldots,\bU^{(r)}$ in \eqref{equa:generatormixtureHRmodel}. Let $k \in R$. The $\lvert J_k \rvert$-variate random vector $\bU^{(k)}$ has normal density $f_{\bU^{(k)}}$. \amtwo{Using Algorithm~\ref{alg:cap}, it remains to simulate from the densities $q_{j,k}$ ($j \in J_k$) in Equation~\eqref{equa:densityqj0}, with the difference that the generators $\bU^{(1)},\ldots,\bU^{(r)}$ are defined via \eqref{equa:generatormixtureHRmodel} instead of \eqref{equa:generatormixturemodel}.}

\begin{proposition}
\label{propo:pdfhrq}
Let $j \in J_k$. Let $\bo{\zeta}=\rbr{\zeta_j}_{i \in J_k}$ have density $q_{j,k}$ \amtwo{in~\eqref{equa:densityqj0} with generators $\bU^{(1)},\ldots,\bU^{(r)}$ as in \eqref{equa:generatormixtureHRmodel}.} With $\Tilde{\Sigma}^{(k)}$ as in \eqref{equa:generatormixtureHRmodel}, we have
    \[
    \bo{\zeta} \sim \Normal\rbr{ \rbr{\ln \rbr{a_{ik}/m_k}-\Tilde{\Sigma}_{ii}^{(k)}/2+\Tilde{\Sigma}^{(k)}_{ij}}_{i \in J_k}, \Tilde{\Sigma}^{(k)} }.
    \]    
\end{proposition}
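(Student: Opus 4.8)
The plan is to recognize $q_{j,k}$ as an exponentially tilted Gaussian density and to apply the standard Esscher-transform identity for the multivariate normal law. Concretely, by Proposition~\ref{lemma:conditionalT's}, Eq.~\eqref{equa:njkqjk2}, the density of $\bo{\zeta}$ is
\[
	q_{j,k}(\bt) = \frac{\ee^{t_j}\, f_{\bU^{(k)}}\rbr{\bt - \bo{c}}}{a_{jk}/m_k},
	\qquad \bt \in \reals^{J_k},
\]
where $\bo{c} := \rbr{\ln\rbr{a_{ik}/m_k}}_{i \in J_k}$ and, by~\eqref{equa:generatormixtureHRmodel}, $f_{\bU^{(k)}}$ is the density of $\bU^{(k)} \sim \Normal_{\lvert J_k\rvert}\rbr{-\tfrac12 \diag \Tilde{\Sigma}^{(k)},\, \Tilde{\Sigma}^{(k)}}$; when $\Tilde{\Sigma}^{(k)}$ is diagonal this factorizes and recovers the independent-components expression~\eqref{equa:densityqj0}. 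First I would put $\bo{W} := \bU^{(k)} + \bo{c}$, so that $\bo{W} \sim \Normal_{\lvert J_k\rvert}\rbr{\bo{\nu},\, \Tilde{\Sigma}^{(k)}}$ with $\bo{\nu} := -\tfrac12 \diag \Tilde{\Sigma}^{(k)} + \bo{c}$ and $f_{\bo{W}}(\bw) = f_{\bU^{(k)}}(\bw - \bo{c})$; then $q_{j,k}$ is proportional to $\bw \mapsto \ee^{w_j} f_{\bo{W}}(\bw)$, i.e.\ $\law(\bo{\zeta})$ is the exponential tilt of $\law(\bo{W})$ in the direction of $\bo{e}_j$, the $j$-th canonical unit vector of $\reals^{J_k}$.

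Next I would invoke the Gaussian Esscher-transform identity: for any $\bo{\theta} \in \reals^{J_k}$, completing the square in the quadratic exponent of the normal density shows that the probability measure with density $\bw \mapsto \ee^{\bo{\theta}^\top \bw} / \expec\sbr{\ee^{\bo{\theta}^\top \bo{W}}}$ with respect to $\law(\bo{W})$ equals $\Normal_{\lvert J_k\rvert}\rbr{\bo{\nu} + \Tilde{\Sigma}^{(k)} \bo{\theta},\, \Tilde{\Sigma}^{(k)}}$. Applying this with $\bo{\theta} = \bo{e}_j$ leaves the covariance equal to $\Tilde{\Sigma}^{(k)}$ and moves the mean to $\bo{\nu} + \Tilde{\Sigma}^{(k)} \bo{e}_j$, whose $i$-th coordinate is $\ln\rbr{a_{ik}/m_k} - \Tilde{\Sigma}^{(k)}_{ii}/2 + \Tilde{\Sigma}^{(k)}_{ij}$ (using symmetry of $\Tilde{\Sigma}^{(k)}$), which is exactly the asserted mean vector. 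Consistency of the normalizing constant $a_{jk}/m_k$ in the display for $q_{j,k}$ follows from $\expec\sbr{\ee^{W_j}} = \ee^{c_j}\,\expec\sbr{\ee^{U^{(k)}_j}} = (a_{jk}/m_k)\,\exp\rbr{-\Tilde{\Sigma}^{(k)}_{jj}/2 + \Tilde{\Sigma}^{(k)}_{jj}/2} = a_{jk}/m_k$, in agreement with~\eqref{equa:conditiononU}.

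There is no genuine obstacle here: the only computational content is the completing-the-square identity for the Gaussian Esscher transform, which is routine. The single point requiring care is the bookkeeping of the shift by $\bo{c}$ and of the index set $J_k$, so that the mixture scalings $\ln(a_{ik}/m_k)$ land in the correct slots of the mean vector; the additive term $-\tfrac12\diag\Tilde{\Sigma}^{(k)}$ in the mean of $\bU^{(k)}$ is precisely what makes the tilting normalization collapse to $1$.
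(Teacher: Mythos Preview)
Your proposal is correct and is precisely the direct calculation the paper alludes to (the paper's own proof reads ``The proof is by direct calculations and is omitted for brevity''). Your identification of $q_{j,k}$ as the exponential tilt of the shifted Gaussian $\bW = \bU^{(k)} + \bo{c}$ along $\bo{e}_j$, together with the completing-the-square identity, is exactly the intended computation, and your verification of the normalizing constant via $\expec[\ee^{U^{(k)}_j}]=1$ is spot on.
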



\subsubsection{Simulation example}
\amtwo{The results in this section can be reproduced using the code available on \url{https://github.com/AnasMourahib/MGPD-simulation}.} We use Algorithm~\ref{alg:cap} with inputs $d=r=3$, triangular coefficient matrix 
\[A=
\begin{pmatrix}
1 & 0 & 0\\
1/2 & 1/2 & 0 \\
1/3 & 1/3 & 1/3
\end{pmatrix},
\]
and two different specifications of the generators
$\bU^{(1)},\bU^{(2)},\bU^{(3)}$: first, those in \eqref{equa:generatormixturemodel} with dependence parameters $\alpha_1=\alpha_2=\alpha_3=0.5$ to simulate the mgp random vector $\bY$ associated with the mixture logistic model;  second, those in \eqref{equa:generatormixtureHRmodel} based on a variogram matrix with all off-diagonal elements equal to $1.38$, to simulate the mgp random vector $\bY$ associated with the mixture \HR~model. Figure~\ref{fig:mgp} displays the pairwise scatter plots of a sample of size $100$ from the transformed mgp random vector 
\begin{equation}
\label{equa:Z}
\bZ=4\cbr{\exp\rbr{\bY/4 }-1},    
\end{equation}
associated with the mixture logistic and \HR~models in panels~(a) and~(b), respectively. The Box--Cox type transformation in \eqref{equa:Z} ensures that the vector of lower endpoints of $\bZ$ is $\bo{\eta}=(-4,-4,-4)$ and facilitates visualization. 
\jsch{By Proposition~\ref{propo:extremedirections}, the matrix $A$ produces a mixture model with three distinct extreme directions $J \subset D$, namely $\cbr{1,2,3}$, $\cbr{2,3}$, and $\cbr{3}$. This is visible in Figure~\ref{fig:mgp} by a concentration of points on the three corresponding subfaces $\mathbb{A}^{-\bo{4}}_{J}= \cbr{\bx \in [-4,\infty)^3: \ x_j>-4 \text{ iff } j \in J }$.}



\begin{figure}[ht]
    \centering
       \subfloat[\centering ]{{\includegraphics[width=0.48\textwidth]{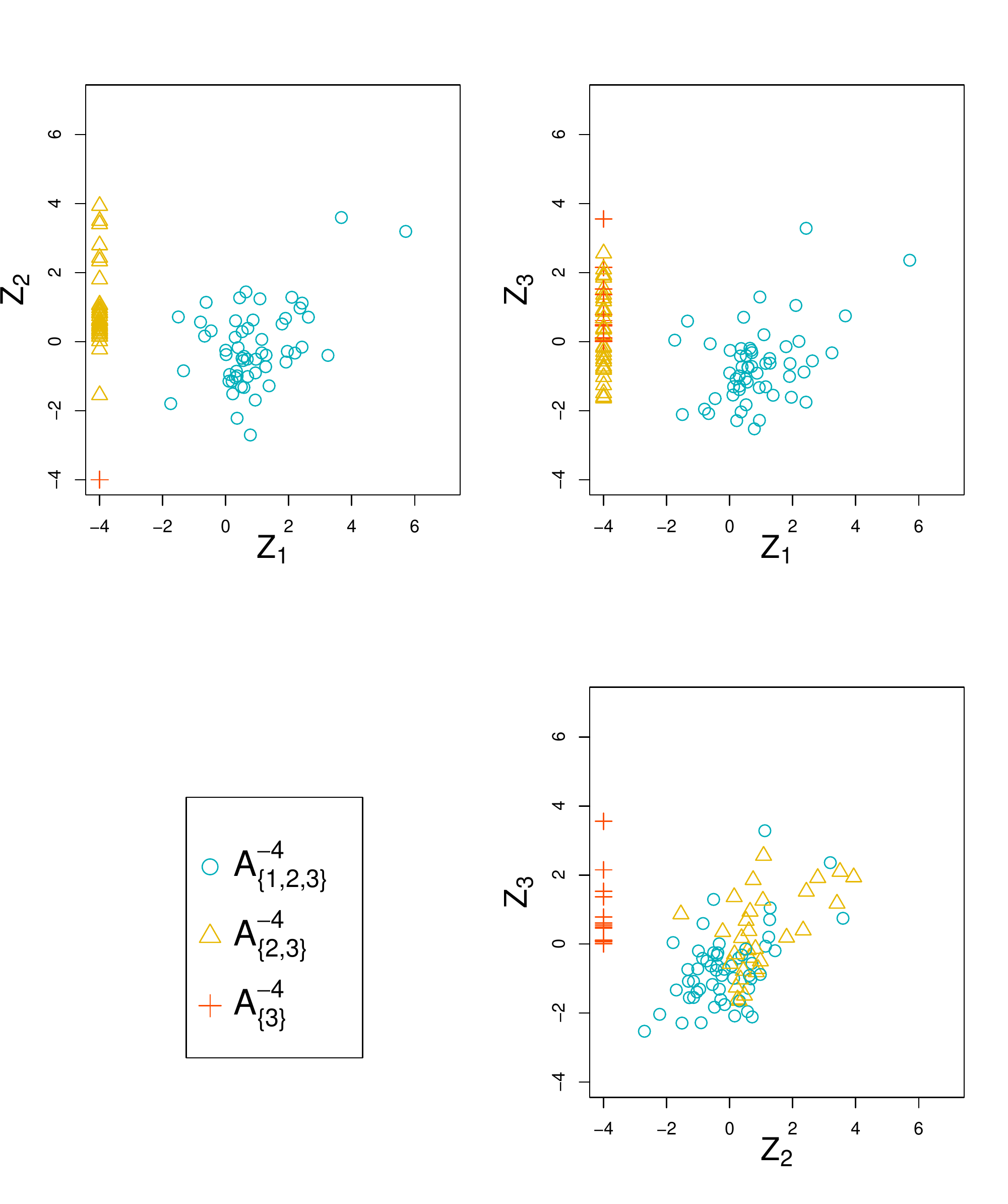} }}%
    \subfloat[\centering ]{{\includegraphics[width=0.48\textwidth]{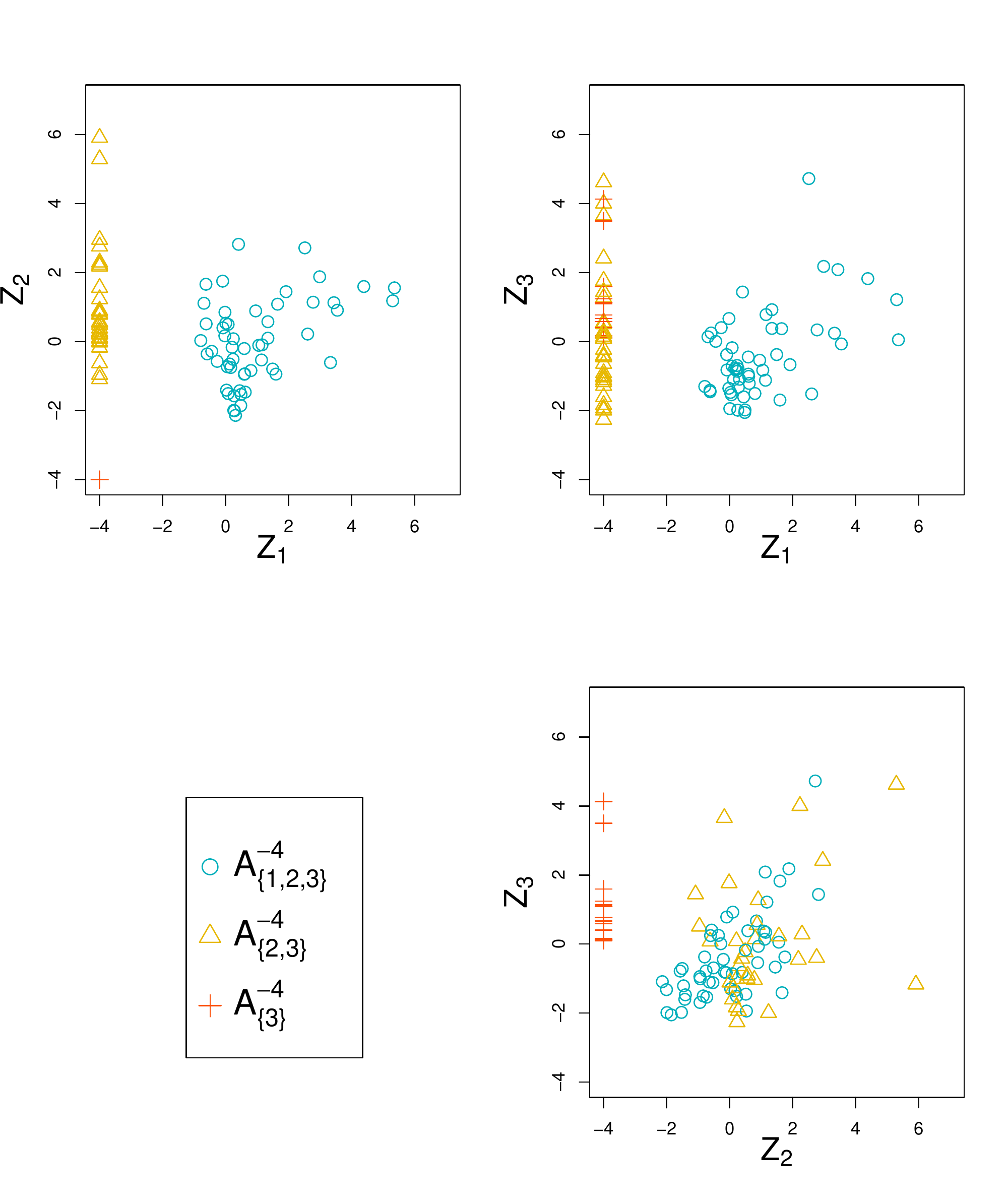} }}%
    \caption{Sample of size $100$ from the transformed mgp random vector $\bZ$ in \eqref{equa:Z} associated with the mixture logistic model in (a) and the mixture \HR~model in (b).} %
    \label{fig:mgp}%
\end{figure}

\amch{To assess Algorithm~\ref{alg:cap}, Table~\ref{table} compares the empirical proportions of points in the three sub-faces $\mathbb{A}_{J}^{-4}$ in a sample of size $1000$ with the corresponding true probabilities in Eq.~\eqref{equa:lawT^k} for the logistic and \HR\ mixture models.}

\begin{table}[htbp]
  \centering
  \caption{Empirical proportions and true probabilities of lying in each sub-face $\mathbb{A}_{J}^{-4}$ for the extreme directions $J = \cbr{1,2,3}$, $\cbr{2,3}$ and $\cbr{3}$ based on samples of size $1000$ drawn from the logistic and \HR\ mixture models by means of Algorithm~\ref{alg:cap}.}
  \label{tab:example}
  \begin{tabular}{c|cc|cc} 
    \toprule
    \multicolumn{1}{c}{} & \multicolumn{2}{|c|}{\textbf{mixture logistic}} & \multicolumn{2}{|c}{\textbf{mixture \HR}} \\ 
    \midrule
    extreme direction $J$ & \color{white}a \color{black} empirical & true & empirical & true \\ 
    \midrule
     $\cbr{1,2,3}$ & \color{white}a \color{black} 0.559 & 0.555 & 0.560 & 0.553 \\
     $\cbr{2,3}$ &\color{white}a \color{black} 0.297 & 0.286 & 0.291 & 0.288 \\
     $\cbr{3}$ & \color{white}a \color{black}0.144 & 0.158 & 0.148 & 0.157 \\
    \bottomrule
  \end{tabular}
  \label{table}
\end{table}

\section{Conclusion}
\label{sec:conclusion}

The extreme directions of a random vector refer to those groups of variables that can be large simultaneously while those outside the group are not. As such scenarios are likely to occur in high dimensions or in spatial contexts, we have studied mgp models featuring arbitrary collections of extreme directions. Furthermore, we have provided a general stochastic representation of mev distributions with arbitrary extreme directions and we have computed their corresponding mgp distributions. The construction involved a mixture specification extending the max-linear model by including multivariate factors. The density of such an mgp distribution---and thus of the exponent measure of the mev distribution---with respect to an appropriate dominating measure has been expressed in terms of those of its generators. In addition, we have provided a simulation algorithm for drawing samples from such mgp distributions. The formulas have been made explicit for two parametric models, extending the classical logistic and Hüsler–Reiss mev and mgp distributions to families with arbitrary extreme directions.

Our work raises intriguing questions for further research. The foremost among these is the challenge of statistical inference for mgp mixture models based on multivariate threshold excesses. This multifaceted problem encompasses parameter estimation, model selection, and the incorporation of sparsity constraints.

Additionally, our research opens doors to the application of mgp mixture models in the realm of graphical models for extremes. Whereas the foundations laid by \citet{engelke2020graphical} involve a single extreme direction, the extension offered in \citet{engelke2022graphical} incorporates general exponent measures, enabling a more comprehensive understanding of extreme events in diverse contexts.

\appendix

\section{Proofs of Section~\ref{sec:Directions}}
\begin{proof}[Proof of Proposition~\ref{propo:angularmeasureinterpretation}]
The monotone convergence theorem and homogeneity of $\mu$ yield
\begin{align*}
 \mu(\EEJ)>0
 &\iff \forall r > 0 : \; \mu\rbr{\cbr{\bx \in \EEJ: \; \| \bx \|>r }}>0\\
 &\iff \mu\rbr{\cbr{\bx \in \EEJ: \; \| \bx \|>1 }}>0.
\end{align*}  
Since $\mu\rbr{\cbr{\bx \in \EEJ: \; \| \bx \|>1 }} =\phi(\sphereJ)$, the conclusion follows.
\end{proof}

\begin{proof}[Proof of Lemma~\ref{lemma:halfrectangles}]
Let $\eps>0$. The $\mu$-continuity of $\mathbb{R}_{1,J}^{\eps}$ gives   
\begin{equation}
\label{equa:limitmucontinuity}
    n \pr\left[ \bX \in \rectangle \right]
    = n \pr\left[ n^{-1} \bX \in \mathbb{R}_{1,J}^{\eps} \right]
    \to \mu \left( \mathbb{R}_{1,J}^{\eps} \right), \qquad n \to \infty.    
\end{equation}
By the dominated convergence theorem, letting $\eps$ tend to zero yields
\begin{equation}
\label{equa:dominatedconv}
    \lim_{\eps \to 0} \mu \left( \mathbb{R}_{1,J}^{\eps} \right)
    = \mu \rbr{ \cbr{\bx \in \EEJ: \; \| \bx \|>1 }}
    = \phi \left( \sphereJ \right),
\end{equation}
as $\cbr{\bx \in \EEJ: \; \| \bx \|>1 } = \cbr{ \bx \in \EE : \, \| \bx \| > 1, \bx / \| \bx \| \in \sphereJ }$.
Combining limits~\eqref{equa:limitmucontinuity} and~\eqref{equa:dominatedconv} gives \eqref{equation:halfrectangles}.
\end{proof}

\begin{proof}[Proof of Lemma~\ref{lemma:halfcones}]
Let $N \subset [0,\infty]$ be the set that contains those $\eps>0$ for which the boundary of $\{\bw \in \mathbb{S}: \; w_j>\eps \text{ iff } j \in J\}$ as a 
subset of $\mathbb{S}$ receives positive mass by $\phi$. Since $\phi$ is a finite measure, $N$ is at most countable. Restricting $\eps>0$ to $(0,\infty) \setminus N$  and applying the same reasoning as in the proof of Lemma~\ref{lemma:halfrectangles} yields \eqref{equality:halfcones}.
\end{proof}

\begin{proof}[Proof of Lemma~\ref{linkbetweenrepresntations}]
From the representation $\bY = \bS + E$ in \eqref{ZtoS}, we deduce
\begin{align*}
    \pr \sbr{ Y_j > -\infty \text{ iff } j \in J }
    &= \pr \sbr{ S_j + E > -\infty \text{ iff } j \in J } \\
    &= \pr \sbr{ S_j > -\infty \text{ iff } j \in J }.
\end{align*} 
The identity~\eqref{YtoS} follows. Next, combining $\bS=\bT-\max(\bT)$ with $-\infty<\max(\bT)<\infty$ almost surely yields \eqref{YtoT}. Finally, the random vector $\bU$ is linked to $\bT$ via \eqref{equa:TtoU}, so that
\[
    \pr \sbr{ T_j > -\infty \text{ iff } j \in J }
    = \frac{\expec\sbr{ \ee^{\max(\bU)} \indicator \cbr{ U_j > -\infty \text{ iff } j \in J }}}{\expec \sbr{ \ee^{\max( \bU)} } }.
\]
Since $\expec\sbr{ \ee^{\max(\bU)} \indicator \cbr{ U_j > -\infty \text{ iff } j \in J }} > 0$ if and only if $\pr \sbr{ U_j > -\infty \text{ iff } j \in J } > 0$, equivalence~\eqref{YtoU} follows. 
\end{proof}

\section{Proofs of Section~\ref{sec:sparsemgpds}}
\label{sec:proofs4}
\begin{proof}[Proof of Theorem~\ref{stephanson}]
Let $\bx \in [0,\infty)^d$. If $x_j=0$ for some $j \in D$, then $G^{n}_{\bM}(n\bx) = G_{\bM}(\bx) = 0$ for all $n \geq 1$. Suppose that $\bx \in (0,\infty)^d$. By \eqref{mixturemodel}, $G_{\bM}(\bx)$ is given by
\begin{align}
    &\pr\sbr{ \max_{k\in R}\cbr{a_{1k} Z_{1}^{(k)}} \leq x_1, \ldots, \max_{k\in R}\cbr{a_{dk} Z_{d}^{(k)} }\leq x_d } 
    =\pr\sbr{ \forall k \in R : \forall j \in J_k :  a_{jk}Z_{j}^{(k)} \leq x_j } \nonumber \\
    &=\prod_{k\in R} \pr\sbr{\forall j \in J_k : a_{jk}Z_{j}^{(k)} \leq x_j}
    =\prod_{k \in R} \exp\cbr{ -\ell_{J_k}^{(k)} \rbr{ \rbr{ a_{jk}/x_j }_{j \in J_k} } }\nonumber \\
    &=\exp\cbr{-\sum_{k \in R} \ell_{J_k}^{(k)} \rbr{ \rbr{ a_{jk}/x_j }_{j \in J_k} }   }.\label{equa:expressionGM}
\end{align}
Homogeneity of $\ell_{J_k}^{(k)}$ in $\eqref{equa:k-thstdf}$ for all $k \in R$ yields that $G^{n}_{\bM}(n \bx)=G(\bx)$ for all $n \geq 1$ so that $G_{\bM}$ is an mev distribution. Moreover, the constraints $\sum_{k=1}^{r} a_{jk}=1$ for all $j \in D$ in Definition~\ref{def:mixture} ensure that the margins of $G_{\bM}$ are unit-\FR. Finally, from \eqref{equa:defstdf} and \eqref{equa:expressionGM}, we deduce that the stdf of $G_{\bM}$ is given by \eqref{stdfgeneralmodel}.
\end{proof}

\begin{proof}[Proof of Proposition~\ref{propo:extremedirections}]
Recall $\mu$, the exponent measure of $G$. For $k \in R$, let $\mu_{J_k}^{(k)}$ be the exponent measure of the factor $\bZ_{J_k}^{(k)}$, let $\pi_{J_k} : \reals^d \to \reals^{J_k}$ be the projection $\bx \mapsto \bx_{J_k}$ and write $\mathbb{F}_{J_k}:=\cbr{\bz \in \EE: \; \forall j \notin J_k, z_j=0 }$. 
For $\bo{v} \in (0,1)^{m}$ and $A \subseteq [0,\infty)^{m}$, we write $\bo{v}\point A:=\cbr{\rbr{v_1a_1,\ldots,v_ma_m}: \ \bo{a}  \in A}$. Consider the measure 
\[
	\eta(B):=\sum_{k \in R} \mu_{J_k}^{(k)} \rbr{B^{(k)}},
\]
for Borel $B \subseteq \EE$, where $B^{(k)}:=\rbr{1/a_{jk}}_{j \in J_k} \point \pi_{J_k} \rbr{B \cap \mathbb{F}_{J_k}}$. For $\bx \in [0,\infty]^d \ \backslash  \cbr{\bzero}$, 
\begin{align*}
\eta\rbr{ \EE \setminus [\bzero,\bx] }
&=\sum_{k \in R} \mu_{J_k}^{(k)} \rbr{ \rbr{1/a_{jk}}_{j \in J_k} \point \pi_{J_k} \rbr{ \cbr{ \bz \in \EE: \exists j \in D, \; z_j > x_j } \cap \mathbb{F}_{J_k} }}\\
&=\sum_{k \in R} \mu_{J_k}^{(k)} \rbr{ \rbr{1/a_{jk}}_{j \in J_k} \point \pi_{J_k} \rbr{ \cbr{ \bz \in \EE: \exists j \in J_k, \; z_j > x_j } \cap \mathbb{F}_{J_k} }}\\
&=\sum_{k \in R} \mu_{J_k}^{(k)} \rbr{ \sbr{0,\rbr{x_j/a_{jk}}_{j \in J_k}}^c}
=\mu\rbr{ \EE \setminus [\bzero,\bx] },    
\end{align*}
where the last equality is a result of \eqref{stdfgeneralmodel}. By uniqueness of the exponent measure, we deduce that $\eta=\mu$. Let $J \subseteq D$ be non-empty and recall $\EE_J=\cbr{\bx \in \EE : x_j> 0 \text{ iff } j \in J}$. We find
\begin{equation*}
\mu \rbr{\EE_J}
=\sum_{k \in R} \mu_{J_k}^{(k)} \rbr{\rbr{1/a_{jk}}_{j \in J_k} \point \pi_{J_k} \rbr{\EE_J  \cap \mathbb{F}_{J_k}}}.
\end{equation*}
Let $k \in R$. We will show that the $k$-th term on the right-hand side is positive if and only if $J = J_k$. This will imply that $\mu(\EE_J) > 0$ if and only if $J \in \cbr{J_1,\ldots,J_r} = \mathcal{R}$, proving the proposition in view of Definition~\ref{def:Xdir}.

\begin{itemize}
\item If $J \setminus J_k \neq \emptyset$, then $\EE_J \cap \mathbb{F}_{J_k}=\emptyset$ and thus $\mu_{J_k}^{(k)} \rbr{\rbr{1/a_{jk}}_{j \in J_k} \point \pi_{J_k} \rbr{\EE_J  \cap \mathbb{F}_{J_k}}}=0$. 
\item Otherwise, if $J \subseteq J_k$, we get $\EE_J \cap \mathbb{F}_{J_k}=\EEJ$ and thus 
\[ \rbr{1/a_{jk}}_{j \in J_k} \point \pi_{J_k} \rbr{\EE_J  \cap \mathbb{F}_{J_k}}=\rbr{1/a_{jk}}_{j \in J_k} \point \pi_{J_k} \rbr{\EE_J}=\pi_{J_k} \rbr{\EE_J}. \]
Since $J_k$ is the only extreme direction of $\bZ_{J_k}^{(k)}$, we get $\mu_{J_k}^{(k)} \rbr{\pi_{J_k} \rbr{ \EE_J \cap \mathbb{F}_{J_k}} }>0$ if and only if $J=J_k$. \qedhere
\end{itemize}
\end{proof}

\begin{proof}[Proof of Proposition~\ref{propo-generator-general-model}]
For a non-empty set $J \subseteq D$, recall the set $\mathbb{A}_J$ defined in \eqref{eq:AJ} and the projection $\pi_{J} : \mathbb{A}_J \to \reals^{J}$ defined via $\bx \mapsto \bx_{J}$. Thus, for $J \subseteq D$ and $\bx \in \reals^{J}$, the point $\pi^{-1}_{J}(\bx)$ in $\mathbb{A}_J$ is defined by setting the components with indices $j \in J$ equal to $x_j$ and the components with indices $j \in D \setminus J$ equal to $-\infty$.

Let $g: [-\infty,\infty)^d \to [0,\infty)$ be a Borel measurable function. 
By linearity of the expectation and using standard reasoning from measure theory, we get, for $\bU$ with distribution \eqref{equa:generalgenerator},
\begin{equation}
\begin{aligned}
    &\expec\sbr{g\rbr{\bo{U}}}
    =\sum_{k \in R} m_k \expec\sbr{g\rbr{\pi_{J_k}^{-1} \rbr{ \bU^{(k)}+ \rbr{ \ln\rbr{a_{jk}/m_k} }_{j \in J_k} } }} .
    \label{equa:expectedofg}
    \end{aligned}
\end{equation}

First, let $j_0 \in D$. Equation~\eqref{equa:expectedofg} applied to the function $g :[-\infty,\infty)^d \to [0,\infty)$ defined via $g(\bx)=\ee^{x_{j_0}}$ gives 
\[
	\expec\sbr{\ee^{U_{j_0}}}
	= \sum_{k \in R : j_0 \in J_k}
	m_k \expec \sbr{\exp \cbr{ U^{(k)}_{j_0} + \ln \rbr{a_{j_0k} / m_k}}}
	= \sum_{k \in R : j_0 \in J_k} m_k \cdot \rbr{a_{j_0k} / m_k}
	= 1,  
\]
since $\expec \sbr{ \exp \rbr{ U^{(k)}_j } } = 1$ for all $j \in J_k$ and since $A$ has unit row sums.

Next, let $\by \in [0,\infty)^d$. Applying \eqref{equa:expectedofg} to the function $g(\bx)=\max\rbr{y_1\ee^{x_1},\ldots,y_d \ee^{x_d}}$ gives
\begin{align*}
	\expec \sbr{\max \rbr{\by \ee^{\bU}} }
	&= \sum_{k \in R} m_k \expec \sbr{
		\max \cbr{ \by \ee^{\pi_{J_k}^{-1} \rbr{ \bU^{(k)}+ \rbr{  \ln \rbr{a_{jk} /m_k} }_{j \in J_k} }  }  }
	}\\
    &=\sum_{ k \in R}  m_k \expec\sbr{ \max_{j \in J_k } \cbr{ y_j \frac{a_{jk}}{m_k} \ee^{U_{j}^{(k)} } } } \\
	&= \sum_{k \in R} \ell^{(k)}_{J_k}\rbr{\rbr{a_{jk}y_j}_{j \in J_k}}= \ell(\by),
\end{align*}
where we applied \eqref{stdfgeneralmodel} in the last step.
Hence, $\bU$ satisfies \eqref{eq:generator} and \eqref{equa:conditiononU} for the stdf $\ell$ of $G_{\bM}$. Thus, $\bU$ generates $H$.
\end{proof}
\begin{proof}[Proof of Theorem~\ref{theorem:general}]
Let $G$ be an mev distribution with unit-\FR~margins. Let $\mu$ be the exponent measure of $G$ and put $\mathcal{R}:=\cbr{ J \subseteq D: \; J \ne \emptyset, \; \mu\rbr{\EE_J}>0}$ where we recall $\EE=[0,\infty)^d \setminus \cbr{0}$ and $\EE_J=\cbr{\bx \in \EE: \; x_j >0 \text{ iff } j  \in J}$. We write $\mathcal{R}=\cbr{J_1,\ldots,J_r}$ with \amch{$2^{d}\ge r=\lvert \mathcal{R} \rvert \ge 1$} and $R=\cbr{1,\ldots,r}.$ Let $A = (a_{jk})_{j \in D;k \in R} \in [0,1]^{d \times r}$ be the coefficient matrix whose $(j,k)$-th coordinate is equal to 
\[
	a_{jk} =
	\begin{dcases}
		\mu \rbr{ \cbr{\by \in \EE_{J_k}: \; y_j>1} }, &
		\text{if $j \in J_k$,} \\
		0, & \text{if $j \in D \setminus J_k$.}				
	\end{dcases}
\]
For $\bo{v} \in (0,1)^{m}$ and $A \subseteq [0,\infty)^{m}$, we write $\bo{v}\point A:=\cbr{\rbr{v_1a_1,\ldots,v_ma_m}: \ \bo{a}  \in A}$. Fix $k \in R$ and consider the Borel measure $\mu_{J_k}^{(k)}$ on $(0,\infty)^{|J_k|}$ by
\[
	\mu_{J_k}^{(k)}\rbr{B}
	= \mu \rbr{\pi_{J_k}^{-1}\rbr{\rbr{a_{jk}}_{j \in J_k} \point B}},
\]
with $\pi_{J_k} : \EE_{J_k} \to (0,\infty)^{J_k}$ the natural coordinate projection. Let $\bo{Z}^{(k)}_{J_k}$ be a random vector on  $\reals^{J_k}$ with distribution 
\[
	\pr\sbr{\bo{Z}^{(k)}_{J_k} \leq \bo{x}}
	= \exp \cbr{-\mu_{J_k}^{(k)} \rbr{ [\bzero,\bx]^c } }, \qquad \bx \in [0,\infty]^{J_k} \setminus \cbr{\bzero}.
\]
The construction of $\mu_{J_k}^{(k)}$ and $\rbr{a_{jk}}_{j \in J_k}$ as above ensures that $\bZ_{J_k}^{(k)}$ has an mev distribution with unit-Fréchet margins and single extreme direction $J_k$. Following Definition~\ref{def:mixture} and Remark~\ref{rem:ZkJk}, consider the mixture random vector $\bM$ with coefficient matrix $A$ and factors $\bZ_{J_1}^{(1)},\ldots,\bZ_{J_r}^{(r)}$. Using Theorem~\ref{stephanson}, the random vector $\bo{M}$ follows an mev distribution $G_{\bM}$ with unit-\FR~margins and exponent measure $\tilde\mu$ satisfying for $\bx \in \EE$,
\begin{align*}
    &\tilde\mu\rbr{[\bzero,\bx]^c}
    =\sum_{k \in R} \mu_{J_k}^{(k)} \rbr{ \sbr{0,\rbr{x_j/a_{jk}}_{j \in J_k}}^c}
    =\sum_{k\in R} \mu_{J_k}^{(k)} \rbr{ \rbr{1/a_{jk}}_{j \in J_k} \point  \sbr{0,\bx_{J_k}}^c}\\
    &=\sum_{k \in R} \mu \rbr{\pi_{J_k}^{-1} \rbr{\rbr{a_{jk}}_{j \in J_k} \point \rbr{1/a_{jk}}_{j \in J_k} \point  \sbr{0,\bx_{J_k}}^c }}
    = \sum_{k \in R} \mu \rbr{\pi_{J_k}^{-1} \rbr{[\bzero,\bx_{J_k}]^c}}
    =\mu \rbr{ [\bzero,\bx]^c}.
\end{align*}
The last equality follows from $\pi_{J_k}^{-1} \rbr{[\bzero,\bx_{J_k}]^c} = \EE_{J_k} \setminus [\bzero,\bx]$ since the sets $\EE_{J_k}$ are all disjoint and $\mu$ has no mass outside $\bigcup_{k \in R} \EE_{J_k}$. 
Hence $G$ and $G_{\bM}$ are both mev distributions with unit-Fréchet margins and the same exponent measure. Thus $G_{\bM}=G$.
\end{proof}
\begin{proof}[Proof of Proposition~\ref{propo:densityofmixturemgp}]
The distribution of $\bU$ in \eqref{equa:generalgenerator} is a mixture of the distributions of the random vectors $\pi_{J_k}^{-1} \rbr{\bU^{(k)}+\rbr{\ln \rbr{a_{jk}/m_k} }_{j \in J_k} }$ over $k \in R$. The latter random vectors are absolutely continuous with respect to $\upsilon$ in \eqref{eq:upsilon} with density 
\[
f^{(k)}(\bu)=f_{\bU^{(k)}} \rbr{\bu_{J_k}- \rbr{\ln\rbr{a_{jk}/m_k}}_{j \in J_k}} \indicator \cbr{\bu \in \mathbb{A}_{J_k}}, \qquad \bu \in [-\infty,\infty)^d,
\]
where $\mathbb{A}_{J_k}$ as in \eqref{eq:AJ}. Hence $\bU$ is absolutely continuous with respect to $\upsilon$ with density 
\[
f_{\bU} \rbr{\bu}=\sum_{k \in R} m_{k} f_{\bU^{(k)}} \rbr{\bu_{J_k}- \rbr{\ln\rbr{a_{jk}/m_k}}_{j \in J_k} } \indicator \cbr{\bu \in \mathbb{A}_{J_k}}, \qquad  \bu \in [-\infty,\infty)^d.
\]
Finally, Theorem~\ref{theorem:pdfofUwrtmu} yields that the mgp random vector $\bY$ is absolutely continuous with respect to $\upsilon$ with density $h_{\bY}$ in \eqref{equa:densityofmixturemgp}.
\end{proof}

\begin{proof}[Proof of Proposition~\ref{propo:generatormixturelogisticmodel}]
For $k \in R$, the random vector $\bU^{(k)}$ in \eqref{equa:generatormixturemodel} satisfies \eqref{eq:generator} and \eqref{equa:conditiononU} for the logistic stdf with parameter $\alpha_k $ \citep[Proposition~1.2.1]{MF19}.  Finally, Proposition~\ref{propo-generator-general-model} finishes the proof.
\end{proof}

\begin{proof}[Proof of Proposition~\ref{propo:density}]
For $k \in R$, the random vector $\bU^{(k)}$ in \eqref{equa:generatormixturemodel} has independent Gumbel components. Thus $\bU^{(k)}$ is absolutely continuous with respect to $\lambda_{\lvert J_k \rvert}$. Proposition~\ref{propo:densityofmixturemgp} yields that the mgp random vector $\bY$ associated with the mixture logistic model is absolutely continuous with respect to $\upsilon$ in \eqref{eq:upsilon} with density $h_{\bY}$ given by \eqref{equa:densityofmixturemgp}. We use \citet[Subsection~7.2]{A16} to calculate the integrals in the latter equation. This yields the density of $\bY$ in \eqref{equa:densitymixturelogistic}.
\end{proof}

\begin{proof}[Proof of Proposition~\ref{propo:generatormixtureHRmodel}]
For $k \in R$, the random vector $\bU^{(k)}$ in \eqref{equa:generatormixtureHRmodel} satisfies \eqref{eq:generator} and \eqref{equa:conditiononU} for the \HR~stdf with variogram matrix $\Gamma^{(k)} $; see, e.g., \citet{wadsworth2014efficient}. Finally, Proposition~\ref{propo-generator-general-model} finishes the proof.
\end{proof}
\begin{proof}[Proof of Proposition~\ref{Propo:densityHR}]
For $k \in R$, the random vector $\bU^{(k)}$ in \eqref{equa:generatormixtureHRmodel} is a $\lvert J_k \rvert$-variate normal random vector with positive definite covariance matrix. Thus $\bU^{(k)}$ is absolutely continuous \wrto $\lambda_{\lvert J_k \rvert}$. Proposition~\ref{propo:densityofmixturemgp} yields that the mgp random vector $\bY$ associated with the mixture \HR~model is absolutely continuous with respect to $\upsilon$ with density $h_{\bY}$ given by \eqref{equa:densityofmixturemgp}. We use \citet[Section~7.2]{A16} to calculate the integrals in the latter equation. This yields the density of $\bY$ given by \eqref{equa:densitymixturehr}.
\end{proof}

\section{Proofs of Section~\ref{sec:simulation}}
\label{sec:proofs5}


\begin{proof}[Proof of Proposition~\ref{lemma:conditionalT's}]
For $k \in R$, let the random vector $\bW^{(k)} = \rbr{W^{(k)}_1,\ldots,W^{(k)}_d}$ with values in $\mathbb{A}_{J_k}$ be defined by
\[
	W_j^{(k)} =
	\begin{dcases}
		P^{(k)}_j & \text{if $j \in J_k$}, \\
		-\infty & \text{if $j \in D \setminus J_k$.}
	\end{dcases}
\]
Then Eq.~\eqref{equa:generalgenerator} is the same as
\begin{equation}
\label{eq:mkWk2U}
	\pr\sbr{\bU \in \point}
	=
	\sum_{k \in R} m_k \, \pr\sbr{ \bW^{(k)} \in \point }
\end{equation}
Further, we have, by the relation between the stdf $\ell^{(k)}$ and the generator $\bU^{(k)}$,
\begin{align*}
	\expec \sbr{ \exp \cbr{\max\rbr{ \bW^{(k)} }} }
	&= \expec \sbr{ \exp \cbr{ \max\rbr{\bP^{(k)}}} } \\
	&= \expec \sbr{ \max_{j \in J_k} \cbr{ \frac{a_{jk}}{m_k} \ee^{U_j^{(k)}} } }
	= \frac{1}{m_k} \ell^{(k)} \rbr{ (a_{jk})_{j \in J_k} }.
\end{align*}
Since also $\ell(\bone) = \expec \sbr{\ee^{\max(\bU)}}$, we find
\begin{equation}
\label{eq:mkWk2wk}
	\frac{m_k\expec\sbr{\ee^{\max \bW^{(k)} }}}{\expec\sbr{\ee^{\max \bU}}}
	= \frac{\ell^{(k)} \rbr{(a_{jk})_{j \in J_k}}}{\ell(\bone)} 
	= w_k	
\end{equation}
as defined in \eqref{equa:lawT^k}. By Equations~\eqref{equa:TtoU}, \eqref{eq:mkWk2U} and \eqref{eq:mkWk2wk}, we find, Borel sets $B \subseteq \left[-\infty, \infty\right)^d$,
\begin{align*}
 	\pr\sbr{\bT \in B}
 	&= \frac{\expec\sbr{\ee^{\max \bU} \indicator\cbr{\bU \in B}}}{\expec\sbr{\ee^{\max \bU}}} 
 	=\sum_{k \in R} \frac{m_k}{\expec\sbr{\ee^{\max \bU}}} \expec\sbr{\ee^{\max \bW^{(k)} } \indicator\cbr{ \bW^{(k)} \in B}} \\
    &= \sum_{k \in R} w_k \frac{\expec\sbr{\ee^{\max \bW^{(k)} } \indicator\cbr{ \bW^{(k)} \in B}}}{\expec\sbr{\ee^{\max \bW^{(k)} }}} \\
    &= \sum_{k \in R} w_k \frac{\expec\sbr{\ee^{\max \bP^{(k)} } \indicator\cbr{ \bP^{(k)} \in \pi_{J_k}\rbr{\mathbb{A}_{J_k} \cap B}}}}{\expec\sbr{\ee^{\max \bP^{(k)} }}} \\
    &= \sum_{k \in R} w_k  \pr\sbr{ \bT^{(k)} \in \pi_{J_k} \rbr{\mathbb{A}_{J_k} \cap B} },
\end{align*}
where $\bT^{(k)}$ is a random vector on $\reals^{J_k}$ which satisfies \eqref{equa:lawT^k}.

The expression for the density of $\bT^{(k)}$ follows from Lemma~\ref{lemma:TktoTK} below applied to $\bP = \bP^{(k)}$ upon noting that
\[
	\expec \sbr{ \exp \rbr{P_j^{(k)}} }
	= \expec \sbr{ \exp \cbr{U_j^{(k)} + \ln(a_{jk}/m_k)} }
	= a_{jk}/m_k, \qquad j \in J_k,
\]
as well as $\expec \sbr{ \exp \cbr{ \max \rbr{ \bP^{(k)} } } } = \ell^{(k)} \rbr{ (a_{jk})_{j \in J_k} } / m_k$, which was shown above.
\end{proof}

\begin{lemma}
	\label{lemma:TktoTK}
	Let $J$ be a positive integer. If $\bo{P}$ is a random vector on $\reals^J$ with Lebesgue density $f_{\bP}$ such that $p_j := \expec\sbr{\exp(P_j)} < \infty$ for all $j \in \{1,\ldots,J\}$, then the random vector $\bT$ with distribution $\pr\sbr{\bT \in \point} = \expec\sbr{\ee^{\max \bo{P}} \indicator\cbr{\bo{P} \in \point}} / \expec\sbr{\ee^{\max \bo{P}}}$ has Lebesgue density
	\begin{equation*}
		\label{eq:fV2fT} 
		f_{\bT}(\bt) = c \, g(\bt) \sum_{j=1}^J n_j \, q_j(\bt), \qquad \bt \in \reals^J,
	\end{equation*}
	with $c = \sum_{j=1}^J p_j / \expec\sbr{\ee^{\max \bo{P}}}$, $g(\bt) = \ee^{\max \bt} / \sum_{j=1}^J \ee^{t_j}$, mixture weights $n_j = p_j / \sum_{i=1}^J p_i$ and probability densities $q_j(\bt) = \ee^{t_j} f_{\bo{P}}(\bt) / p_j$.  
\end{lemma}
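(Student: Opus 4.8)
The plan is to compute the density of $\bT$ directly from the change-of-measure formula that defines its law. Starting from $\pr\sbr{\bT \in \point} = \expec\sbr{\ee^{\max \bo{P}} \indicator\cbr{\bo{P} \in \point}} / \expec\sbr{\ee^{\max \bo{P}}}$, I would first observe that for any Borel set $B \subseteq \reals^J$,
\[
	\pr\sbr{\bT \in B}
	= \frac{1}{\expec\sbr{\ee^{\max \bo{P}}}} \int_B \ee^{\max \bt}\, f_{\bo{P}}(\bt)\, \diff \bt,
\]
since $\bo{P}$ has Lebesgue density $f_{\bo{P}}$ and $\expec\sbr{\ee^{\max \bo{P}} \indicator\cbr{\bo{P} \in B}} = \int_B \ee^{\max\bt} f_{\bo{P}}(\bt) \diff\bt$. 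This already exhibits $\bT$ as absolutely continuous with respect to $\lambda_J$ with density $f_{\bT}(\bt) = \ee^{\max\bt} f_{\bo{P}}(\bt) / \expec\sbr{\ee^{\max\bo{P}}}$; the only remaining task is the algebraic rearrangement into the stated mixture form.

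Next I would perform the cosmetic factorisation. Multiply and divide by $\sum_{j=1}^J \ee^{t_j}$ to isolate the bounded factor $g(\bt) = \ee^{\max\bt} / \sum_{j=1}^J \ee^{t_j}$, and write $\ee^{\max\bt} f_{\bo{P}}(\bt) = g(\bt) \sum_{j=1}^J \ee^{t_j} f_{\bo{P}}(\bt)$. Then, for each $j$, write $\ee^{t_j} f_{\bo{P}}(\bt) = p_j \cdot q_j(\bt)$ with $q_j(\bt) = \ee^{t_j} f_{\bo{P}}(\bt)/p_j$; note $q_j$ is indeed a probability density on $\reals^J$ because $\int_{\reals^J} \ee^{t_j} f_{\bo{P}}(\bt) \diff\bt = \expec\sbr{\ee^{P_j}} = p_j < \infty$ by hypothesis. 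Finally, factor out $\sum_{i=1}^J p_i$ to produce the mixture weights $n_j = p_j / \sum_{i=1}^J p_i$ and collect the leftover constant $\sum_{j=1}^J p_j / \expec\sbr{\ee^{\max\bo{P}}} = c$, which yields
\[
	f_{\bT}(\bt)
	= \frac{1}{\expec\sbr{\ee^{\max\bo{P}}}} \ee^{\max\bt} f_{\bo{P}}(\bt)
	= c\, g(\bt) \sum_{j=1}^J n_j\, q_j(\bt).
\]

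There is no real obstacle here: the statement is essentially a definitional unwinding followed by bookkeeping. The only point that warrants a word of care is confirming that $\expec\sbr{\ee^{\max\bo{P}}} < \infty$, so that the normalising constant is well defined and finite; this follows from $\ee^{\max\bo{P}} \leq \sum_{j=1}^J \ee^{P_j}$ together with the assumption $p_j = \expec\sbr{\ee^{P_j}} < \infty$ for all $j$, giving $\expec\sbr{\ee^{\max\bo{P}}} \leq \sum_{j=1}^J p_j < \infty$. It is also worth remarking that the same hypothesis is exactly what makes each $q_j$ integrable, so all the pieces $c$, $g$, $n_j$, $q_j$ in the claimed formula are legitimate; the mixture weights sum to one by construction. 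I would present the proof in this order: finiteness of $\expec\sbr{\ee^{\max\bo{P}}}$, identification of the raw density $\ee^{\max\bt} f_{\bo{P}}(\bt)/\expec\sbr{\ee^{\max\bo{P}}}$, and the rearrangement into mixture form.
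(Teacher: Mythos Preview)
Your proposal is correct and follows essentially the same approach as the paper: identify the raw density $f_{\bT}(\bt) = \ee^{\max \bt} f_{\bP}(\bt) / \expec\sbr{\ee^{\max \bP}}$ from the change-of-measure formula, then rearrange algebraically into the mixture form. The paper's proof is a one-line computation that omits your checks on finiteness of $\expec\sbr{\ee^{\max \bP}}$ and integrability of the $q_j$, so your version is in fact more complete.
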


\begin{proof}[Proof of Lemma~\ref{lemma:TktoTK}]
The density of $\bT$ evaluated in $\bt \in \reals^J$ is
\begin{align*}
    f_{\bT}(\bt) 
    = \frac{\ee^{\max \bt}}{\expec\sbr{\ee^{\max \bP}}} f_{\bP}(\bt) 
    = \frac{\sum_{i=1}^J p_i}{\expec\sbr{\ee^{\max \bP}}} 
    \frac{\ee^{\max \bt}}{\sum_{j=1}^J \ee^{t_j}} 
    \sum_{j=1}^J \frac{p_j}{\sum_{i=1}^J p_i} \frac{\ee^{t_j}}{p_j} f_{\bP}(\bt),
\end{align*}
which is the expression stated in the lemma.
\end{proof}

\begin{proof}[Proof of Proposition~\ref{propo:extremalfunction}]
Equation~\eqref{09:29:14:29} yields that for measurable, nonnegative functions $g$, the distributions of $\bS$ and $\bU$ are related via
    \begin{equation*}
        \expec\sbr{g(\bS)} =
        \frac{\expec\sbr{g\rbr{\bU-\max \bU}\ee^{\max \bU}}}{\expec\sbr{\ee^{\max \bU}}}.
    \end{equation*}
Fix $\by \in [0,\infty)^{d}$ with $y_j \geq 1$.
For $g(\bo{s}) = \indicator \cbr{ \bo{s} - s_j \le \ln(\by) } \ee^{s_j}$, we get
\begin{align*}
	g(\bU - \max \bU)
	&= \indicator\cbr{ \rbr{\bU - \max \bU} - \rbr{U_j - \max \bU} \le \ln(\by)} \ee^{U_j - \max \bU} \\
	&= \indicator\cbr{ \bU - U_j \le \ln(\by) } \ee^{U_j - \max \bU}
\end{align*}
and thus
\begin{align*}
	\expec \sbr{\indicator\cbr{\bS-S_j \leq \ln(\by)}\ee^{S_j}}
	&=
	\frac{\expec\sbr{\indicator\cbr{ \bU - U_j \le \ln(\by) } \ee^{U_j - \max \bU} \ee^{\max \bU}}}%
	{\expec\sbr{\ee^{\max \bU}}} \\
	&= 	\frac{\expec\sbr{\indicator\cbr{ \bU - U_j \le \ln(\by) } \ee^{U_j}}}%
	{\expec\sbr{\ee^{\max \bU}}}.
\end{align*}
Further, we deduce from \eqref{equa:defstdf}, \eqref{11:21:01:39}, \eqref{eq:generator} and \eqref{equa:conditiononU} that
\[
	\pr\sbr{ Y_j > 0 } 
	= \frac{1}{\mu\rbr{\cbr{\bx \ge 0 : \; \bx \not\leq \bone}}} 
	= \frac{1}{\ell(\bone)} 
	= \frac{1}{\expec\sbr{\ee^{\max \bU}}}.
\]
Recall representation $\bY \eqd E+\boldsymbol{S}$ with $E$ a unit exponential random variable independent from $\bS$.
Since, by definition, $\pr\sbr{ \bQ^{(j)} \in \diff \bt } = \ee^{t_j} \pr\sbr{ \bU \in \diff \bt}$, we conclude that
\begin{align*}
	\pr\sbr{\ee^{\bQ^{(j)}-Q^{(j)}_j} \leq \by}
	&= \expec\sbr{\indicator\cbr{\ee^{\bU-U_j}\leq \by}\ee^{U_j}} \\
	&= \expec \sbr{\indicator\cbr{\bS-S_j \leq \ln(\by)}\ee^{S_j}} \cdot \expec\sbr{\ee^{\max \bU}} \\
	&= \pr\sbr{(E+\bS)-(E+S_j) \leq \ln(\by), \, E>-S_j} / \pr\sbr{Y_j > 0} \\
	&=\pr \sbr{\ee^{\bY}/\ee^{Y_j} \leq \by , \, Y_j > 0} / \pr\sbr{Y_j > 0} \\
	&=\pr \sbr{\bo{\mathcal{U}}^{(j)} \leq \by},
\end{align*}
where the last equality is proved in \citet[Lemma~2]{engelke2020graphical}.
\end{proof}

\begin{proof}[Proof of Proposition~\ref{propo:simulationqlogostic}]
Fix $j \in J_k$. If $i \in J_k \setminus \cbr{j}$, then the density of $\psi_i$ is clearly $t \mapsto f_{U^{(k)}_i}\rbr{t-\ln \rbr{a_{ik}/m_k}}$. Otherwise if $i=j$, by straightforward calculus, we get 
\[
\frac{\partial}{\partial t} \pr \sbr{ -\alpha_k\ln N+\ln\rbr{a_{jk}/\rbr{m_k \Gamma(1-\alpha_k)} } \leq t }=\frac{\ee^{t} f_{U_{j}^{(k)}}\rbr{t-\ln \rbr{a_{jk}/m_k}}}{\bigintss_{s \in \reals} \ee^{s} f_{U_{j}^{(k)}}\rbr{s-\ln \rbr{a_{jk}/m_k}} \diff s}.
\qedhere
\]
\end{proof}

\begin{proof}[Proof of Proposition~\ref{propo:pdfhrq}]
The proof is by direct calculations and is omitted for brevity.
\end{proof}



\paragraph{Funding} The research of Anas Mourahib was supported financially by the \emph{Fonds de la Recherche Scientifique – FNRS}, Belgium (grant
number T.0203.21).


\paragraph{Acknowledgments} We are grateful to the anonymous reviewers for their suggestions which greatly helped to improve the manuscript.

\bibliographystyle{plainnat}
\bibliography{biblio}

\begin{thebibliography}{43}
\providecommand{\natexlab}[1]{#1}
\providecommand{\url}[1]{\texttt{#1}}
\expandafter\ifx\csname urlstyle\endcsname\relax
  \providecommand{\doi}[1]{doi: #1}\else
  \providecommand{\doi}{doi: \begingroup \urlstyle{rm}\Url}\fi

\bibitem[Ballani and Schlather(2011)]{ballani2011construction}
Felix Ballani and Martin Schlather.
\newblock A construction principle for multivariate extreme value
  distributions.
\newblock \emph{Biometrika}, 98\penalty0 (3):\penalty0 633--645, 2011.

\bibitem[Beirlant et~al.(2004)Beirlant, Goegebeur, Segers, and Teugels]{Be04}
Jan Beirlant, Yuri Goegebeur, Johan Segers, and Jozef~L Teugels.
\newblock \emph{Statistics of extremes: theory and applications}, volume 558.
\newblock John Wiley \& Sons, 2004.

\bibitem[Chautru(2015)]{chautru2015dimension}
Emilie Chautru.
\newblock Dimension reduction in multivariate extreme value analysis.
\newblock \emph{Electronic Journal of Statistics}, 9:\penalty0 383--418, 2015.

\bibitem[Chiapino and Sabourin(2016)]{AM16}
Ma{\"e}l Chiapino and Anne Sabourin.
\newblock Feature clustering for extreme events analysis, with application to
  extreme stream-flow data.
\newblock In \emph{International Workshop on New Frontiers in Mining Complex
  Patterns}, pages 132--147. Springer, 2016.

\bibitem[Chiapino et~al.(2019)Chiapino, Sabourin, and
  Segers]{chiapino2019identifying}
Ma{\"e}l Chiapino, Anne Sabourin, and Johan Segers.
\newblock Identifying groups of variables with the potential of being large
  simultaneously.
\newblock \emph{Extremes}, 22\penalty0 (2):\penalty0 193--222, 2019.

\bibitem[Chiapino et~al.(2020)Chiapino, Cl{\'e}men{\c{c}}on, Feuillard, and
  Sabourin]{chiapino2020}
Ma{\"e}l Chiapino, St{\'e}phan Cl{\'e}men{\c{c}}on, Vincent Feuillard, and Anne
  Sabourin.
\newblock A multivariate extreme value theory approach to anomaly clustering
  and visualization.
\newblock \emph{Computational Statistics}, 35\penalty0 (2):\penalty0 607--628,
  2020.

\bibitem[Coles et~al.(1999)Coles, Heffernan, and Tawn]{coles1999dependence}
Stuart Coles, Janet Heffernan, and Jonathan Tawn.
\newblock Dependence measures for extreme value analyses.
\newblock \emph{Extremes}, 2:\penalty0 339--365, 1999.

\bibitem[Cooley and Thibaud(2019)]{cooley2019decompositions}
Daniel Cooley and Emeric Thibaud.
\newblock Decompositions of dependence for high-dimensional extremes.
\newblock \emph{Biometrika}, 106\penalty0 (3):\penalty0 587--604, 2019.

\bibitem[Cooley et~al.(2010)Cooley, Davis, and Naveau]{cooley2010pairwise}
Daniel Cooley, Richard~A Davis, and Philippe Naveau.
\newblock The pairwise beta distribution: A flexible parametric multivariate
  model for extremes.
\newblock \emph{Journal of Multivariate Analysis}, 101\penalty0 (9):\penalty0
  2103--2117, 2010.

\bibitem[Davison and Smith(1990)]{davison1990models}
Anthony~C Davison and Richard~L Smith.
\newblock Models for exceedances over high thresholds.
\newblock \emph{Journal of the Royal Statistical Society: Series B (Statistical
  Methodology)}, 52\penalty0 (3):\penalty0 393--425, 1990.

\bibitem[de~Haan and Resnick(1977)]{de1977limit}
Laurens de~Haan and Sidney~I Resnick.
\newblock Limit theory for multivariate sample extremes.
\newblock \emph{Zeitschrift f{\"u}r Wahrscheinlichkeitstheorie und verwandte
  Gebiete}, 40\penalty0 (4):\penalty0 317--337, 1977.

\bibitem[Dombry et~al.(2016)Dombry, Engelke, and Oesting]{dombry2016exact}
Cl{\'e}ment Dombry, Sebastian Engelke, and Marco Oesting.
\newblock Exact simulation of max-stable processes.
\newblock \emph{Biometrika}, 103\penalty0 (2):\penalty0 303--317, 2016.

\bibitem[Drees and Huang(1998)]{dh:1998}
Holger Drees and Xin Huang.
\newblock Best attainable rates of convergence for estimators of the stable
  tail dependence function.
\newblock \emph{Journal of Multivariate Analysis}, 64\penalty0 (1):\penalty0
  25--46, 1998.

\bibitem[Drees and Sabourin(2021)]{drees2021principal}
Holger Drees and Anne Sabourin.
\newblock Principal component analysis for multivariate extremes.
\newblock \emph{Electronic Journal of Statistics}, 15:\penalty0 908--943, 2021.

\bibitem[Einmahl et~al.(2012)Einmahl, Krajina, and Segers]{Ei12}
John~HJ Einmahl, Andrea Krajina, and Johan Segers.
\newblock An {M}-estimator for tail dependence in arbitrary dimensions.
\newblock \emph{The Annals of Statistics}, 40\penalty0 (3):\penalty0
  1764--1793, 2012.

\bibitem[Engelke and Hitz(2020)]{engelke2020graphical}
Sebastian Engelke and Adrien~S Hitz.
\newblock Graphical models for extremes.
\newblock \emph{Journal of the Royal Statistical Society Series B: Statistical
  Methodology}, 82\penalty0 (4):\penalty0 871--932, 2020.

\bibitem[Engelke and Ivanovs(2021)]{engelke2021sparse}
Sebastian Engelke and Jevgenijs Ivanovs.
\newblock Sparse structures for multivariate extremes.
\newblock \emph{Annual Review of Statistics and Its Application}, 8:\penalty0
  241--270, 2021.

\bibitem[Engelke et~al.(2022)Engelke, Ivanovs, and
  Strokorb]{engelke2022graphical}
Sebastian Engelke, Jevgenijs Ivanovs, and Kirstin Strokorb.
\newblock Graphical models for infinite measures with applications to extremes
  and {Lévy} processes.
\newblock Available at \url{https://arxiv.org/abs/2211.15769}, 2022.

\bibitem[Falk(2019)]{MF19}
Michael Falk.
\newblock \emph{Multivariate extreme value theory and {D}-norms}.
\newblock Springer Series in Operations Research and Financial Engineering.
  Springer, Cham, 2019.

\bibitem[Fomichov and Ivanovs(2023)]{fomichov2023spherical}
V~Fomichov and J~Ivanovs.
\newblock Spherical clustering in detection of groups of concomitant extremes.
\newblock \emph{Biometrika}, 110\penalty0 (1):\penalty0 135--153, 2023.

\bibitem[Goix et~al.(2016)Goix, Sabourin, and Cl{\'e}men{\c{c}}on]{Go17}
Nicolas Goix, Anne Sabourin, and St{\'e}phan Cl{\'e}men{\c{c}}on.
\newblock Sparse representation of multivariate extremes with applications to
  anomaly ranking.
\newblock In \emph{Artificial Intelligence and Statistics}, pages 75--83. PMLR,
  2016.

\bibitem[Goix et~al.(2017)Goix, Sabourin, and Clémençon]{goix2017JMVA}
Nicolas Goix, Anne Sabourin, and Stephan Clémençon.
\newblock Sparse representation of multivariate extremes with applications to
  anomaly detection.
\newblock \emph{Journal of Multivariate Analysis}, 161:\penalty0 12--31, 2017.

\bibitem[Hentschel et~al.(2024)Hentschel, Engelke, and
  Segers]{hentschel2022statistical}
Manuel Hentschel, Sebastian Engelke, and Johan Segers.
\newblock Statistical inference for {H{\"u}sler-Reiss} graphical models through
  matrix completions.
\newblock \emph{Journal of the American Statistical Association}, \penalty0
  (just-accepted):\penalty0 1--25, 2024.

\bibitem[Ho and Dombry(2019)]{ho2019simple}
Zhen Wai~Olivier Ho and Cl{\'e}ment Dombry.
\newblock Simple models for multivariate regular variation and the
  {H{\"u}sler--Rei{\ss}} {Pareto} distribution.
\newblock \emph{Journal of Multivariate Analysis}, 173:\penalty0 525--550,
  2019.

\bibitem[Huser and Davison(2013)]{huser2013composite}
Rapha{\"e}l Huser and Anthony~C Davison.
\newblock Composite likelihood estimation for the {Brown--Resnick} process.
\newblock \emph{Biometrika}, 100\penalty0 (2):\penalty0 511--518, 2013.

\bibitem[H{\"u}sler and Reiss(1989)]{hueslerReiss1989}
Jürg H{\"u}sler and Rolf-Dieter Reiss.
\newblock Maxima of normal random vectors: Between independence and complete
  dependence.
\newblock \emph{Statistics and Probability Letters}, 7\penalty0 (4):\penalty0
  283--286, 1989.

\bibitem[Jalalzai and Leluc(2021)]{jalalzai2021feature}
Hamid Jalalzai and R{\'e}mi Leluc.
\newblock Feature clustering for support identification in extreme regions.
\newblock In \emph{International Conference on Machine Learning}, pages
  4733--4743. PMLR, 2021.

\bibitem[Jan{\ss}en and Wan(2020)]{janssen2020k}
Anja Jan{\ss}en and Phyllis Wan.
\newblock K-means clustering of extremes.
\newblock \emph{Electronic Journal of Statistics}, 14:\penalty0 1211--1233,
  2020.

\bibitem[Kiriliouk et~al.(2018)Kiriliouk, Rootz{\'e}n, Segers, and
  Wadsworth]{A16}
Anna Kiriliouk, Holger Rootz{\'e}n, Johan Segers, and Jennifer~L Wadsworth.
\newblock Peaks over thresholds modeling with multivariate generalized {P}areto
  distributions.
\newblock \emph{Technometrics}, 61\penalty0 (1):\penalty0 123--135, 2018.

\bibitem[Medina et~al.(2024)Medina, Davis, and Samorodnitsky]{medina2021}
Marco~Avella Medina, Richard~A Davis, and Gennady Samorodnitsky.
\newblock Spectral learning of multivariate extremes.
\newblock \emph{Journal of Machine Learning Research}, 25\penalty0
  (124):\penalty0 1--36, 2024.

\bibitem[Meyer and Wintenberger(2021)]{meyer2021sparse}
Nicolas Meyer and Olivier Wintenberger.
\newblock Sparse regular variation.
\newblock \emph{Advances in Applied Probability}, 53\penalty0 (4):\penalty0
  1115--1148, 2021.

\bibitem[Meyer and Wintenberger(2023)]{meyer2020multivariate}
Nicolas Meyer and Olivier Wintenberger.
\newblock Multivariate sparse clustering for extremes.
\newblock \emph{Journal of the American Statistical Association}, \penalty0
  (forthcoming):\penalty0 1--23, 2023.

\bibitem[Opitz(2013)]{opitz2013extremal}
Thomas Opitz.
\newblock Extremal t processes: Elliptical domain of attraction and a spectral
  representation.
\newblock \emph{Journal of Multivariate Analysis}, 122:\penalty0 409--413,
  2013.

\bibitem[Rootz{\'e}n and Tajvidi(2006)]{TR06}
Holger Rootz{\'e}n and Nader Tajvidi.
\newblock Multivariate generalized {P}areto distributions.
\newblock \emph{Bernoulli}, 12\penalty0 (5):\penalty0 917--930, 2006.

\bibitem[Rootz{\'e}n et~al.(2018{\natexlab{a}})Rootz{\'e}n, Segers, and
  L~Wadsworth]{R18nr2}
Holger Rootz{\'e}n, Johan Segers, and Jennifer L~Wadsworth.
\newblock Multivariate peaks over thresholds models.
\newblock \emph{Extremes}, 21\penalty0 (1):\penalty0 115--145,
  2018{\natexlab{a}}.

\bibitem[Rootz{\'e}n et~al.(2018{\natexlab{b}})Rootz{\'e}n, Segers, and
  Wadsworth]{R18}
Holger Rootz{\'e}n, Johan Segers, and Jennifer~L Wadsworth.
\newblock Multivariate generalized {P}areto distributions: Parametrizations,
  representations, and properties.
\newblock \emph{Journal of Multivariate Analysis}, 165:\penalty0 117--131,
  2018{\natexlab{b}}.

\bibitem[Schlather and Tawn(2002)]{schlather2002inequalities}
Martin Schlather and Jonathan Tawn.
\newblock Inequalities for the extremal coefficients of multivariate extreme
  value distributions.
\newblock \emph{Extremes}, 5:\penalty0 87--102, 2002.

\bibitem[Segers(2020)]{segers2020discussion}
Johan Segers.
\newblock Discussion on ‘{G}raphical models for extremes’ by {S}ebastian
  {E}ngelke and {A}drien {H}itz.
\newblock \emph{Journal of the Royal Statistical Society: Series B (Statistical
  Methodology)}, 82\penalty0 (4):\penalty0 926, 2020.

\bibitem[Simpson et~al.(2020)Simpson, Wadsworth, and
  Tawn]{simpson2020determining}
Emma~S Simpson, Jennifer~L Wadsworth, and Jonathan~A Tawn.
\newblock Determining the dependence structure of multivariate extremes.
\newblock \emph{Biometrika}, 107\penalty0 (3):\penalty0 513--532, 2020.

\bibitem[Stephenson(2003)]{S03}
Alec Stephenson.
\newblock Simulating multivariate extreme value distributions of logistic type.
\newblock \emph{Extremes}, 6\penalty0 (1):\penalty0 49--59, 2003.

\bibitem[Tawn(1990)]{T1990}
Jonathan~A Tawn.
\newblock Modelling multivariate extreme value distributions.
\newblock \emph{Biometrika}, 77\penalty0 (2):\penalty0 245--253, 1990.

\bibitem[Wadsworth and Tawn(2014)]{wadsworth2014efficient}
Jennifer~L Wadsworth and Jonathan~A Tawn.
\newblock Efficient inference for spatial extreme value processes associated to
  log-gaussian random functions.
\newblock \emph{Biometrika}, 101\penalty0 (1):\penalty0 1--15, 2014.

\bibitem[Wang and Stoev(2011)]{wang2011conditional}
Yizao Wang and Stilian~A Stoev.
\newblock Conditional sampling for spectrally discrete max-stable random
  fields.
\newblock \emph{Advances in Applied Probability}, 43\penalty0 (2):\penalty0
  461--483, 2011.

\end{thebibliography}

\end{document}